\documentclass[11pt, twoside]{article}

\usepackage[english]{babel}

\usepackage{bbm}
\usepackage{amssymb}
\usepackage{amsfonts}
\usepackage{amsmath}
\usepackage{amsthm}
\usepackage{color}
\usepackage{mathrsfs}
\usepackage{txfonts}
\usepackage{bbm}
\usepackage{enumerate}
\usepackage{anysize}
\usepackage{indentfirst}

\usepackage{enumitem}

\newlist{Plist}{enumerate}{1}
\setlist[Plist]{
  label=\textup{(P\arabic*)},
  ref=\textup{(P\arabic*)},
  leftmargin=*,
  labelsep=0.5em
}

\newlist{Romanlist}{enumerate}{1}
\setlist[Romanlist]{
  label=\textup{(\Roman*)},
  ref=\textup{(\Roman*)},
  leftmargin=*,
  labelsep=0.5em
}

\usepackage[T1]{fontenc}
\usepackage[utf8]{inputenc}

\usepackage{latexsym}

\usepackage[colorlinks=true,
  linkcolor=blue,
  citecolor=red,
  urlcolor=magenta,
  ]{hyperref}

\allowdisplaybreaks

\newtheorem{theorem}{Theorem}[section]
\newtheorem{lemma}[theorem]{Lemma}
\newtheorem{corollary}[theorem]{Corollary}

\theoremstyle{definition}
\newtheorem{remark}[theorem]{Remark}

\newcounter{assum}

\newtheorem{question}{Question}

\renewcommand{\appendix}{\par
\setcounter{section}{0}%
\setcounter{subsection}{0}%
\setcounter{subsubsection}{0}%
\gdef\thesection{\@Alph\c@section}%
\gdef\thesubsection{\@Alph\c@section.\@arabic\c@subsection}%
\gdef\theHsection{\@Alph\c@section.}%
\gdef\theHsubsection{\@Alph\c@section.\@arabic\c@subsection}%
\csname appendixmore\endcsname
}

\numberwithin{equation}{section}

\begin{document}

\arraycolsep=1pt

\title{\bf\Large Parabolic BMO Spaces,
Muckenhoupt Weights, and
Reverse H\"older Classes with Time Lag:
Equivalence and Characterizations
\footnotetext{\hspace{-0.35cm} 2020
\emph{Mathematics Subject Classification}.
Primary 42B35; Secondary 42B37, 46E30.
\endgraf \emph{Key words and phrases}.
parabolic BMO space,
parabolic Muckenhoupt class,
parabolic reverse H\"older class.
\endgraf
This project is partially supported by the National Natural
Science Foundation of China (Grant Nos. 12431006 and 12371093), the Beijing
Natural Science Foundation (Grant No.1262011),
and the Fundamental Research Funds for the Central Universities
(Grant No. 2233300008).}}
\author{Weiyi Kong,
Dachun Yang and Wen Yuan}
\maketitle

\vspace{-0.8cm}

\begin{center}
\begin{minipage}{13cm}
{\small {\bf Abstract}\quad For any given time lag
$\gamma\in(0,1)$, we prove that the one-sided parabolic BMO space
$\mathrm{BMO}^+(\gamma)$ coincides with
the parabolic BMO space $\mathrm{PBMO}^-(\gamma)$ with equivalent norms,
the parabolic Muckenhoupt class $A_{\infty}^+(\gamma)$ defined via the
reverse Jensen inequality can be represented as the union of the parabolic
Muckenhoupt classes $A_r^+(\gamma)$ with $r\in[1,\infty)$, and
the parabolic reverse H\"older classes $\bigcup_{q\in(1,\infty]}
RH_q^+$ coincide with the parabolic Muckenhoupt classes
$\bigcup_{r\in[1,\infty)}A_r^+(\gamma)$, and hence give affirmative
answers to Questions 4.5 and 4.6 posed by Kinnunen and Saari
[Nonlinear Anal. 131 (2016)]. To show them, we establish the uniform
parabolic space-time shifting property for parabolic reverse H\"older
weights, and develop the one-sided stopping time argument
which yields a new parabolic John--Nirenberg inequality for
$\mathrm{BMO}^+(\gamma)$. As applications, we obtain John--Nirenberg and
exponential integrability characterizations of $\mathrm{BMO}^+(\gamma)$, prove
that $\mathrm{BMO}^+(\gamma)$ is independent of the positive time lag,
and identify its null space.
}
\end{minipage}
\end{center}

\vspace{0.2cm}


\vspace{0.2cm}

\section{Introduction}

Throughout this article, we work in $\mathbb{R}^{n+1}$ and,
unless otherwise specified, we take $\mathbb{R}^{n+1}$ as the default
underlying space. The space of functions with bounded mean oscillation, BMO,
plays a central role in harmonic analysis and PDEs (see, for example,
\cite{bd(cvpde-2017), bhh(jfa-2024), fs(acta-1972), h(rm-2016), ins(jia-2019),
jowz(ma-2025), jwyy(tams-2017), jn(cpam-1961), n(sm-1997), ns(jfa-2012)}).
Recall that the \emph{space} BMO is defined to be the set of all locally
integrable functions $f$ on $\mathbb{R}^{n}$
such that
$$\|f\|_{\mathop{\mathrm{BMO}}(\mathbb{R}^{n})}:=\sup_Q \frac{1}{|Q|}
\int_Q\left|f(x)-\frac{1}{|Q|}\int_Q f(y)\,dy\right|\,dx<\infty,$$
where the supremum is taken over all cubes $Q$ in $\mathbb{R}^{n}$
with edges parallel to coordinate axes. In the parabolic setting, parabolic
BMO spaces with time lag naturally arise in the study of the following
doubly nonlinear parabolic equation
on $\mathbb{R}^{n+1}$
\begin{align}\label{20260811.1443}
\frac{\partial}{\partial t}\left(|u|^{p-2}u\right)
-\mathrm{div}\left(|\nabla u|^{p-2}\nabla u\right)=0,
\end{align}
where $p\in(1,\infty)$ is \emph{always fixed} throughout this article
(see, for example, \cite{fg(pams-1985), kk(ma-2007), m(cpam-1964),
m(cpam-1964)2, t(cpam-1968)}). If $p=2$, then \eqref{20260811.1443} reduces to
the standard heat equation. Note that, if $u(x,t)$ is a solution to
\eqref{20260811.1443}, then, for any $\lambda\in(0,\infty)$, $u(\lambda x,
\lambda^p t)$ remains its solution. Due to this fundamental scaling behavior of
\eqref{20260811.1443}, the intrinsic geometry associated with
\eqref{20260811.1443} requires a non-standard temporal scaling of order
$p$. Thus, in the estimates concerning \eqref{20260811.1443}, classical Euclidean
cubes are naturally replaced by space-time parabolic rectangles. To formalize
the parabolic rectangle, let $(x,t):=(x_1,\dots,x_n,t)\in\mathbb{R}^{n+1}$ and
$L\in(0,\infty)$. A \emph{parabolic rectangle} $R(x,t,L)$ centered at $(x,t)$
with edge length $l(R):=L$ is defined by setting
\begin{align*}
R:=R(x,t,L):=Q(x,L)\times\left[t-L^p,t+L^p\right),
\end{align*}
where $Q(x,L):=\{y:=(y_1,\dots,y_n)\in\mathbb{R}^n:x_i-\frac{L}{2}\leq
y_i<x_i+\frac{L}{2}\mbox{\ for\ any\ }i\in\mathbb{N}\cap[1,n]\}$ is the
\emph{cube} in $\mathbb{R}^n$ centered at $x$ with edge length $L$. In
addition, for any given $\gamma\in[0,1)$, define $R^-(\gamma)$, $R^+(\gamma)$,
and $R^{++}(\gamma)$, respectively, by setting
\begin{align*}
R^-(\gamma):=Q(x,L)\times\left[t-L^p,t-\gamma L^p\right),\ \
R^+(\gamma):=Q(x,L)\times\left[t+\gamma L^p,t+L^p\right),
\end{align*}
and $R^{++}(\gamma):=Q(x,L)\times[t+(1+2\gamma)L^p,t+(2+\gamma)L^p)$,
where $\gamma$ is called the \emph{time lag}. Denote by $\mathcal{R}$ the set
of all parabolic rectangles in $\mathbb{R}^{n+1}$. Furthermore, the time lag
naturally appears in the parabolic Harnack estimate corresponding to
\eqref{20260811.1443}, which asserts that, for any given time lag
$\gamma\in(0,1)$, there exists a positive constant $C$ such that, for any
positive weak solution $u$ to \eqref{20260811.1443} and for any
$R\in\mathcal{R}$,
\begin{align}\label{20260811.1541}
\sup_{(x,t)\in R^-(\gamma)}u(x,t)
\leq C\inf_{(y,s)\in R^+(\gamma)}u(y,s).
\end{align}
Indeed, let $u$ be a positive weak solution to \eqref{20260811.1443}. Via a
parabolic John--Nirenberg inequality for $\ln u$, one can show that $\ln u$
belongs to the parabolic BMO space with time lag $\gamma$. Combining this and
the Moser iteration argument, one then obtains \eqref{20260811.1541} (see, for
example, \cite{a(tams-1988), kk(ma-2007)}).

Beyond its PDE origins, the parabolic BMO space with time lag also serves as
the higher-dimensional parabolic counterpart of the one-dimensional one-sided
BMO space. Recall that, on the real line $\mathbb{R}$, the \emph{one-sided
$\mathrm{BMO}$ space $\mathrm{BMO}^+(\mathbb{R})$}, introduced by
Mart\'in-Reyes and de la Torre \cite{mt(jlms-1994)}, is defined to be the set
of all locally integrable functions $f$ on $\mathbb{R}$ such that
\begin{align*}
\|f\|_{\mathrm{BMO}^+(\mathbb{R})}:=\sup_{I}\frac{1}{|I^-|}
\int_{I^-}\left[f(x)-\frac{1}{I^+}\int_{|I^+|}f(y)\,dy\right]_+\,dx<\infty,
\end{align*}
where the supremum is taken over all finite intervals
$I:=[a,b]\subset\mathbb{R}$,
$I^-:=[a,\frac{a+b}{2}]$, and $I^+:=[\frac{a+b}{2},b]$. Using the one-sided
John--Nirenberg inequality for functions in $\mathrm{BMO}^+(\mathbb{R})$, they
proved in \cite[Theorem 2]{mt(jlms-1994)} that $f\in\mathrm{BMO}^+(\mathbb{R})$
if and only if
\begin{align*}
|||f|||_{\mathrm{BMO}^+(\mathbb{R})}
:=\sup_{I}\inf_{c\in\mathbb{R}}\left\{\frac{1}{|I^-|}\int_{I^-}[f(x)-c]_+\,dx
+\frac{1}{|I^+|}\int_{I^+}[f(x)-c]_-\,dx\right\}<\infty,
\end{align*}
where the supremum is taken over all finite intervals $I\subset\mathbb{R}$.
Moreover, the two $\mathrm{BMO}^+(\mathbb{R})$-norms
$\|\cdot\|_{\mathrm{BMO}^+(\mathbb{R})}$ and
$|||\cdot|||_{\mathrm{BMO}^+(\mathbb{R})}$ are equivalent.
From now on, we work in $\mathbb{R}^{n+1}$. In the parabolic
setting, for any given time lag $\gamma\in[0,1)$, the \emph{parabolic
$\mathrm{BMO}$ space $\mathrm{PBMO}^-(\gamma)$ with time lag $\gamma$} is
defined to be the set of all locally integrable functions $f$ on
$\mathbb{R}^{n+1}$ such that
\begin{align*}
\|f\|_{\mathrm{PBMO}^-(\gamma)}:=\sup_{R\in\mathcal{R}}\inf_{c\in\mathbb{R}}
\left[\fint_{R^-(\gamma)}(f-c)_++\fint_{R^+(\gamma)}(f-c)_-\right]<\infty.
\end{align*}
If the condition above holds with the time axis reversed, then
$f\in\mathrm{PBMO}^+(\gamma)$ (the other \emph{parabolic $\mathrm{BMO}$ space
with time lag $\gamma$}). Here, and thereafter, when no confusion can arise,
we omit the differential $dx\,dt$ from integrals. Denote
by $L_\mathrm{loc}^1$ the set of all locally integrable functions on
$\mathbb{R}^{n+1}$ and, for any $f\in L_\mathrm{loc}^1$ and any measurable subset
$A\subset\mathbb{R}^{n+1}$ with $|A|\in(0,\infty)$, let
\begin{align*}
f_A:=\fint_Af:=\frac{1}{|A|}\int_Af,
\end{align*}
$f_+:=\max\{f,\,0\}$, and $f_-:=-\min\{f,\,0\}$. It is worth pointing out that,
from the perspective of function spaces, $\mathrm{PBMO}^-(\gamma)$ is clearly a
natural higher-dimensional parabolic counterpart of the space
$\mathrm{BMO}^+(\mathbb{R})$ equipped with the equivalent norm.

Inspired by the equivalence between $\|\cdot\|_{\mathrm{BMO}^+(\mathbb{R})}$
and $|||\cdot|||_{\mathrm{BMO}^+(\mathbb{R})}$ in the one-dimensional setting,
as well as the intrinsic connections between parabolic Muckenhoupt weights and
parabolic BMO spaces with time lag, Kinnunen and Saari \cite{ks(na-2016)}
introduced the \emph{one-sided parabolic $\mathrm{BMO}$ spaces
$\mathrm{BMO}^+(\gamma)$} and \emph{$\mathrm{BMO}^-(\gamma)$ with time lag
$\gamma\in[0,1)$}, which are defined, respectively, to be the set of all $f\in
L_\mathrm{loc}^1$ such that
\begin{align*}
\|f\|_{\mathrm{BMO}^+(\gamma)}:=\sup_{R\in\mathcal{R}}\fint_{R^-(\gamma)}
\left[f-f_{R^+(\gamma)}\right]_+<\infty
\end{align*}
and
\begin{align*}
\|f\|_{\mathrm{BMO}^-(\gamma)}:=\sup_{R\in\mathcal{R}}\fint_{R^+(\gamma)}
\left[f-f_{R^-(\gamma)}\right]_+<\infty.
\end{align*}
While it was shown in \cite[Proposition 4.4]{ks(na-2016)} that
\begin{align*}
\mathrm{PBMO}^-(\gamma)=\mathrm{BMO}^+(\gamma)
\cap\left[-\mathrm{BMO}^-(\gamma)\right],
\end{align*}
the fundamental question regarding the complete equivalence of these spaces
remained open. Specifically, Kinnunen and Saari
\cite[Question 4.5]{ks(na-2016)} posed the following question:
\begin{question}\label{question A}
Let $\gamma\in(0,1)$. Is it true that $\mathrm{BMO}^+(\gamma)
=-\mathrm{BMO}^-(\gamma)=\mathrm{PBMO}^-(\gamma)$?
\end{question}

Parallel to parabolic BMO spaces, the parabolic Muckenhoupt weights with time
lag introduced by Kinnunen and Saari \cite{ks(na-2016), ks(apde-2016)} and
Kinnunen and Myyryl\"ainen \cite{km(am-2024)} have also been extensively
studied. In what follows, by a \emph{weight} $w$, we mean a nonnegative locally integrable
function on $\mathbb{R}^{n+1}$ which is positive almost everywhere. Let
$r\in[1,\infty]$ and $\gamma\in[0,1)$. The \emph{parabolic Muckenhoupt class
$A_r^+(\gamma)$ with time lag $\gamma$} is defined to be the set of all
weights $w$ on $\mathbb{R}^{n+1}$ such that
\begin{align*}
[w]_{A_r^+(\gamma)}:=
\begin{cases}
\displaystyle
\sup_{R\in\mathcal{R}}\fint_{R^-(\gamma)}w
\left[\mathop\mathrm{ess\,inf}_{R^+(\gamma)}w\right]^{-1}<\infty
&\mathrm{if}\ r=1,\\
\displaystyle
\sup_{R\in\mathcal{R}}\fint_{R^-(\gamma)}w
\left[\fint_{R^+(\gamma)}w^\frac{1}{1-r}\right]^{r-1}<\infty
&\mathrm{if}\ r\in(1,\infty),\\
\displaystyle
\sup_{R\in\mathcal{R}}\fint_{R^-(\gamma)}w
\exp\left\{\fint_{R^+(\gamma)}\ln\frac1w\right\}<\infty
&\mathrm{if}\ r=\infty.\\
\end{cases}
\end{align*}
If the condition above holds with the time axis reversed, then $w\in
A_r^-(\gamma)$ (the other \emph{parabolic Muckenhoupt class with time lag}).
It is well known that, in the classical Muckenhoupt weight theory, as well as
in the one-dimensional one-sided weight theory of Mart\'in-Reyes et al.
\cite{mpt(cjm-1993)}, the $A_\infty$ class coincides with the union of
all $A_r$ classes with $r\in[1,\infty)$. However, in the parabolic setting,
whether $A_\infty^+(\gamma)$ coincides with the union of all $A_r^+(\gamma)$
classes with $r\in[1,\infty)$ remains a fundamental open problem. Specifically,
Kinnunen and Saari \cite[Question 4.5]{ks(na-2016)}
also raised the following question:
\begin{question}\label{question B}
Let $\gamma\in(0,1)$. Is it true that $A_\infty^+(\gamma)
=\bigcup_{r\in[1,\infty)}A_r^+(\gamma)$?
\end{question}
Although this identity remains open, the structure and some interesting
characterizations of $\bigcup_{r\in[1,\infty)}A_r^+(\gamma)$ have recently been
extensively investigated by Kinnunen and Myyry\"ainen \cite{km(am-2024)}. We
refer to \cite{kmy(ma-2023), kyy(2602.09741), kyyz(cvpde-2025), mos(2509.24486),
my(mz-2024),s(rmi-2016), s(ampa-2018)} for more recent studies on
$\mathrm{PBMO}^-(\gamma)$ and to \cite{ckyyz(jga-2026), cm(rmc-2026),
km(jam-2025), kmyz(pa-2023), kyyz(cjm-2025), lmv(2604.12561), mhy(fm-2023)} for
more recent studies on the parabolic Muckenhoupt class with time lag.

A third fundamental notion is the parabolic reverse H\"older class introduced
by Kinnunen and Saari \cite{ks(na-2016), ks(apde-2016)} and
Kinnunen and Myyryl\"ainen \cite{km(jam-2025)}. Let $q\in(1,\infty]$ and
$\gamma\in[0,1)$. The \emph{parabolic reverse H\"older class $RH_q^+(\gamma)$
with time lag} is defined to be the set of all weights $w$ on $\mathbb{R}^{n+1}$ such that
\begin{align*}
[w]_{RH_q^+(\gamma)}:=
\begin{cases}
\displaystyle
\sup_{R\in\mathcal{R}}\left[\fint_{R^-(\gamma)}w^q\right]^\frac1q
\left[\fint_{R^+(\gamma)}w\right]^{-1}<\infty
&\mathrm{if}\ q\in(1,\infty),\\
\displaystyle
\sup_{R\in\mathcal{R}}\left[\mathop\mathrm{ess\,sup}_{R^-(\gamma)}w\right]
\left[\fint_{R^+(\gamma)}w\right]^{-1}<\infty
&\mathrm{if}\ q=\infty.
\end{cases}
\end{align*}
If the condition above holds with the time axis reversed, then $w\in
RH_q^-(\gamma)$ (the other \emph{parabolic reverse H\"older class with time
lag}). For simplicity, we abbreviate $RH_q^\pm(0)$ as $RH_q^\pm$. It is well
known that, in both the classical Euclidean setting and the one-dimensional
one-sided theory of Cruz-Uribe et al. \cite{cno(sm-1995)}, the reverse Hölder
condition is equivalent to the Muckenhoupt condition. In the parabolic setting,
it was proved by Kinnunen and Saari \cite[Theorem 5.2]{ks(apde-2016)} and
Kinnunen and Myyryl\"ainen \cite[Theorem 5.2]{km(am-2024)} that, for any given
$\gamma\in(0,1)$, the inclusion $\bigcup_{r\in[1,\infty)}A_r^+(\gamma)\subset
\bigcup_{q\in(1,\infty]}RH_q^+$ holds. However, whether the reverse inclusion
holds remained an open problem. Specifically, Kinnunen and Saari \cite[Question
4.6]{ks(na-2016)} posed the following question:
\begin{question}\label{question C}
Let $\gamma\in(0,1)$. Is it true that
$\bigcup_{q\in(1,\infty]}RH_q^+=\bigcup_{q\in[1,\infty)}A_q^+(\gamma)$?
\end{question}
Recently, partial progress toward Question \ref{question C} has been made by
Kinnunen and Myyryl\"ainen \cite[Theorem 5.3]{km(jam-2025)}. They showed that,
if $w\in RH_q^+$ for some $q\in(1,\infty)$ satisfies the following
additional \emph{parabolic forward in time doubling condition}: given
$\gamma\in(0,1)$, there exists a positive constant
$C_d:=C_d(n,p,q,\gamma,[w]_{RH_q^+})$ such that, for any
$R=(x,t,L)\in\mathcal{R}$ with $(x,t)\in\mathbb{R}^{n+1}$ and $L\in(0,\infty)$,
\begin{align}\label{20260730.1813}
w\left(R^-(\gamma)\right)\leq C_dw\left(\frac12R^+(\gamma)\right),
\end{align}
then $w\in A_r^+(\gamma)$ for some $r\in[1,\infty)$. Here, and thereafter,
\begin{align*}
\frac12R^+(\gamma):=Q\left(x,\frac{L}{2}\right)\times
\left[t+\frac{1+\gamma}{2}L^p-\frac{1-\gamma}{2}\frac{L^p}{2^p},
t+\frac{1+\gamma}{2}L^p+\frac{1-\gamma}{2}\frac{L^p}{2^p}\right).
\end{align*}

In this article, we give affirmative answers to Questions
\ref{question A}--\ref{question C}. Specifically,
we first asnwer Question \ref{question C} as follows.

\begin{theorem}\label{question 4.6}
Let $\gamma\in(0,1)$. Then
\begin{align*}
\bigcup_{q\in(1,\infty]}RH_q^+=\bigcup_{r\in[1,\infty)}A_r^+(\gamma).
\end{align*}
More precisely, if $w\in A_r^+(\gamma)$ for some $r\in[1,\infty)$, then there
exists a positive constant $q:=q(n,p,r,\gamma,[w]_{A_r^+(\gamma)})
\in(1,\infty]$ such that $w\in RH_q^+$. Conversely, if $w\in RH_q^+$ for some
$q\in(1,\infty]$, then there exists a positive constant
$r:=r(n,p,q,\gamma,[w]_{RH_q^+})\in[1,\infty)$ such that $w\in A_r^+(\gamma)$.
\end{theorem}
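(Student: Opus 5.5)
The inclusion $\bigcup_{r\in[1,\infty)}A_r^+(\gamma)\subset\bigcup_{q\in(1,\infty]}RH_q^+$ is already known: Kinnunen and Saari \cite[Theorem 5.2]{ks(apde-2016)} and Kinnunen and Myyryl\"ainen \cite[Theorem 5.2]{km(am-2024)} prove it, with $q$ depending only on $n,p,r,\gamma,[w]_{A_r^+(\gamma)}$. I would quote this and concentrate on the converse. Since $RH_\infty^+\subset RH_q^+$ for every finite $q$, with $[w]_{RH_q^+}\le[w]_{RH_\infty^+}$, we may assume $q\in(1,\infty)$. By \cite[Theorem 5.3]{km(jam-2025)}, the converse then reduces to one claim: every $w\in RH_q^+$ satisfies the forward in time doubling condition \eqref{20260730.1813} for the given $\gamma\in(0,1)$, with $C_d$ depending only on $n,p,q,\gamma,[w]_{RH_q^+}$.

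\emph{Step 1 (consequences of $RH_q^+$).} By H\"older's inequality, $\fint_{P^-(0)}w\le[w]_{RH_q^+}\fint_{P^+(0)}w$ for every $P\in\mathcal R$, which is forward doubling with zero lag. Since $P^-(0)$ and $P^+(0)$ have the same measure, this reads $w(P^-(0))\le[w]_{RH_q^+}\,w(P^+(0))$. The reverse H\"older inequality also gives a non-concentration estimate: for any measurable $E\subset P^-(0)$,
\begin{align*}
\frac{w(E)}{|P^-(0)|}\le[w]_{RH_q^+}\left(\frac{|E|}{|P^-(0)|}\right)^{1/q'}\fint_{P^+(0)}w .
\end{align*}
Iterating the zero-lag doubling along a chain of rectangles with a fixed spatial cube moves mass forward in time. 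This costs a factor $[w]_{RH_q^+}$ per step, and the number of steps depends only on $p$, $\gamma$ and the ratio of scales.

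\emph{Step 2 (uniform space-time shifting).} This is the core of the argument. I would prove the following lemma: there is a constant $C$, depending only on $n,p,q,\gamma,[w]_{RH_q^+}$, such that $w(R^-(\gamma))\le C\,w(S)$ for every $R\in\mathcal R$ and every parabolic rectangle $S$ that lies at least $\gamma l(R)^p$ later in time, has spatial cube inside $Q(x,L)$, and has size comparable to $R$. Specializing $S=\frac12R^+(\gamma)$ gives \eqref{20260730.1813}.

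Moving mass forward in time is handled by Step 1. Moving mass \emph{sideways} and into a \emph{smaller} cube needs more. First embed $R^-(\gamma)$ in the lower half of an enlarged rectangle $\widetilde R$ whose upper half $\widetilde R^+(0)$ lies before $S$. Step 1 gives $w(R^-(\gamma))\lesssim w(\widetilde R^+(0))$. It remains to bound $w(\widetilde R^+(0))$ by $w(S)$, which is a lower bound on the mass of a small subset. This is the step I expect to be the main obstacle.

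\emph{The hard step.} My first attempt would be a one-sided stopping-time/covering argument. Partition $\widetilde R^+(0)$ into $N$ congruent small parabolic sub-rectangles $P_i$, and shift each $P_i$ forward in time by a fixed amount so that all the shifted copies pile up over $S$. Then apply the non-concentration estimate of Step 1 at each generation. If $w(S)$ were much smaller than $w(\widetilde R^+(0))$, the mass would have to concentrate on a set of small relative measure in a later upper half. The factor $N^{-1/q'}$ would then beat the accumulated doubling constants, provided $N$ is chosen large in terms of $q$ and $[w]_{RH_q^+}$; this would give a contradiction. To make the argument close, the geometric parameters must be fixed so that the number of generations depends only on $n,p,\gamma$. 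That uniformity is what makes the shifting constant independent of $R$.

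Once the lemma is in hand, \eqref{20260730.1813} holds with $C_d=C$. Then \cite[Theorem 5.3]{km(jam-2025)} gives $w\in A_r^+(\gamma)$ for some $r=r(n,p,q,\gamma,[w]_{RH_q^+})\in[1,\infty)$, which completes the proof of the theorem.
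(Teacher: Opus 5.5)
Your reduction matches the paper's: quote $A_r^+(\gamma)\subset RH_q^+$, then reduce the converse, via Kinnunen--Myyryl\"ainen, to the forward in time doubling condition \eqref{20260730.1813}. Step 1 is also fine. The gap is the step you flag yourself: bounding a mass spread over all of $Q(x,L)$ (or a larger cube) by the mass of $S$, whose cube is $Q(x,\frac L2)$. This lateral transport is the whole content of the theorem, and the mechanism you sketch cannot supply it. A forward time shift of a sub-rectangle $P_i$ leaves $\mathrm{pr}_x(P_i)$ unchanged. So the copies of the $P_i$ lying over $Q(x,L)\setminus Q(x,\frac L2)$ never pile up over $S$, however large $N$ is. The non-concentration estimate bounds the weight of a set of \emph{small} relative measure in an earlier region by the weight of a later region; it gives no lower bound for $w(S)$. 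If $w(S)$ were tiny, the mass would sit on the complement of $S$ in the later slab, which has large relative measure, so no factor $N^{-1/q'}$ appears and there is no contradiction. The zero-lag reverse H\"older inequality only compares the two halves of one rectangle over the same cube. Any proof must therefore use rectangles whose cubes are translated relative to one another, and your sketch never does. (Your preliminary embedding also fails in general. Any parabolic rectangle whose lower half contains $R^-(\gamma)$ has an upper half reaching at least time $t+(1-\gamma)L^p$. This is later than the start $t+(\frac{1+\gamma}{2}-\frac{1-\gamma}{2^{p+1}})L^p$ of $\frac12R^+(\gamma)$ whenever $\gamma<\frac{2^p+1}{3\cdot2^p+1}$.)

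The missing idea is Theorem \ref{shifting}. There exist $\nu\in(0,\frac14)$ and $\Theta$, depending only on $n,p,q,[w]_{RH_q^+}$, such that $w(R^-(\gamma))\lesssim w(R^-(\gamma)+(\pm\nu l(R)e_i,\Theta[l(R)]^p))$ uniformly in $\gamma\in[0,\frac12]$. One writes $w(R^-(\gamma))\lesssim w(R^+(\gamma))\le w(S_\nu^+(\gamma))+w(E_\nu)$. Here $S_\nu^+(\gamma)$ is $R^+(\gamma)$ moved sideways by $\nu l(R)$, and $E_\nu$ is the leftover strip, of relative measure $\nu$. This is where non-concentration genuinely enters: it gives $w(E_\nu)\lesssim\nu^{1/q'}w(R^{++}(\gamma))$. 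The error $w(R^{++}(\gamma))$ is then controlled by applying the estimate at scale $2^{-4}l(R)$, $2^4$ times, to each dyadic piece of $R^{++}(\gamma)$. These errors are summed using the bounded overlap of the enlargements $\Omega_{1,b_0}$ (Lemma \ref{finite overlapping}). The resulting recursion keeps $D_k$ bounded and gives $\varepsilon_k\le 2^{-k}$ once $\nu$ is small. The error term $\varepsilon_k w(\Omega_{1,b_0}(R^-(\gamma)))$ then vanishes in the limit simply because that enlargement has finite weight. The doubling follows by cutting $R^-(\gamma)$ into rectangles of side $L/N$ and moving each into $Q(x,\frac L2)$ in $O(N/\nu)$ such steps, each costing time $\Theta(L/N)^p$. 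The total delay is $O(N^{1-p}L^p)$, which fits before $\frac12R^+(\gamma)$ precisely because $p>1$; a final pure time shift finishes the argument. None of these ingredients appears in your proposal. Specifically, it lacks small lateral steps whose loss is controlled by non-concentration, the multiscale self-improving recursion, and the use of $p>1$ to pay for many lateral steps. So \eqref{20260730.1813}, and with it the converse inclusion, is not established.
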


To overcome the essential difficulty arising from the time lag and the
parabolic geometry, we first establish the uniform parabolic space-time shifting
property for weights in $\bigcup_{q\in(1,\infty]}RH_q^+$ (see Theorem
\ref{shifting}), which allows a parabolic reverse H\"older weight to be moved
not only forward in time but also in spatial directions, at the cost of a
controlled shift. After that, we use this uniform parabolic space-time shifting
property to prove that, if $w\in\bigcup_{q\in(1,\infty]}RH_q^+$, then $w$
automatically satisfies the parabolic forward in time doubling condition
\eqref{20260730.1813}, thereby completing the proof of Theorem
\ref{question 4.6}.

Next, using a new one-sided stopping time argument, we establish a parabolic
John--Nirenberg inequality for functions in $\mathrm{BMO}^+(\gamma)$.
Then, applying this John--Nirenberg inequality and Theorem \ref{question 4.6},
we answer Questions \ref{question A} and \ref{question B} as follows.

\begin{theorem}\label{question 4.5}
Let $\gamma\in(0,1)$. Then the following two statements hold.
\begin{enumerate}
\item[\rm(i)] $\mathrm{PBMO}^-(\gamma)=\mathrm{BMO}^+(\gamma)=
-\mathrm{BMO}^-(\gamma)$. Moreover, for any $f\in L_\mathrm{loc}^1$,
\begin{align*}
\|f\|_{\mathrm{PBMO}^-(\gamma)}\sim\|f\|_{\mathrm{BMO}^+(\gamma)}\sim
\|-f\|_{\mathrm{BMO}^-(\gamma)},
\end{align*}
where the positive equivalence constants are independent of $f$.

\item[\rm(ii)] $A_\infty^+(\gamma)=\bigcup_{r\in[1,\infty)}A_r^+(\gamma)$.
\end{enumerate}
\end{theorem}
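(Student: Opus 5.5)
Both parts rest on a new parabolic John--Nirenberg inequality for $\mathrm{BMO}^+(\gamma)$, which I would prove first. The target is: for $f\in\mathrm{BMO}^+(\gamma)$ with $\gamma\in(0,1)$ and every $R\in\mathcal R$,
\begin{align*}
\left|\left\{(x,t)\in R^-(\gamma):\ \left[f-f_{R^+(\gamma)}\right]_+(x,t)>\lambda\right\}\right|
\le C_1 e^{-C_2\lambda/\|f\|_{\mathrm{BMO}^+(\gamma)}}\left|R^-(\gamma)\right|,
\end{align*}
possibly after enlarging the lag on the right-hand side.

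The tool is a one-sided Calder\'on--Zygmund stopping time. Partition $R^-(\gamma)$ into parabolic subrectangles and stop at the maximal ones $S$ where $\fint_S [f-f_{R^+(\gamma)}]_+$ exceeds a multiple of $\|f\|_{\mathrm{BMO}^+(\gamma)}$. On each stopped $S$ I compare with $f_{S^+(\gamma)}$. The key estimate is
\begin{align*}
f_{S^+(\gamma)}\le f_{R^+(\gamma)}+C\|f\|_{\mathrm{BMO}^+(\gamma)},
\end{align*}
which is the only direction the one-sided norm controls. To get it, I would chain from $S^+(\gamma)$ forward in time to $R^+(\gamma)$ through a sequence of rectangles. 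The positive lag $\gamma$ lets each link satisfy ``lower part lies in $R^-$, upper part lies in $R^+$'', and each link costs $\|f\|_{\mathrm{BMO}^+(\gamma)}$ only for upward oscillation. Iterating the stopping gives geometric decay of the level sets, which yields the inequality. I expect this chaining to be the main obstacle. The number of links must stay bounded, and the overlap/packing of the shifted rectangles $S^+(\gamma)$ (which leave $R^-(\gamma)$) must be handled, perhaps via a Vitali-type or time-slicing argument as in Kinnunen--Saari.

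For (i), the inclusion $\mathrm{PBMO}^-(\gamma)=\mathrm{BMO}^+(\gamma)\cap[-\mathrm{BMO}^-(\gamma)]\subset\mathrm{BMO}^+(\gamma)$ is \cite[Proposition 4.4]{ks(na-2016)}. For the converse, I would take $f\in\mathrm{BMO}^+(\gamma)$ and use the John--Nirenberg inequality to find small $\epsilon>0$ with
\begin{align*}
\fint_{R^-(\gamma)}e^{\epsilon(f-f_{R^+(\gamma)})}\le C
\end{align*}
uniformly in $R$. Setting $w:=e^{\epsilon f}$ and applying Jensen on $R^+(\gamma)$ gives $w\in A_\infty^+(\gamma)$, indeed an $A_2$-type one-sided bound on one side. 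Next I show that such $w$ lies in some $RH_q^+$, or directly in $A_r^+(\gamma)$. Then Theorem \ref{question 4.6} and the known properties of $\bigcup_r A_r^+(\gamma)$ from \cite{km(am-2024)} give $\fint_{R^+(\gamma)}w^{-\delta}\lesssim(\fint_{R^-(\gamma)}w)^{-\delta}$. Taking logarithms, this is exactly the lower estimate $\fint_{R^+(\gamma)}(f-c)_-\lesssim1/\epsilon$ with $c=f_{R^-(\gamma)}$ or $c=f_{R^+(\gamma)}$. That places $-f\in\mathrm{BMO}^-(\gamma)$, with norm controlled by $\|f\|_{\mathrm{BMO}^+(\gamma)}$, since $\epsilon$ scales like $1/\|f\|$. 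The symmetric argument handles $-\mathrm{BMO}^-(\gamma)\subset\mathrm{BMO}^+(\gamma)$: reverse time and reflect $f\mapsto -f$.

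For (ii), $\bigcup_r A_r^+(\gamma)\subset A_\infty^+(\gamma)$ follows by Jensen. Conversely, for $w\in A_\infty^+(\gamma)$, Jensen shows $\ln w\in\mathrm{BMO}^+(\gamma)$: the reverse Jensen inequality bounds $\fint_{R^-}e^{\ln w-(\ln w)_{R^+}}$, hence also the positive part. By (i), $\ln w\in\mathrm{PBMO}^-(\gamma)$. The known characterization of $\mathrm{PBMO}^-(\gamma)$ through powers of $A_2^+(\gamma)$ weights \cite{ks(na-2016)} then gives $w^\delta\in A_2^+(\gamma)$ for some $\delta>0$. To recover $w$ itself I would use the reverse Jensen bound once more to get a reverse H\"older inequality $w\in RH_q^+$, after which Theorem \ref{question 4.6} gives $w\in A_r^+(\gamma)$. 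This last conversion might need the shifting property Theorem \ref{shifting} to remove the lag mismatch between $R^\pm(\gamma)$ and $R^\pm(0)$.
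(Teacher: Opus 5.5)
\paragraph{Verdict.} Your plan for (i) has the same overall structure as the paper, but the John--Nirenberg inequality it rests on is not established and the proposed mechanism would fail. The last step of (ii) is also unsupported.

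\paragraph{Part (i): what works.} The route is exponential integrability of $f-f_{R^+(\gamma)}$, then a reverse H\"older inequality for $w=e^{\epsilon f}$, then Theorem \ref{question 4.6}, then the lower estimate. The reverse H\"older step you leave open is easy. Apply the exponential bound with $2\epsilon$ and Jensen on $R^+(\gamma)$ to get $(\fint_{R^-(\alpha)}w^2)^{1/2}\lesssim\fint_{R^+(\gamma)}w$, and let Lemma \ref{Kim lem 3} absorb the lag mismatch. Your direct unpacking of the $A_r^+(\gamma)$ condition to bound $\|-f\|_{\mathrm{BMO}^-(\gamma)}$ is correct.

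\paragraph{Part (i): the gap in the John--Nirenberg inequality.} This inequality is the main new ingredient, and your stopping rule does not deliver it.
\begin{itemize}
\item Stopping on $\fint_S[f-f_{R^+(\gamma)}]_+$ controls $f$ on $S$ and on its unstopped parent. It says nothing about $f_{S^+(\gamma)}$, which lies forward in time, typically outside $\pi S$. It may sit in already-stopped regions, or outside $R^-(\gamma)$ altogether.
\item Forward chaining cannot give $f_{S^+(\gamma)}\le f_{R^+(\gamma)}+C\|f\|_{\mathrm{BMO}^+(\gamma)}$ with uniform $C$. A link costs $O(\|f\|_{\mathrm{BMO}^+(\gamma)})$ only if the scale grows by a bounded factor. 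So from a rectangle of generation $i$ a chain yields only a bound of order $i\|f\|_{\mathrm{BMO}^+(\gamma)}$.
\end{itemize}
The missing idea is to stop on the upper averages themselves.
\begin{itemize}
\item Work on $R^-(\alpha)$ with $\alpha>\gamma$; the enlarged lag goes on the lower set, and you keep $f_{R^+(\gamma)}$. Use a lattice in which $\widetilde U\subset\widetilde{\pi U}^-(\gamma)$ for every child $U$.
\item Select maximal $U$ with $f_{\widetilde U^+(\gamma)}>f_{\widetilde P^+(\gamma)}+C\|f\|_{\mathrm{BMO}^+(\gamma)}$. A single link gives $f_{\widetilde U^+(\gamma)}\le f_{\widetilde{\pi U}^+(\gamma)}+2^{m(n+p)}\|f\|_{\mathrm{BMO}^+(\gamma)}$, and maximality bounds the parent term.
\item Smallness of the stopped family comes from a Vitali selection of pairwise disjoint $\widetilde U^+(\gamma)$. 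These all lie in $\widetilde P^-(\gamma)$, where the $\mathrm{BMO}^+(\gamma)$ condition for $\widetilde P$ applies.
\item Off the stopped set, Lebesgue differentiation along $f_{\widetilde{P_i}^+(\gamma)}$ gives the pointwise bound.
\end{itemize}

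\paragraph{Part (ii): the unsupported step.} The step where you \emph{use the reverse Jensen bound once more to get $w\in RH_q^+$} is not justified.
\begin{itemize}
\item John--Nirenberg only integrates $w^\delta$ with $\delta\lesssim 1/\|\ln w\|_{\mathrm{BMO}^+(\gamma)}$, which may be far below $1$.
\item By Theorem \ref{question 4.6}, a reverse H\"older inequality for $w$ itself from the reverse Jensen condition is equivalent to what you are proving.
\end{itemize}
You already have what you need. The statement $w^\delta\in A_2^+(\gamma)$ is the same as $w^{-\delta}\in A_2^-(\gamma)$, and Jensen gives $A_2^-(\gamma)\subset A_\infty^-(\gamma)$. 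With $q:=1+1/\delta$ one has $w^{1-q'}=w^{-\delta}$. So $w\in A_\infty^+(\gamma)$ together with Lemma \ref{Kim lema 4}(iii) yields $w\in A_q^+(\gamma)$ directly, with no shifting argument needed.
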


We point out that, on the real line, any open set can be uniquely represented
as a union of at most countably many disjoint open connected intervals.
However, this fundamental topological property fails in higher dimensions.
Furthermore, in the parabolic geometry, the temporal variable scales as the
$p$-th power of the spatial variables. Consequently, when applying a dyadic
decomposition to a parabolic rectangle, one would like to divide the temporal
edge into $2^p$ equally long intervals. However, $p$ is not necessarily an
integer, which causes the shape of the resulting subrectangles to be different
from the parent rectangle. In addition, the presence of the time lag introduces
a strictly positive temporal gap between the relevant rectangles $R^-(\gamma)$
and $R^+(\gamma)$. These intrinsic geometric obstacles render both the
one-dimensional one-sided techniques and the classical Euclidean dyadic tools
inapplicable in the present setting. To overcome these difficulties, we must
fully exploit the specific parabolic rectangle dyadic decomposition and develop
new techniques, including a uniform parabolic space-time shifting strategy and
a new one-sided stopping time argument.

The organization of the remainder of this article is as follows.
In Section \ref{section2}, we first show that, if $w\in RH_q^+$ for some
$q\in(1,\infty]$, then $[w]_{RH_q^+(\gamma)}$ is uniformly bounded for $\gamma$
in any compact subinterval of $[0,1)$ (see Lemma \ref{Kim lemma 1}). Combining this and
a parabolic finite overlapping lemma (see Lemma \ref{finite overlapping}),
we establish the uniform parabolic space-time shifting property for weights in
$\bigcup_{q\in(1,\infty]}RH_q^+$ (see Theorem \ref{shifting}). Finally, we use
Theorem \ref{shifting} to prove Theorem \ref{question 4.6}. In Section
\ref{section3}, we first establish a parabolic John--Nirenberg inequality
for functions in $\mathrm{BMO}^+(\gamma)$ (see Lemma \ref{J-N for BMO+}), which
can be viewed as a higher-dimensional counterpart to the one-sided
John–Nirenberg inequality for functions in $\mathrm{BMO}^+(\mathbb{R})$
established in \cite[Theorem 3]{mt(jlms-1994)}. Then we use Lemma
\ref{J-N for BMO+} and Theorem \ref{question 4.6} to show Theorem
\ref{question 4.5}. As applications, we characterize $\mathrm{BMO}^+(\gamma)$
in terms of a new parabolic John--Nirenberg inequality and exponential
integrability (see Corollary \ref{J-N for BMO+ 2}); moreover, further
applications also give that $\mathrm{BMO}^+(\gamma)$ is independent of the time
lag (see Corollary \ref{BMO+ time lag}), the null space of
$\mathrm{BMO}^+(\gamma)$ consists of all non-decreasing functions which depend
only on the time variable (see Corollary \ref{BMO+ null space}), and the
logarithm of any positive weak solution to the equation \eqref{20260811.1443}
belongs to $\mathrm{BMO}^+(\gamma)$ (see Corollary \ref{solutions J-N}).

We end this introduction by making some notational conventions. Throughout
this article, let $\mathbb{N}:=\{1,\,2,\,\dots\}$ and
$\mathbb{Z}_+:=\mathbb{N}\cup\{0\}$. Let $\mathbf{0}$ denote the origin of
$\mathbb{R}^n$. For any $s\in\mathbb{R}$, the \emph{notation}
$\lceil s\rceil$ denotes the smallest integer not less than $s$ and the
\emph{notation} $\lfloor s\rfloor$ denotes the largest integer not greater than
$s$. For any given $q\in[1,\infty]$, we denote by $q'$ its \emph{conjugate
exponent}, i.e., $\frac1q+\frac{1}{q'}=1$. For any $x:=(x_1,\dots,x_n),
y:=(y_1,\dots,y_n)\in\mathbb{R}^n$, let
$$\|x-y\|_\infty:=\max\{|x_1-y_1|,\dots,|x_n-y_n|\}.$$
The \emph{notation} $f\lesssim g$ means that there exists a positive constant $C$
such that $f\leq Cg$. If $f\lesssim g$ and $g\lesssim f$, then we write $f\sim
g$. If $f\leq Cg$ and $g=h$ or $g\leq h$, we then write $f\lesssim g=h$ or
$f\lesssim g\leq h$. For any measurable set $A\subset\mathbb{R}^{n+1}$, we
denote by $|A|$ its Lebesgue measure. For any rectangle
$R:=Q(x,L)\times[t,T)\subset\mathbb{R}^{n+1}$ with
$x\in\mathbb{R}^n$, $L\in(0,\infty)$, and $-\infty<t<T<\infty$, let
$l_x(R):=L$ and $l_t(R):=T-t$. For any $A\subset\mathbb{R}^{n+1}$, denote by
the \emph{notation} $\mathrm{pr}_x(A)$ the projection of $A$ to $\mathbb{R}^n$
and by the \emph{notation} $\mathrm{pr}_t(A)$ the projection of $A$ to the time
axis. Finally, in all proofs, we consistently retain the notation introduced in
the original theorem (or related statement).

\section{Proof of Theorem \ref{question 4.6}}
\label{section2}

In this section, we prove Theorem \ref{question 4.6}. To begin with, we recall
some existing developments regarding this problem. The following
Lemma \ref{Kim lem 2}(i) is precisely \cite[Theorem 5.2]{km(am-2024)}
and Lemma \ref{Kim lem 2}(ii) is exactly \cite[Theorem 5.3]{km(jam-2025)}.

\begin{lemma}\label{Kim lem 2}
Let $q\in(1,\infty]$, $r\in[1,\infty)$, and $\gamma\in(0,1)$. Then the following
assertions hold.
\begin{enumerate}
\item[\rm(i)] If $w\in A_r^+(\gamma)$, then there exists
$q:=q(n,p,r,\gamma,[w]_{A_r^+(\gamma)})\in(1,\infty)$ such that $w\in RH_q^+$.

\item[\rm(ii)] Conversely, if $w\in RH_q^+$ satisfies \eqref{20260730.1813},
then there exists $r:=r(n,p,q,\gamma,C_d,[w]_{RH_q^+})\in(1,\infty)$ such that
$w\in A_r^+(\gamma)$.
\end{enumerate}
\end{lemma}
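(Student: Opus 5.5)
Both assertions are already available in the literature, so the proof will consist of quoting them and checking that the hypotheses and the dependence of the constants match. Assertion (i) is \cite[Theorem 5.2]{km(am-2024)}, and assertion (ii) is \cite[Theorem 5.3]{km(jam-2025)}. The only verification needed is that the definitions of $R^\pm(\gamma)$, $A_r^+(\gamma)$, $RH_q^+$ and of \eqref{20260730.1813} agree with those in these references, up to the normalization of the parabolic rectangles. They do, so the constants depend only on the listed parameters. For the reader's orientation I would also recall the mechanism of each proof.

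For (i), the plan is as follows.
\begin{itemize}
\item First, use H\"older's inequality against the $A_r^+(\gamma)$ condition to obtain a quantitative one-sided $A_\infty$-type estimate. It says that a set occupying a small proportion of $R^+(\gamma)$ carries a small proportion of the $w$-mass, measured relative to $w(R^-(\gamma))$.
\item Next, feed this estimate into a parabolic Calder\'on--Zygmund decomposition with time lag, which yields a parabolic Gehring lemma. The output is a reverse H\"older inequality with some exponent $q>1$, with $R^-$ and $R^+$ separated by a lag.
\item Finally, remove the lag, i.e.\ pass from a lagged inequality to one in $RH_q^+=RH_q^+(0)$. This is done by covering $R^-(0)$ with finitely many smaller lagged rectangles whose ``plus'' parts lie in $R^+(0)$.
\end{itemize}

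For (ii), the plan is as follows.
\begin{itemize}
\item Start from $w\in RH_q^+$ and a measurable set $E\subset R^-(\gamma)$. H\"older's inequality gives
\[
\frac{w(E)}{|R^-(\gamma)|}\lesssim\left(\frac{|E|}{|R^-(\gamma)|}\right)^{1/q'}\fint_{R^+(\gamma)}w .
\]
\item Then use the forward-in-time doubling condition \eqref{20260730.1813} to keep the comparison within controlled rectangles. This yields a one-sided $A_\infty$ condition with a uniform exponent.
\item Lastly, a parabolic stopping-time and chaining argument converts this $A_\infty$ condition into $w\in A_r^+(\gamma)$ for some finite $r$.
\end{itemize}

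The genuinely delicate point in both arguments is the chaining across the time lag in the parabolic dyadic decomposition. Since this step is handled in the cited works, no new obstacle arises here, and the lemma follows directly from \cite{km(am-2024), km(jam-2025)}.
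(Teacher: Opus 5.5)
Your proposal is correct and matches the paper: the paper gives no independent argument either, and simply identifies (i) with \cite[Theorem 5.2]{km(am-2024)} and (ii) with \cite[Theorem 5.3]{km(jam-2025)}. Your orientation sketches are not needed. Note, though, that the first step of your sketch for (i) runs in the reverse direction. H\"older's inequality with the $A_r^+(\gamma)$ condition gives $(|E|/|R^+(\gamma)|)^r\lesssim w(E)/w(R^-(\gamma))$ for $E\subset R^+(\gamma)$, so sets of large measure carry large weight, whereas ``small measure implies small weight'' is the kind of estimate that comes from a reverse H\"older inequality, as in Lemma~\ref{Kim lemma 1 cor}(i).
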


By Lemma \ref{Kim lem 2}, we find that, to show Theorem \ref{question 4.6},
it suffices to prove that the $RH_q^+$ condition automatically implies the
parabolic doubling condition \eqref{20260730.1813}. To this end, we first
show that $RH_q^+(\gamma)$ does not depend on the time lag $\gamma$.
Moreover, $[w]_{RH_q^+(\gamma)}$ is uniformly bounded for $\gamma$ in any
compact subinterval of $[0,1)$.

\begin{lemma}\label{Kim lemma 1}
Let $q\in(1,\infty]$, $\gamma\in(0,1)$, and $w$ be a weight. Then $w\in
RH_q^+$ if and only if $w\in RH_q^+(\gamma)$. Furthermore, if $\gamma^*\in
(0,1)$ and $w\in RH_q^+$, then there exists a positive constant
$C(n,p,q,\gamma^*,[w]_{RH_q^+})$ such that
\begin{align}\label{20260730.2216}
\sup_{\gamma\in[0,\gamma^*]}[w]_{RH_q^+(\gamma)}\leq
C\left(n,p,q,\gamma^*,[w]_{RH_q^+}\right).
\end{align}
\end{lemma}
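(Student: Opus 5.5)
The plan is to prove both implications by \emph{chaining}. In both directions I apply the reverse H\"older inequality on a fixed finite number of small parabolic rectangles stacked forward in time. The constants then multiply only a bounded number of times, and that number depends only on $n$, $p$, and $\gamma^*$ (respectively $\gamma$). The basic one-step estimate is the following. If $S\in\mathcal R$ and $w\in RH_q^+(\beta)$ for some $\beta\in[0,1)$, then H\"older's inequality gives
\begin{align*}
w\left(S^-(\beta)\right)\le \left|S^-(\beta)\right|\left[\fint_{S^-(\beta)}w^q\right]^{\frac1q}
\le [w]_{RH_q^+(\beta)}\,w\left(S^+(\beta)\right),
\end{align*}
and the same holds with $\fint w^q$ replaced by $\operatorname{ess\,sup}w$ when $q=\infty$. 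Conversely, $\fint_{S^+(\beta)}w\le(\fint_{S^+(\beta)}w^q)^{1/q}$, so an upper piece of one rectangle can serve as the lower piece of the next rectangle in the chain.

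For the direction $RH_q^+\Rightarrow RH_q^+(\gamma)$ with the uniform bound \eqref{20260730.2216}, fix $\gamma\in[0,\gamma^*]$ and $R=R(x,t,L)$. Since $R^-(\gamma)\subset R^-(0)$ and $|R^-(\gamma)|\ge(1-\gamma^*)|R^-(0)|$, we get
\begin{align*}
\left[\fint_{R^-(\gamma)}w^q\right]^{\frac1q}\le(1-\gamma^*)^{-\frac1q}\left[\fint_{R^-(0)}w^q\right]^{\frac1q}
\le (1-\gamma^*)^{-\frac1q}[w]_{RH_q^+}\fint_{R^+(0)}w.
\end{align*}
It therefore suffices to show that $w(Q(x,L)\times[t,t+\gamma L^p))\lesssim w(R^+(\gamma))$. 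For this I choose $N=N(p,\gamma^*)\in\mathbb N$ so large that $h:=(L/N)^p$ is much smaller than $(1-\gamma^*)L^p$. I then partition $Q(x,L)$ into $N^n$ subcubes $Q_j$ of side $L/N$, and cover $[t,t+L^p)$ by consecutive time intervals $I_k$ of length $h$. Because $L^p/h$ need not be an integer (this is where a non-integer $p$ enters), I start the intervals from the top endpoint $t+L^p$ and allow the lowest interval to stick out below $t$. Each $Q_j\times(I_k\cup I_{k+1})$ is a parabolic rectangle $S$ with $S^-(0)=Q_j\times I_k$ and $S^+(0)=Q_j\times I_{k+1}$. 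The one-step estimate with $\beta=0$ then gives $w(Q_j\times I_k)\le[w]_{RH_q^+}w(Q_j\times I_{k+1})$. Iterating at most $M\le N^p+1$ times moves every piece lying in $[t,t+\gamma L^p)$ into strips contained in $[t+\gamma L^p,t+L^p)$; such strips exist because $h\ll(1-\gamma^*)L^p$. Summing over $j$ and $k$, with bounded overlap, yields $w(R^+(0))\le C(n,p,\gamma^*)[w]_{RH_q^+}^{M}w(R^+(\gamma))$. Hence $\fint_{R^+(0)}w\lesssim\fint_{R^+(\gamma)}w$, and this gives \eqref{20260730.2216}.

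For the converse $RH_q^+(\gamma)\Rightarrow RH_q^+$, fix $R$ and chain again, now with lag $\gamma$. Take small rectangles $S=Q_j\times[s-\ell^p,s+\ell^p)$ with $\ell=L/N$. The $q$-th power average of $w$ on $R^-(0)$ is controlled by the averages on the pieces $S^-(\gamma)$ that cover $R^-(0)$; note that the gaps $[s-\gamma\ell^p,s+\gamma\ell^p)$ must also be covered, which I do with a second, time-shifted family of rectangles. Each $S^-(\gamma)$ average is bounded by $[w]_{RH_q^+(\gamma)}\fint_{S^+(\gamma)}w$. After applying H\"older, this is in turn bounded by the $q$-average on $S^+(\gamma)$, which is the lower piece of the next rectangle in the chain. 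A bounded number $M(n,p,\gamma)$ of steps carries everything into $R^+(0)$. At that point the final step only needs the plain averages $\fint w$ over subrectangles of $R^+(0)$, and these are at most $C\fint_{R^+(0)}w$ because each subrectangle has measure comparable to $|R^+(0)|$. One more point needs care: the left-hand side requires $(\fint_{R^-(0)}w^q)^{1/q}$, so at the first step I use the $\ell^q$ sum over the pieces rather than a plain $\ell^1$ sum.

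I expect the main obstacle to be the geometric bookkeeping. The time intervals have to tile correctly even though $L^p/\ell^p$ is not an integer, and in the lag-$\gamma$ chain the time gaps between $S^-(\gamma)$ and $S^+(\gamma)$ make the lower pieces fail to tile $R^-(0)$. I would handle both issues with a fixed finite family of overlapping shifted grids. Its cardinality depends only on $n$, $p$, and $\gamma$, so the overlaps cost only constant factors.
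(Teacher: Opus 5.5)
Your proof is correct, but it is organized differently from the paper's.

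\textbf{What the paper does.} The paper does not prove the equivalence $RH_q^+\Leftrightarrow RH_q^+(\gamma)$ itself; it cites \cite[Lemma 2.5]{km(jam-2025)} and proves only \eqref{20260730.2216}.
\begin{itemize}
\item It first establishes the shifted estimate \eqref{20260730.2218}, namely $[\fint_{R^-(\gamma)}w^q]^{1/q}\lesssim\fint_{R^-(\gamma)+(\mathbf{0},\lambda(1-\gamma)L^p)}w$ for $\lambda\in[\frac12,1]$. This comes from covering $R^-(\gamma)$ by boundedly overlapping zero-lag lower halves of side comparable to $L$.
\item It then iterates this estimate $\lceil\frac{1+\gamma}{1-\gamma}\rceil$ times, using H\"older's inequality between steps. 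The value of $\lambda$ is tuned so that the last translate of $R^-(\gamma)$ is exactly $R^+(\gamma)$.
\end{itemize}

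\textbf{What you do instead.} You apply $RH_q^+$ only once, to $R$ itself, after enlarging $R^-(\gamma)$ to $R^-(0)$. This reduces everything to the forward-in-time doubling $w(R^+(0))\lesssim w(R^+(\gamma))$. You prove that doubling by iterating only the measure-level inequality $w(S^-(0))\le[w]_{RH_q^+}w(S^+(0))$ along the columns of a grid of rectangles of side $L/N$.

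\textbf{Trade-offs.}
\begin{itemize}
\item Your route iterates only an $L^1$ inequality and needs no tuning of the step length.
\item You handle $q=\infty$ directly with the essential supremum, rather than by the paper's limit $q\to\infty$.
\item The price is about $N^p\sim(1-\gamma^*)^{-1}$ steps at scale $L/N$, instead of $\lceil\frac{1+\gamma^*}{1-\gamma^*}\rceil$ steps at scale comparable to $L$. The resulting constant is of the same exponential type.
\item The paper's version gives in addition the reusable $L^q$-to-$L^1$ shift estimate \eqref{20260730.2218}.
\end{itemize}

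\textbf{Your converse direction.} You chain lag-$\gamma$ rectangles of side $\ell=L/N$ with $S'^-(\gamma)=S^+(\gamma)$, each step advancing by $(1+\gamma)\ell^p$, until every piece lies in $R^+(0)$. At the first step you sum $q$-th powers over the pieces. This argument is sound and makes the lemma self-contained, whereas the paper imports this direction from the literature.

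The ``gaps'' you worry about are not a real obstacle. Tile $[t-L^p,t)$ directly by intervals of length $(1-\gamma)\ell^p$ and regard each tile as $S^-(\gamma)$ for its own rectangle $S$. Then no second, time-shifted family of rectangles is needed.
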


\begin{proof}
From \cite[Lemma 2.5]{km(jam-2025)}, we deduce that the first statement holds,
i.e., $w\in RH_q^+$ if and only if $w\in RH_q^+(\gamma)$. We turn to prove
the second assertion. Let $\gamma^*\in(0,1)$ and $w\in RH_q^+$. We first assume
that $q<\infty$ and we show that there exists a positive constant
$C_1(n,p,q,\gamma^*,[w]_{RH_q^+})$ such that, for any $R\in\mathcal{R}$ and
$\lambda\in[\frac12,1]$,
\begin{align}\label{20260730.2218}
\left[\fint_{R^-(\gamma)}w^q\right]^\frac1q\leq
C_1\left(n,p,q,\gamma^*,[w]_{RH_q^+}\right)\fint_{R^-(\gamma)+(\mathbf{0},
\lambda(1-\gamma)L^p)}w.
\end{align}
Indeed, let $R\in\mathcal{R}$ and $\lambda\in[\frac12,1]$. Define
$l:=\lambda^\frac1p(1-\gamma)^\frac1pL$. Then
$l\in[(\frac{1-\gamma^*}{2})^\frac1pL,L]$. We cover each spatial edge of
$R^-(\gamma)$ by $\lceil\frac{L}{l}\rceil$ half-open intervals of equal length
$l$, with an overlap bounded by 2. We also cover the temporal edge of
$R^-(\gamma)$ by 2 equally long half-open intervals with length
$l^p$. Then we obtain $(\lceil\frac{L}{l}\rceil)^n$ spatial
cubes $\{Q_i\}_{i\in\mathbb{N}\cap[1,(\lceil\frac{L}{l}\rceil)^n]}$ with edge
length $l$, and two temporal intervals $J_1$ and $J_2$ with edge length $l^p$.
For any $i\in\mathbb{N}\cap[1,(\lceil\frac{L}{l}\rceil)^n]$ and $j\in\{1,\,2\}$,
define $S_{i,j}^-:=Q_i\times J_j$ and $S_{i,j}^+:=S_{i,j}^-+(\mathbf{0},l^p)$.
Then
\begin{align*}
R^-(\gamma)=\bigcup_{i\in\mathbb{N}\cap[1,(\lceil\frac{L}{l}\rceil)^n],
\,j\in\{1,\,2\}}S_{i,j}^-
\end{align*}
and
\begin{align*}
R^-(\gamma)+\left(\mathbf{0},\lambda(1-\gamma)L^p\right)
=\bigcup_{i\in\mathbb{N}\cap[1,(\lceil\frac{L}{l}\rceil)^n],
\,j\in\{1,\,2\}}S_{i,j}^+
\end{align*}
both with an overlap bounded by $2^{n+1}$. Combining this and the proven
conclusion that $l\in[(\frac{1-\gamma^*}{2})^\frac1pL,L]$ further implies that
\begin{align*}
\left[\fint_{R^-(\gamma)}w^q\right]^\frac1q
&\leq\left[\sum_{i\in\mathbb{N}\cap[1,(\lceil\frac{L}{l}\rceil)^n],
\,j\in\{1,\,2\}}\frac{|S_{i,j}^-|}{|R^-(\gamma)|}
\fint_{S_{i,j}^-}w^q\right]^\frac1q\\
&\leq\sum_{i\in\mathbb{N}\cap[1,(\lceil\frac{L}{l}\rceil)^n],\,j\in\{1,\,2\}}
\left[\frac{|S_{i,j}^-|}{|R^-(\gamma)|}\fint_{S_{i,j}^-}w^q\right]^\frac1q\\
&\leq[w]_{RH_q^+}\sum_{i\in\mathbb{N}\cap[1,(\lceil\frac{L}{l}\rceil)^n],
\,j\in\{1,\,2\}}\left[\frac{|S_{i,j}^-|}{|R^-(\gamma)|}\right]^\frac1q
\fint_{S_{i,j}^+}w\\
&\leq\left(\frac{2}{1-\gamma^*}\right)^\frac{n+p}{pq'}2^{n+1}[w]_{RH_q^+}
\fint_{R^-(\gamma)+(\mathbf{0},\lambda(1-\gamma)L^p)}w.
\end{align*}
Thus, \eqref{20260730.2218} holds with $C_1(n,p,q,\gamma^*,[w]_{RH_q^+}):=
(\frac{2}{1-\gamma^*})^\frac{n+p}{pq'}2^{n+1}[w]_{RH_q^+}$.

Now, we show \eqref{20260730.2216}. Let $R\in\mathcal{R}$,
$N:=\lceil\frac{1+\gamma}{1-\gamma}\rceil$, and
$\lambda\in\frac{\frac{1+\gamma}{1-\gamma}}{N}$. Then
$N\leq\lceil\frac{1+\gamma^*}{1-\gamma^*}\rceil$ and $\lambda\in[\frac12,1]$.
For any $k\in[0,N]$, define $P_k:=R^-(\gamma)+
(\mathbf{0},k\lambda(1-\gamma)L^p)$. Then $P_0=R^-(\gamma)$ and
$P_N=R^+(\gamma)$. Moreover, applying \eqref{20260730.2218} and H\"older's
inequality, we find that
\begin{align*}
\left[\fint_{R^-(\gamma)}w^q\right]^\frac1q&\leq
C_1\left(n,p,q,\gamma^*,[w]_{RH_q^+}\right)\fint_{P_1}w\\
&\leq C_1\left(n,p,q,\gamma^*,[w]_{RH_q^+}\right)
\left[\fint_{P_1}w^q\right]^\frac1q\\
&\leq C_1\left(n,p,q,\gamma^*,[w]_{RH_q^+}\right)^N\fint_{P_N}w\\
&\leq C_1\left(n,p,q,\gamma^*,[w]_{RH_q^+}\right)
^{\lceil\frac{1+\gamma^*}{1-\gamma^*}\rceil}\fint_{R^+(\gamma)}w.
\end{align*}
Taking the supremum over $R\in\mathcal{R}$, we conclude that
\eqref{20260730.2216} holds with
$$C(n,p,q,\gamma^*,[w]_{RH_q^+}):=
C_1(n,p,q,\gamma^*,[w]_{RH_q^+})^{\lceil\frac{1+\gamma^*}{1-\gamma^*}\rceil}.$$
By letting $q\to\infty$, we obtain the same conclusion for $RH_\infty^+$,
which then completes the proof of Lemma \ref{Kim lemma 1}.
\end{proof}

We need to use the following basic properties of $RH_q^+$, which are inherently
inspired by \cite[Theorem 3.2 and Lemma 3.3]{km(jam-2025)}. It is worth
emphasizing that, unlike those in \cite{km(jam-2025)}, our constants in
Lemma \ref{Kim lemma 1 cor} depend only on the upper bound $\gamma^*$ of the
time lag $\gamma\in[0,\gamma^*]$, thanks to Lemma \ref{Kim lemma 1}. In what
follows, for any $A\subset\mathbb{R}^{n+1}$ and $(x,t)\in\mathbb{R}^{n+1}$, define
\begin{align*}
A+(x,t):=\{(y+x,s+t):(y,s)\in A\}.
\end{align*}

\begin{lemma}\label{Kim lemma 1 cor}
Let $q\in(1,\infty]$, $\gamma^*\in(0,1)$, $\gamma\in[0,\gamma^*]$,
$\Theta\in(0,\infty)$, and $w\in RH_q^+$. Then the following statements hold.
\begin{enumerate}
\item[\rm(i)] For any $R\in\mathcal{R}$ and any measurable subset
$E\subset R^-(\gamma)$,
\begin{align*}
\frac{w(E)}{w(R^+(\gamma))}\leq C\left(n,p,q,\gamma^*,[w]_{RH_q^+}\right)
\left[\frac{|E|}{|R^-(\gamma)|}\right]^\frac{1}{q'},
\end{align*}
where $C(n,p,q,\gamma^*,[w]_{RH_q^+})$ is the same as in \eqref{20260730.2216}.

\item[\rm(ii)] There exists a positive constant $C(n,p,q,\gamma^*,\Theta,
[w]_{RH_q^+})$ such that, for any $R\in\mathcal{R}$ and $\theta\in[0,\Theta]$,
\begin{align*}
w\left(R^-(\gamma)\right)\leq C\left(n,p,q,\gamma^*,\Theta,
[w]_{RH_q^+}\right)w\left(R^-(\gamma)
+\left(\mathbf{0},\theta[l(R)]^p\right)\right).
\end{align*}
\end{enumerate}
\end{lemma}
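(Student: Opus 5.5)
For (i) I will use H\"older's inequality on $E$ together with Lemma \ref{Kim lemma 1}. Take $q<\infty$ and fix $R\in\mathcal{R}$ and a measurable $E\subset R^-(\gamma)$. H\"older's inequality gives
\begin{align*}
w(E)=\int_{R^-(\gamma)}w\mathbf{1}_E\leq\left[\int_{R^-(\gamma)}w^q\right]^\frac1q|E|^\frac{1}{q'}
=|R^-(\gamma)|\left[\fint_{R^-(\gamma)}w^q\right]^\frac1q
\left[\frac{|E|}{|R^-(\gamma)|}\right]^\frac{1}{q'}.
\end{align*}
By \eqref{20260730.2216}, $[\fint_{R^-(\gamma)}w^q]^{1/q}\leq C(n,p,q,\gamma^*,[w]_{RH_q^+})\fint_{R^+(\gamma)}w$. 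Since $|R^-(\gamma)|=|R^+(\gamma)|$, the factor $|R^-(\gamma)|\fint_{R^+(\gamma)}w$ equals $w(R^+(\gamma))$, and this is the claim. For $q=\infty$ the same argument works with the trivial bound $w(E)\leq|E|\,\mathrm{ess\,sup}_{R^-(\gamma)}w$ and $q'=1$.

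For (ii) I will chain the shifted reverse H\"older estimate \eqref{20260730.2218} from the proof of Lemma \ref{Kim lemma 1}. That estimate says that, for every $\lambda\in[\frac12,1]$, the $L^q$-average over $R^-(\gamma)$ is controlled by the average of $w$ over $R^-(\gamma)+(\mathbf{0},\lambda(1-\gamma)L^p)$, with a constant $C_1$ depending only on $n,p,q,\gamma^*,[w]_{RH_q^+}$; the $q=\infty$ version follows by letting $q\to\infty$ as in that proof. Write $L:=l(R)$ and $\theta L^p=s(1-\gamma)L^p$ with $s:=\theta/(1-\gamma)\in[0,\Theta/(1-\gamma^*)]$.

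\emph{Case $s\geq\frac12$.} Choose $N:=\lceil s\rceil$ and $\lambda:=s/N$. Then $\lambda\in[\frac12,1]$, and $N$ is bounded by a constant depending only on $\Theta$ and $\gamma^*$. Let $P_k:=R^-(\gamma)+(\mathbf{0},k\lambda(1-\gamma)L^p)$ for $k=0,\dots,N$. Each $P_k$ is the $R^-(\gamma)$ of a translated rectangle with the same $L$. Hence, using H\"older's inequality and then \eqref{20260730.2218} at each step,
\begin{align*}
\fint_{P_k}w\leq\left[\fint_{P_k}w^q\right]^\frac1q\leq C_1\fint_{P_{k+1}}w.
\end{align*}
Iterating gives $w(R^-(\gamma))\leq C_1^Nw(P_N)$, and $P_N$ is exactly the target set. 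Here the volumes are equal, so averages and masses are interchangeable.

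\emph{Case $s<\frac12$.} Here the shift is too small to apply \eqref{20260730.2218} directly. I will first move forward by a total of $1+s$ units: one step with $\lambda=\frac12$ and one step with $\lambda=\frac12+s\in[\frac12,1]$. This reaches $R^-(\gamma)+(\mathbf{0},(1+s)(1-\gamma)L^p)$ from $R^-(\gamma)$. This does not yet land on the target. The fix is to use instead the fact that any shift $\geq\frac12$ is admissible: cover $R^-(\gamma)$ by finitely many subrectangles of comparable size, and for each one compare it with a piece of the target set lying at least half its own time length ahead. The pieces are obtained as in the proof of \eqref{20260730.2218}, by splitting the time interval into a bounded number $M=M(\Theta,\gamma^*)$ of subintervals, which makes a small absolute shift relatively large.

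\textbf{Main obstacle.} The only delicate step is this small-shift regime. Concretely, I would split $R^-(\gamma)$ into $2^M$ pieces, each of the form $Q'\times J$ with $|J|=(1-\gamma)L^p/2^M$ and $Q'$ of side comparable to $|J|^{1/p}$. I would then push each piece forward by the fixed shift $\theta L^p$, which is comparable to $|J|$ provided $\theta\gtrsim2^{-M}$ (choose $M$ accordingly). The case $\theta$ near $0$ is handled trivially: if the shift is at most the piece length, the shifted pieces overlap their originals, and one can instead chain through the $\lambda\in[\frac12,1]$ steps applied to each piece.

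Summing over pieces with bounded overlap, as in the proof of Lemma \ref{Kim lemma 1}, yields (ii) with a constant depending only on $n,p,q,\gamma^*,\Theta,[w]_{RH_q^+}$.
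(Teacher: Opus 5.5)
Your part (i) is exactly the paper's argument (H\"older on $E$ plus Lemma \ref{Kim lemma 1}), and your case $s\geq\frac12$ of (ii) is a correct chaining of \eqref{20260730.2218}. The gap is in the small-shift case, which is where uniformity in $\theta$ is actually at stake, and there your text is inconsistent.

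\begin{itemize}
\item[(a)] You first split the time edge into a bounded number $M=M(\Theta,\gamma^*)$ of subintervals. That cannot work. To apply \eqref{20260730.2218} to a piece, the shift $\theta[l(R)]^p$ must be at least half the piece's time length. This forces at least $(1-\gamma)/(2\theta)$ pieces, which is unbounded as $\theta\to0$.
\item[(b)] You then dispose of ``$\theta$ near $0$'' by noting that the shifted pieces overlap their originals and that one can chain $\lambda\in[\frac12,1]$ steps. Overlap yields no inequality. Chaining such steps only produces shifts of at least half a piece length, so it never reaches a shorter shift.
\end{itemize}

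As written, (ii) is therefore not proved for small $\theta$. Your phrase ``choose $M$ accordingly'' points in the right direction but contradicts (a). The missing observation is that the piece scale should be tied to $\theta$, and that the resulting constant does not depend on how many pieces there are.

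Concretely, let $0<\theta\leq1-\gamma$.
\begin{itemize}
\item Cover $R^-(\gamma)$ by sets $P=Q'\times J\subset R^-(\gamma)$ with $l(Q')=\theta^{1/p}l(R)$ and $|J|=\theta[l(R)]^p$, with overlap at most $2^{n+1}$. These sets fit because $\theta\leq1-\gamma\leq1$.
\item Each such $P$ equals $\widetilde{R}^-(0)$ for some $\widetilde{R}\in\mathcal{R}$, and $P+(\mathbf{0},\theta[l(R)]^p)=\widetilde{R}^+(0)$.
\item H\"older and the definition of $RH_q^+$ (with ess sup when $q=\infty$) give $w(P)\leq[w]_{RH_q^+}w(P+(\mathbf{0},\theta[l(R)]^p))$.
\item Summing over the cover gives, uniformly in $\theta\in(0,1-\gamma]$,
\begin{align*}
w\left(R^-(\gamma)\right)\leq2^{n+1}[w]_{RH_q^+}\,
w\left(R^-(\gamma)+\left(\mathbf{0},\theta[l(R)]^p\right)\right).
\end{align*}
\end{itemize}
For $\theta>1-\gamma$, apply this $N:=\lceil\theta/(1-\gamma)\rceil\leq\lceil\Theta/(1-\gamma^*)\rceil$ times with step $\theta/N$. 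This removes the need for your case split and for \eqref{20260730.2218}. It gives the explicit constant $\max\{1,2^{n+1}[w]_{RH_q^+}\}^{\lceil\Theta/(1-\gamma^*)\rceil}$.

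For comparison, the paper does not prove (ii) directly. It turns (i) into the statement that subsets of $R^-(\gamma)$ of relative measure at most $\alpha$ carry at most half of $w(R^+(\gamma))$. It then invokes the argument of Kinnunen--Myyryl\"ainen's Lemma 3.3(ii), which gives a constant of the form $4\alpha^{-(1+2^{p+1}\Theta/(1-\gamma^*))}$. The repaired version of your route is self-contained. It applies the reverse H\"older inequality directly at the scale of the shift rather than going through the measure-ratio consequence (i).
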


\begin{proof}
We first prove (i). Let $R\in\mathcal{R}$ and $E\subset R^-(\gamma)$ be
measurable. From H\"older's inequality and Lemma \ref{Kim lemma 1}, we infer that
\begin{align*}
\frac{w(E)}{w(R^+(\gamma))}&=\frac{|E|}{w(R^+(\gamma))}\fint_Ew
\leq\frac{|E|}{w(R^+(\gamma))}\left(\fint_Ew^q\right)^\frac1q\\
&\leq\frac{|E|^\frac{1}{q'}}{|R^-(\gamma)|^\frac1qw(R^-(\gamma))}
\left[\fint_{R^-(\gamma)}w^q\right]^\frac1q
\leq\frac{[w]_{RH_q^+(\gamma)}|E|^\frac{1}{q'}}
{|R^-(\gamma)|^\frac1qw(R^-(\gamma))}\fint_{R^+(\gamma)}w\\
&\leq C\left(n,p,q,\gamma^*,[w]_{RH_q^+}\right)
\left[\frac{|E|}{|R^-(\gamma)|}\right]^\frac{1}{q'},
\end{align*}
which then completes the proof of (i). By (i) and the proof of \cite[Lemma
3.3(ii)]{km(jam-2025)} with
$\alpha:=\frac{1}{[2\max\{1,\,C(n,p,q,\gamma^*,[w]_{RH_q^+})1\}]^{q'}}$ and
$\beta:=\frac12$ therein, we find that (ii) holds with
\begin{align*}
C\left(n,p,q,\gamma^*,\Theta,[w]_{RH_q^+}\right)=
4\left[2\max\left\{1,\,C\left(n,p,q,\gamma^*,[w]_{RH_q^+}\right)\right\}\right]
^{q'(1+\frac{2^{p+1}\Theta}{1-\gamma^*})}.
\end{align*}
This completes the proof of Lemma \ref{Kim lemma 1 cor}.
\end{proof}

Now, we recall the parabolic dyadic lattice introduced in
\cite{lmv(2604.12561)}. Let $\gamma\in[0,\frac12]$ and $R\in\mathcal{R}$. The
\emph{parabolic dyadic lattice $\mathcal{D}_1(R^-(\gamma))$} of $R^-(\gamma)$
is defined as follows. Divide each spatial edge of $R^-(\gamma)$ into $2^4$
equally long half-open intervals and partition the temporal edge of
$R^-(\gamma)$ into $k$ equally long half-open intervals, where
\begin{align*}
k:=\begin{cases}
\displaystyle
\lceil2^{4p}\rceil &\displaystyle\mbox{if\ }0\leq\gamma\leq1-\frac{2^{4p}+1}{2^{4p+1}},\\
\displaystyle
\lfloor2^{4p}\rfloor &\displaystyle\mbox{if\ }
1-\frac{2^{4p}+1}{2^{4p+1}}\leq\gamma\leq\frac12.
\end{cases}
\end{align*}
Then we obtain $J$ half-open subrectangles $\{P_i\}_{i\in\mathbb{N}\cap[1,J]}$
of $R^-(\gamma)$, where $J$ equals $2^{4n}\lceil2^{4p}\rceil$ or
$2^{4n}\lfloor2^{4p}\rfloor$. Define
$\mathcal{D}_1(R^-(\gamma)):=\{P_i\}_{i\in\mathbb{N}\cap[1,J]}$.
From \cite[Proposition 2.1]{lmv(2604.12561)}, we deduce that the following
assertions hold.
\begin{Romanlist}
\item\label{(I)} $\{P_i\}_{i\in\mathbb{N}\cap[1,J]}$ are pairwise disjoint and
$R^-(\gamma)=\bigcup_{i\in\mathbb{N}\cap[1,J]}P_i$.

\item\label{(II)} For any $i\in\mathbb{N}$, $l_x(P_i)=\frac{l(R)}{2^4}$ and
$\frac12\frac{(1-\gamma)[l(R)]^p}{2^{4p}}\leq
l_t(P_i)\leq2\frac{(1-\gamma)[l(R)]^p}{2^{4p}}$. Moreover, there exists
$\alpha\in[0,\frac12]$, independent of $R$, such that, for any
$i\in\mathbb{N}\cap[1,J]$, there exists $R_i\in\mathcal{R}$ satisfying
$P_i=R_i^-(\alpha)$.
\end{Romanlist}

Based on the above parabolic dyadic lattice, we then show a parabolic finite
overlapping lemma. Let $a,b\in(0,\infty)$, $Q\subset\mathbb{R}^n$ be a cube,
$I\subset\mathbb{R}$ be an interval, and $R:=Q\times I$. Define the
\emph{anisotropic parabolic enlargement $\Omega_{a,b}(R)$} of $R$ by setting
\begin{align*}
\Omega_{a,b}(R):=\left\{(x,t)\in\mathbb{R}^{n+1}:\mathrm{dist}_\infty
(x,Q)<al(Q)\mbox{\ \ and\ \ } \mathrm{dist}(t,I)<b|I|\right\},
\end{align*}
where, for any $E\subset\mathbb{R}^n$,
$\mathrm{dist}_\infty(x,E):=\inf\{\|x-y\|_\infty:y\in E\}$.

\begin{lemma}\label{finite overlapping}
Let $a,b\in(0,\infty)$ and $\gamma\in[0,\frac12]$. Then there exists a positive
constant $B(n,a,b)$ such that, for any $R\in\mathcal{R}$,
\begin{align*}
\sum_{P\in\mathcal{D}_1(R^-(\gamma))}\boldsymbol{1}_{\Omega_{a,b}(P)}
\leq B(n,a,b).
\end{align*}
\end{lemma}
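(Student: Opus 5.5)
The key observation is that $\mathcal{D}_1(R^-(\gamma))$ is a grid: every $P$ is a product $Q_i\times I_j$. Here the $Q_i$ run through the $2^{4n}$ subcubes obtained by dividing $Q(x,l(R))$ into pieces of common edge length $l(R)/2^4$. The $I_j$ run through the $k$ consecutive, pairwise disjoint, half-open intervals of common length $\ell:=(1-\gamma)[l(R)]^p/k$ partitioning $\mathrm{pr}_t(R^-(\gamma))$. Since
\[
\Omega_{a,b}(Q_i\times I_j)=\{x:\mathrm{dist}_\infty(x,Q_i)<a\,l(Q_i)\}\times\{t:\mathrm{dist}(t,I_j)<b|I_j|\},
\]
the indicator factors as $\boldsymbol{1}_{\Omega_{a,b}(P)}(x,t)=\boldsymbol{1}_{E_i}(x)\boldsymbol{1}_{F_j}(t)$. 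Hence
\[
\sum_{P\in\mathcal{D}_1(R^-(\gamma))}\boldsymbol{1}_{\Omega_{a,b}(P)}(x,t)=\left[\sum_i\boldsymbol{1}_{E_i}(x)\right]\left[\sum_j\boldsymbol{1}_{F_j}(t)\right].
\]
It therefore suffices to bound the spatial and temporal overlap counts separately, each uniformly in $R$ and $\gamma$.

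For the spatial factor, fix $x$ and write $h:=l(R)/2^4$. If $x\in E_i$, then $\|x-c_i\|_\infty<(a+\frac12)h$, where $c_i$ is the center of $Q_i$. The centers $c_i$ form a lattice $c_1+h\mathbb{Z}^n$, so in each coordinate at most $2a+2$ lattice values lie in an open interval of length $(2a+1)h$. Thus $\sum_i\boldsymbol{1}_{E_i}(x)\le(2a+2)^n$, and this bound does not depend on $R$.

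For the temporal factor the argument is the same in one dimension. All $I_j$ have the same length $\ell$ and left endpoints in $t_0+\ell\mathbb{Z}$. If $t\in F_j$, then the midpoint of $I_j$ lies within $(b+\frac12)\ell$ of $t$. At most $2b+2$ such midpoints fit in that window, so $\sum_j\boldsymbol{1}_{F_j}(t)\le 2b+2$.

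Multiplying the two counts gives $B(n,a,b):=(2a+2)^n(2b+2)$, which is independent of $R$, of $\gamma$, and even of $k$ (and hence of $p$). This is consistent with the stated dependence only on $n,a,b$. There is no real obstacle in the argument. The one point to handle carefully is that the temporal pieces all have equal length, which holds because the construction of $\mathcal{D}_1$ partitions the time edge into $k$ \emph{equally long} intervals. That equality is what makes the lattice-counting step legitimate. Note that property \ref{(II)} alone, which only gives comparable lengths within a factor $4$, would also suffice, at the cost of a worse constant.
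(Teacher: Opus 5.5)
Your argument is correct and is essentially the paper's proof. The paper likewise writes each $P\in\mathcal{D}_1(R^-(\gamma))$ as a product of grid intervals $I_{m_1}(v_1,l(R)/2^4)\times\cdots\times J_{m_{n+1}}(v_{n+1},l(R)/2^4)$ and counts the admissible indices coordinate by coordinate, obtaining $(\lceil1+2a\rceil+1)^n(\lceil1+4b\rceil+1)$. Its temporal count is slightly cruder than yours because it uses the threshold $b\,l^p$ instead of $b|J_m|$ and then $\alpha\le\frac12$. Your closing aside is loosely phrased: without exact equality of the temporal lengths you would also need the disjointness in property (I) and a packing argument in place of lattice counting. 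This does not affect the proof itself.
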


\begin{proof}
We first make the following two observations. Let $v_0,v\in\mathbb{R}$ and
$l\in(0,\infty)$. For any $m\in\mathbb{Z}$, define
$I_m(v_0,l):=[v_0+ml,v_0+(m+1)l)$. If there exists $m\in\mathbb{Z}$ such that
$\mathrm{dist}(v,I_m(v_0,l))<al$, then
$\frac{v-v_0}{l}-1-a<m<\frac{v-v_0}{l}+a$. Thus,
\begin{align}\label{20260803.1743}
\#\{m\in\mathbb{Z}:\mathrm{dist}(v,I_m(v_0,l))<al\}\leq\lceil1+2a\rceil+1.
\end{align}
Analogously, for any $m\in\mathbb{Z}$, define
\begin{align*}
J_m(v_0,l):=\left[v_0+m(1-\alpha)l^p,v_0+(m+1)(1-\alpha)l^p\right),
\end{align*}
where $\alpha$ is the same as in \ref{(II)}. If there exists $m\in\mathbb{Z}$
such that $\mathrm{dist}(v,J_m(v_0,l))<bl^p$, then
\begin{align*}
\frac{v-v_0}{(1-\alpha)l^p}-1-\frac{b}{1-\alpha}<m
<\frac{v-v_0}{(1-\alpha)l^p}+\frac{b}{1-\alpha}.
\end{align*}
This, together with the
fact that $\alpha\in[0,\frac12]$, further implies that
\begin{align}\label{20260803.1746}
\#\{m\in\mathbb{Z}:\mathrm{dist}(v,J_m(v_0,l))<bl^p\}\leq\lceil1+4b\rceil+1.
\end{align}

Now, let $R\in\mathcal{R}$. Note that there exist
$\{v_i\}_{i\in\mathbb{N}\cap[1,n+1]}$ such that
$$R^-(\gamma)=[v_1,v_1+l(R))\times\cdots\times[v_n,v_n+l(R))\times
\left[v_{n+1},v_{n+1}+(1-\gamma)[l(R)]^p\right)$$
and, for any
$P\in\mathcal{D}_1(R^-(\gamma))$, there exist
$\{m_i\}_{i\in\mathbb{N}\cap[1,n+1]}$ such that
$$P=I_{m_1}\left(v_1,\frac{l(R)}{2^4}\right)\times\cdots\times
I_{m_n}\left(v_n,\frac{l(R)}{2^4}\right)\times J_{m_{n+1}}\left(v_{n+1},\frac{l(R)}{2^4}\right).$$
Combining this, \eqref{20260803.1743}, and \eqref{20260803.1746},
we conclude that
\begin{align*}
\sum_{P\in\mathcal{D}_1(R^-(\gamma))}\boldsymbol{1}_{\Omega_{a,b}(P)}\leq
\left(\lceil1+2a\rceil+1\right)^n\left(\lceil1+4b\rceil+1\right),
\end{align*}
thereby completing the proof of Lemma \ref{finite overlapping} with $B(n,a,b):=
(\lceil1+2a\rceil+1)^n(\lceil1+4b\rceil+1)$.
\end{proof}

Now, we establish the uniform parabolic space-time shifting property of
parabolic reverse H\"older weights, which enables simultaneous spatial and
forward in time translations of such weights, at the cost of a controlled
shift. This is a key ingredient in the proof of Theorem \ref{question 4.6}. In
what follows, for any $i\in\mathbb{N}\cap[1,n]$, let $e_i$ denote the unit
vector in the $i$-th coordinate direction.

\begin{theorem}\label{shifting}
Let $q\in(1,\infty]$ and $w\in RH_q^+$. Then there exist $\nu\in(0,\frac14)$
and $\Theta,C(n,p,q,[w]_{RH_q^+})\in(0,\infty)$, all depending only on $n$,
$p$, $q$, and $[w]_{RH_q^+}$, such that, for any $\gamma\in[0,\frac12]$,
$R\in\mathcal{R}$, $i\in\mathbb{N}\cap[1,n]$, and $\sigma\in\{-1,\,1\}$,
\begin{align}\label{20260804.1606}
w\left(R^-(\gamma)\right)\leq C\left(n,p,q,[w]_{RH_q^+}\right)
w\left(R^-(\gamma)+\left(\sigma\nu l(R)e_i,\Theta[l(R)]^p\right)\right).
\end{align}
\end{theorem}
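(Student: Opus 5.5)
We reduce the space-time shift to many small pieces, each shifted using the reverse H\"older condition at a smaller scale. Fix $\gamma\in[0,\frac12]$, $R\in\mathcal{R}$, $i$ and $\sigma$, and decompose $R^-(\gamma)$ by the parabolic dyadic lattice $\mathcal{D}_1(R^-(\gamma))$. By \ref{(I)} and \ref{(II)}, each $P\in\mathcal{D}_1(R^-(\gamma))$ has the following properties: it has spatial edge $l(R)/2^4$; its temporal length is comparable to $(1-\gamma)[l(R)]^p2^{-4p}$; and it equals $R_P^-(\alpha)$ for some $R_P\in\mathcal{R}$ with $\alpha\in[0,\frac12]$ independent of $R$. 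Hence
\[
w(R^-(\gamma))=\sum_P w(R_P^-(\alpha)).
\]

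For each $P$, we use the reverse H\"older condition to move mass forward. The natural tool is Lemma \ref{Kim lemma 1 cor}(ii) (with $\gamma^*:=\frac12$), combined with the fact that $R_P^+(\alpha)$ sits above $R_P^-(\alpha)$. To obtain a spatial shift, we view $P$ as the lower half of a slightly larger parabolic rectangle $\widetilde R_P$. This rectangle has spatial edge $(1+2\nu')l(P)$ and time lag chosen in $[0,\frac34]$, and its spatial cube contains both $\mathrm{pr}_x(P)$ and $\mathrm{pr}_x(P)+\sigma\nu l(R)e_i$. Here $\nu=\nu' 2^{-4}$ is small, so the enlarged rectangle still has lag bounded away from $1$.

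Applying Lemma \ref{Kim lemma 1 cor}(i) with $E=P\subset\widetilde R_P^-$ would run the wrong way. Instead, we use the reverse H\"older inequality directly:
\[
w(P)\le |P|^{1/q'}\Big(\int_{\widetilde R_P^-}w^q\Big)^{1/q}\lesssim w(\widetilde R_P^+).
\]
Then $\widetilde R_P^+$ is covered by boundedly many shifted copies of $\widetilde R^-_P$-type pieces. Each such piece is contained in, or comparable to, the target box $P+(\sigma\nu l(R)e_i,\theta_P[l(R)]^p)$.

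To make the target time shift uniform, equal to $\Theta[l(R)]^p$ rather than $\theta_P$, we then use Lemma \ref{Kim lemma 1 cor}(ii). This lemma pushes further forward in time by any amount up to a fixed multiple of $l(P)^p$, at a constant cost. We choose $\Theta$ to be a fixed multiple of $2^{-4p}$ exceeding all the $\theta_P$; this is possible because $\theta_P/l(P)^p$ is bounded uniformly. The shifts remain within $[0,\Theta']$ relative to $l(P)^p$, uniformly in $\gamma\le\frac12$, thanks to the uniformity in Lemma \ref{Kim lemma 1}.

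Summing over $P$, we obtain
\[
w(R^-(\gamma))\lesssim\sum_P w(\Omega_P),
\]
where each $\Omega_P\subset\Omega_{a,b}(P)+(\sigma\nu l(R)e_i,\Theta[l(R)]^p)$ for fixed $a,b$. Each $\Omega_P$ also lies inside $R^-(\gamma)+(\sigma\nu l(R)e_i,\Theta[l(R)]^p)$, possibly after shrinking the pieces so they do not exit the target box. By Lemma \ref{finite overlapping}, the sum is at most $B(n,a,b)\,w(R^-(\gamma)+(\sigma\nu l(R)e_i,\Theta[l(R)]^p))$, which gives \eqref{20260804.1606}.

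The main obstacle is the containment: the shifted enlarged pieces coming from boundary cells $P$ may stick out of the target rectangle, both spatially and temporally. I would handle this first by taking the target pieces strictly inside the shifted copy of $P$, using a sub-rectangle of $\widetilde R_P^+$ of comparable measure, so that they stay inside the shifted copy of $P$ itself. The spatial shift $\nu l(R)\le l(P)/4$ fits within a single cell, and the temporal shift is the same for all $P$. Carrying this out requires carefully choosing $\nu<\frac14$ and the enlarged lag so that the shifted copies tile the target box with bounded overlap.
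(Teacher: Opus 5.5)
There is a genuine gap, and it is exactly the containment step you flag as the main obstacle.

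\textbf{Why the step fails.} Everything you can extract from $RH_q^+$ (the definition, or Lemma~\ref{Kim lemma 1 cor}(i) and (ii)) moves mass forward in time over the \emph{same} spatial cube. If $\widetilde R_P^-\supset P$, then
$\mathrm{pr}_x(\widetilde R_P^+)=\mathrm{pr}_x(\widetilde R_P^-)\supset\mathrm{pr}_x(P)\cup(\mathrm{pr}_x(P)+\sigma\nu l(R)e_i)$.
So the bound $w(P)\lesssim w(\widetilde R_P^+)$ always carries an overhang on the side opposite to $\sigma e_i$. (That bound is just Lemma~\ref{Kim lemma 1 cor}(i) with $E=P$; the lemma does not run the wrong way.)

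Take the cells $P$ in the first layer of $R^-(\gamma)$ in the direction $-\sigma e_i$. For them the overhang is a slab of width $\nu l(R)$ lying outside $\mathrm{pr}_x(R)+\sigma\nu l(R)e_i$. It is therefore outside the target for every time shift. Nothing in $RH_q^+$ prevents these cells from carrying most of $w(R^-(\gamma))$.

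Your fix is to pass to a sub-rectangle of $\widetilde R_P^+$ of comparable measure inside the shifted copy of $P$. That needs $w(\widetilde R_P^+)\lesssim w(\text{sub-rectangle})$, a doubling-type lower bound on the forward side. $RH_q^+$ does not give this, because comparable measure is not comparable weight. A property of this kind, \eqref{20260730.1813}, is what the paper \emph{derives from} Theorem~\ref{shifting}, so assuming it is circular. Likewise, Lemma~\ref{finite overlapping} can only bound $\sum_P w$ of the enlarged pieces by the weight of a region strictly larger than the target. So your final summation does not yield \eqref{20260804.1606}.

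\textbf{The missing idea.} You need to pay for the overhang with a small factor and then absorb it.
\begin{enumerate}
\item The paper writes $R^+(\gamma)\subset S_\nu^+(\gamma)\cup E_\nu$, where $S_\nu^+(\gamma):=R^+(\gamma)+(\nu l(R)e_i,0)$ and $|E_\nu|=\nu|R^+(\gamma)|$.
\item $R^+(\gamma)$ is the lower part of a parabolic rectangle whose upper part is $R^{++}(\gamma)$. So Lemma~\ref{Kim lemma 1 cor}(i) gives $w(E_\nu)\le C_1\nu^{1/q'}w(R^{++}(\gamma))$, with $C_1$ the constant of that lemma for $\gamma^*=\frac12$.
\item Hence $w(R^-(\gamma))\le C_1w(S_\nu^+(\gamma))+C_1^2\nu^{1/q'}w(R^{++}(\gamma))$.
\item The term $w(R^{++}(\gamma))$ is again unshifted. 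It is handled by induction on the error-carrying inequality $w(R^-(\gamma))\le D_kw(R^-(\gamma)+(\nu l(R)e_i,\Theta_k[l(R)]^p))+\varepsilon_kw(\Omega_{1,b_0}(R^-(\gamma)))$. This inequality is applied $2^4$ times in succession to each cell of $\mathcal D_1(R^{++}(\gamma))$ at scale $l(R)/2^4$; this is where uniformity in the lag $\alpha\in[0,\frac12]$ is needed.
\item Lemma~\ref{finite overlapping} is used only to sum the \emph{error} terms into $w(\Omega_{1,b_0}(R^-(\gamma)))$, never to bound anything by the target.
\item Choosing $\nu$ small keeps $D_k$ bounded and gives $\varepsilon_k\le 2^{-k}$. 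Letting $k\to\infty$ then removes the enlarged region.
\end{enumerate}
Without the factor $\nu^{1/q'}$ and this multiscale absorption, your argument does not close.
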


\begin{proof}
Let $\gamma\in[0,\frac12]$, $R\in\mathcal{R}$, $i\in\mathbb{N}\cap[1,n]$, and
$\sigma\in\{-1,\,1\}$. Without loss of generality, we may assume that
$\sigma:=1$, since the proof in the case $\sigma=-1$ is entirely similar.
Let $\nu\in(0,\frac14)$ be determined later. Define
$S_\nu^+(\gamma):=R^+(\gamma)+(\nu l(R)e_i,0)$ and
$E_\nu:=R^+(\gamma)\setminus S^+(\gamma)$.
We first prove that
\begin{align}\label{20260803.2206}
w\left(R^-(\gamma)\right)&\leq C\left(n,p,q,\frac12,[w]_{RH_q^+}\right)
w\left(S_\nu^+(\gamma)\right)\notag\\
&\quad+\left[C\left(n,p,q,\frac12,[w]_{RH_q^+}\right)\right]
^2\nu^\frac{1}{q'}w\left(R^{++}(\gamma)\right),
\end{align}
where $C(n,p,q,\frac12,[w]_{RH_q^+})$ is the same as in \eqref{20260730.2216}
with $\gamma^*:=\frac12$ therein. Indeed, from Lemma \ref{Kim lemma 1 cor}(i)
with $\gamma^*:=\frac12$ and $E:=R^-(\gamma)$ therein, it follows that
\begin{align}\label{20260803.2212}
w\left(R^-(\gamma)\right)\leq C\left(n,p,q,\frac12,[w]_{RH_q^+}\right)
w\left(R^+(\gamma)\right).
\end{align}
Moreover, note that $E_\nu\subset R^+(\gamma)$ and $|E_\nu|=\nu|R^+(\gamma)|$.
These, together with Lemma \ref{Kim lemma 1 cor}(i) with $\gamma^*:=\frac12$
and $E:=E_\nu$ therein, further implies that
\begin{align*}
w(E_\nu)\leq C\left(n,p,q,\frac12,[w]_{RH_q^+}\right)\nu^\frac{1}{q'}
w\left(R^{++}(\gamma)\right).
\end{align*}
Combining this, \eqref{20260803.2212}, and the fact that $R^+(\gamma)\subset
S_\nu^+(\gamma)\cup E_\nu$, we conclude that
\begin{align*}
w\left(R^-(\gamma)\right)&\leq C\left(n,p,q,\frac12,[w]_{RH_q^+}\right)
w\left(R^+(\gamma)\right)\\
&\leq C\left(n,p,q,\frac12,[w]_{RH_q^+}\right)
\left[w\left(S^+(\gamma)\right)+w(E_\nu)\right]\\
&\leq C\left(n,p,q,\frac12,[w]_{RH_q^+}\right)w\left(S_\nu^+(\gamma)\right)\\
&\quad+\left[C\left(n,p,q,\frac12,[w]_{RH_q^+}\right)\right]^2
\nu^\frac{1}{q'}w\left(R^{++}(\gamma)\right),
\end{align*}
and hence \eqref{20260803.2206} holds.

Now, we apply the parabolic dyadic lattice argument to $R^{++}(\gamma)$ and we
obtain a sequence $\{P\}_{P\in\mathcal{D}_1(R^{++}(\gamma))}$. Let
$\Theta_0:=0$ and, for any $k\in\mathbb{Z}_+$, define $\Theta_{k+1}:=3+
\frac{\Theta_k}{2^{4(p-1)}}$. Then, for any $k\in\mathbb{N}$,
$\Theta_k=3\sum_{j=0}^{k-1}\frac{1}{2^{4(p-1)j}}$ and $\Theta_k\uparrow\Theta:=
\frac{3}{1-\frac{1}{2^{4(p-1)}}}$ as $k\to\infty$. Fix
$b_0\in(\frac{3+\frac{\Theta}{2^{4(p-1)}}}{1-\frac{1}{2^{4p}}},\infty)$.
For any $j\in\mathbb{Z}_+\cap[0,2^4]$, $k\in\mathbb{Z}_+$, and
$P\in\mathcal{D}_1(R^{++}(\gamma))$, define
\begin{align*}
P_{j,k}:=P+\left(j\nu\frac{l(R)}{2^4}e_i,
j\Theta_k\left[\frac{l(R)}{2^4}\right]^p\right).
\end{align*}
We show that, for any given $j\in\mathbb{Z}_+\cap[0,2^4-1]$ and
$k\in\mathbb{Z}_+$,
\begin{align}\label{20260804.1415}
\sum_{P\in\mathcal{D}_1(R^{++}(\gamma))}
w\left(\Omega_{1,b_0}\left(P_{j,k}\right)\right)\leq B(n,1,b_0)
w\left(\Omega_{1,b_0}\left(R^-(\gamma)\right)\right),
\end{align}
where $B(n,1,b_0)$ is the same as in Lemma \ref{finite overlapping}.
To this end, we first observe that, for any $j\in\mathbb{Z}_+\cap[0,2^4-1]$,
$k\in\mathbb{Z}_+$, and $P\in\mathcal{D}_1(R^{++}(\gamma))$, it holds that
$\Omega_{1,b_0}(P_{j,k})\subset\Omega_{1,b_0}(R^-(\gamma))$. Indeed, let
$(x,t)\in\Omega_{1,b_0}(P_{j,k})$. Since $\nu<\frac14$, it follows that
\begin{align*}
\mathrm{dist}_\infty\left(x,\mathrm{pr}_x\left(R^-(\gamma)\right)\right)
&\leq\mathrm{dist}_\infty\left(x,\mathrm{pr}_x\left(P_{j,k}\right)\right)+
\mathrm{dist}_\infty\left(\mathrm{pr}_x\left(P_{j,k}\right),
\mathrm{pr}_x\left(R^-(\gamma)\right)\right)\\
&<\frac{l(R)}{2^4}+\nu l(R)<l(R).
\end{align*}
Furthermore, by the assumptions that $\gamma\leq\frac12$ and
$b_0>\frac{3+\frac{\Theta}{2^{4(p-1)}}}{1-\frac{1}{2^{4p}}}$, we obtain
\begin{align*}
\mathrm{dist}\left(t,\mathrm{pr}_t\left(R^-(\gamma)\right)\right)
&\leq\mathrm{dist}\left(t,\mathrm{pr}_t\left(P_{j,k}\right)\right)
+\mathrm{dist}\left(\mathrm{pr}_t\left(P_{j,k}\right),
\mathrm{pr}_t\left(R^-(\gamma)\right)\right)\\
&<b_0\left[\frac{l(R)}{2^4}\right]^p+2(1+\gamma)[l(R)]^p+
2^4\Theta\left[\frac{l(R)}{2^4}\right]^p\\
&\leq\left[3+\frac{\Theta}{2^{4(p-1)}}+\frac{b_0}{2^4}\right][l(R)]^p
<b_0[l(R)]^p.
\end{align*}
Therefore, $(x,t)\in\Omega_{1,b_0}(R^-(\gamma))$, and hence
$\Omega_{1,b_0}(P_{j,k})\subset\Omega_{1,b_0}(R^-(\gamma))$. This, together with
the fact that $\{P_{j,k}:P\in\mathcal{D}_1(R^{++}(\gamma))\}$ are pairwise
disjoint [see \ref{(I)}] and Lemma \ref{finite overlapping}, further implies
that, for any given $j\in\mathbb{Z}_+\cap[0,2^4-1]$ and $k\in\mathbb{Z}_+$,
\begin{align*}
\sum_{P\in\mathcal{D}_1(R^{++}(\gamma))}
w\left(\Omega_{1,b_0}\left(P_{j,k}\right)\right)
&=\int_{\Omega_{1,b_0}(R^-(\gamma))}\sum_{P\in\mathcal{D}_1(R^{++}(\gamma))}
\boldsymbol{1}_{\Omega_{1,b_0}(P_{j,k})}w\\
&\leq B(n,1,b_0)w\left(\Omega_{1,b_0}\left(R^-(\gamma)\right)\right),
\end{align*}
and hence \eqref{20260804.1415} holds.

Next, we prove by induction that, for any $k\in\mathbb{Z}_+$, there exist
$D_k,\varepsilon_k\in[0,\infty)$, both independent of $\gamma$ and $R$,
such that
\begin{align}\label{20260804.1446}
w\left(R^-(\gamma)\right)\leq D_kw\left(R^-(\gamma)
+\left(\nu l(R)e_i,\Theta_k[l(R)]^p\right)\right)
+\varepsilon_kw\left(\Omega_{1,b_0}\left(R^-(\gamma)\right)\right).
\end{align}
Indeed, if $k=0$, then we can take $D_0:=0$ and $\varepsilon_0:=1$ since
$R^-(\gamma)\subset\Omega_{1,b_0}(R^-(\gamma))$ up to a set of measure zero.
We then assume that \eqref{20260804.1446} holds for some $k\in\mathbb{Z}_+$
and we show that \eqref{20260804.1446} holds for $k+1$. Let
$P\in\mathcal{D}_1(R^{++}(\gamma))$. From \ref{(II)}, we infer that there exist
$\alpha\in[0,\frac12]$ and $\widetilde{P}\in\mathcal{R}$ such that
$P=\widetilde{P}^-(\alpha)$. Applying this and \eqref{20260804.1446} with $R$
and $\gamma$ therein replaced, respectively, by $\widetilde{P}$ and $\alpha$,
we find that
\begin{align*}
w(P)\leq D_kw\left(P_{1,k}\right)+\varepsilon_kw\left(\Omega_{1,b_0}(P)\right).
\end{align*}
Using \eqref{20260804.1446} again with $R$ and $\gamma$ therein replaced,
respectively, by $\widetilde{P_{1,k}}$ and $\alpha$, we obtain
\begin{align*}
w(P)&\leq D_k\left[D_kw\left(P_{2,k}\right)
+\varepsilon_kw\left(\Omega_{1,b_0}\left(P_{1,k}\right)\right)\right]
+\varepsilon_kw\left(\Omega_{1,b_0}(P)\right)\\
&=D_k^2w\left(P_{2,k}\right)+\varepsilon_k\left[\sum_{j=0}^1D_k^j
w\left(\Omega_{1,b_0}\left(P_{j,k}\right)\right)\right].
\end{align*}
Proceeding in this way, we conclude that
\begin{align*}
w(P)\leq D_k^{2^4}w\left(P+\left(\nu l(R)e_i,
2^4\Theta_k\left[\frac{l(R)}{2^4}\right]^p\right)\right)
+\varepsilon_k\left[\sum_{j=0}^{2^4-1}D_k^j
w\left(\Omega_{1,b_0}\left(P_{j,k}\right)\right)\right].
\end{align*}
Summing over all $P\in\mathcal{D}_1(R^{++}(\gamma))$ and using
\eqref{20260804.1415} and the fact $\{P\}_{P\in\mathcal{D}_1(R^{++}(\gamma))}$
are pairwise disjoint, we find that
\begin{align}\label{20260804.1512}
w\left(R^{++}(\gamma)\right)
&=\sum_{P\in\mathcal{D}_1(R^{++}(\gamma))}w(P)\notag\\
&\leq\sum_{P\in\mathcal{D}_1(R^{++}(\gamma))}
\left\{D_k^{2^4}w\left(P+\left(\nu l(R)e_i,
2^4\Theta_k\left[\frac{l(R)}{2^4}\right]^p\right)\right)\right.\notag\\
&\quad\left.+\varepsilon_k\left[\sum_{j=0}^{2^4-1}D_k^j
w\left(\Omega_{1,b_0}\left(P_{j,k}\right)\right)\right]\right\}\notag\\
&\leq D_k^{2^4}w\left(R^{++}(\gamma)+\left(\nu l(R)e_i,
2^4\Theta_k\left[\frac{l(R)}{2^4}\right]^p\right)\right)\notag\\
&\quad+\varepsilon_kB(n,1,b_0)\left(\sum_{j=0}^{2^4-1}D_k^j\right)
w\left(R^-(\gamma)\right).
\end{align}
Moreover, note that
\begin{align*}
R^-(\gamma)+\left(\nu l(R)e_i,\Theta_{k+1}[l(R)]^p\right)
=R^{++}(\gamma)+\left(\nu l(R)e_i,
2^4\Theta_k\left[\frac{l(R)}{2^4}\right]^p\right)
+\left(\mathbf{0},(1-2\gamma)[l(R)]^p\right),
\end{align*}
\begin{align*}
R^-(\gamma)+\left(\nu l(R)e_i,\Theta_{k+1}[l(R)]^p\right)
=S_\nu^+(\gamma)+\left(\mathbf{0},\left[\Theta_{k+1}-(1+\gamma)\right]
[l(R)]^p\right),
\end{align*}
and $\max\{1-2\gamma,\,\Theta_{k+1}-(1+\gamma)\}\leq\Theta+3$. These, together
with Lemma \ref{Kim lemma 1 cor}(ii) with $\gamma^*$ and $\Theta$ therein
replaced, respectively, by $\frac12$ and $\Theta+3$, further implies that
\begin{align*}
&\max\left\{w\left(R^{++}(\gamma)+\left(\nu l(R)e_i,
2^4\Theta_k\left[\frac{l(R)}{2^4}\right]^p\right)\right),\,
w\left(S_\nu^+(\gamma)\right)\right\}\\
&\quad\leq C\left(n,p,q,\frac12,\Theta+3,[w]_{RH_q^+}\right)
w\left(R^-(\gamma)+\left(\nu l(R)e_i,\Theta_{k+1}[l(R)]^p\right)\right).
\end{align*}
Combining this, \eqref{20260804.1512}, and \eqref{20260803.2206},
we conclude that
\begin{align*}
w\left(R^-(\gamma)\right)
&\leq\left(C_1C_2+C_1^2\nu^\frac{1}{q'}D_k^{2^4}C_2\right)
w\left(R^-(\gamma)+\left(\nu l(R)e_i,\Theta_{k+1}[l(R)]^p\right)\right)\\
&\quad+C_1^2\nu^\frac{1}{q'}\varepsilon_kB(n,1,b_0)
\left(\sum_{j=0}^{2^4-1}D_k^j\right)
w\left(\Omega_{1,b_0}\left(R^-(\gamma)\right)\right),
\end{align*}
where $C_1:=C(n,p,q,\frac12,[w]_{RH_q^+})$ and
$C_2:= C(n,p,q,\frac12,\Theta+3,[w]_{RH_q^+})$. Thus, \eqref{20260804.1446}
holds for $k+1$ with $D_{k+1}:=C_1C_2+C_1^2\nu^\frac{1}{q'}D_k^{2^4}C_2$
and $\varepsilon_{k+1}:=C_1^2\nu^\frac{1}{q'}\varepsilon_kB(n,1,b_0)
(\sum_{j=0}^{2^4-1}D_k^j)$.

Finally, we determine $\nu\in(0,\frac14)$ and complete the proof of
Theorem \ref{shifting}. Specifically, choose $\nu\in(0,\frac14)$ such that
\begin{align}\label{20260804.1552}
C_1^2\nu^\frac{1}{q'}(2C_1C_2)^{2^4}C_2\leq C_1C_2\mbox{\ \ and\ \ }
C_1^2\nu^\frac{1}{q'}B(n,1,b_0)2^4\max\left\{1,\,2C_1C_2\right\}^{2^4-1}
\leq\frac12.
\end{align}
We prove that $D_k\leq2C_1C_2$ and $\varepsilon_k\leq\frac{1}{2^k}$
for any $k\in\mathbb{Z}_+$ by induction. Indeed, $D_0=0\leq2C_1C_2$
and $\varepsilon_0=1\leq\frac{1}{2^0}$. Suppose that $D_k\leq2C_1C_2$ and
$\varepsilon_k\leq\frac{1}{2^k}$ for some $k\in\mathbb{Z}_+$.
From the first inequality in \eqref{20260804.1552}, we deduce that
\begin{align*}
D_{k+1}=C_1C_2+C_1^2\nu^\frac{1}{q'}D_k^{2^4}C_2
\leq C_1C_2+C_1^2\nu^\frac{1}{q'}(2C_1C_2)^{2^4}C_2\leq2C_1C_2.
\end{align*}
Moreover, by the second inequality in \eqref{20260804.1552}, we obtain
\begin{align*}
\varepsilon_{k+1}&=C_1^2\nu^\frac{1}{q'}B(n,1,b_0)
\left(\sum_{j=0}^{2^4-1}D_k^j\right)\\
&\leq C_1^2\nu^\frac{1}{q'}B(n,1,b_0)2^4\max\left\{1,\,2C_1C_2\right\}^{2^4-1}
\leq\frac12\varepsilon_k\leq\frac{1}{2^{k+1}}.
\end{align*}
Thus, for any $k\in\mathbb{Z}_+$, $D_k\leq2C_1C_2$ and
$\varepsilon_k\leq\frac{1}{2^k}$. On the other hand, observe that
$\Theta_k\uparrow\Theta$ as $k\to\infty$. This, together with the case
$\gamma^*:=\frac12$ of Lemma \ref{Kim lemma 1 cor}(ii),
further implies that, for any $k\in\mathbb{Z}_+$,
\begin{align*}
w\left(R^-(\gamma)+\left(\nu l(R)e_i,\Theta_k[l(R)]^p\right)\right)
\leq C_3w\left(R^-(\gamma)+\left(\nu l(R)e_i,\Theta[l(R)]^p\right)\right),
\end{align*}
where $C_3:=C(n,p,q,\frac12,\Theta,[w]_{RH_q^+})$ is the same as in
Lemma \ref{Kim lemma 1 cor}(ii). Combining this, \eqref{20260804.1446},
and the proven conclusion that $D_k\leq2C_1C_2$ and
$\varepsilon_k\leq\frac{1}{2^k}$ for any $k\in\mathbb{Z}_+$,
we obtain, for any $k\in\mathbb{Z}_+$,
\begin{align*}
w\left(R^-(\gamma)\right)&\leq2C_1C_2w\left(R^-(\gamma)
+\left(\nu l(R)e_i,\Theta_k[l(R)]^p\right)\right)
+\frac{1}{2^k}w\left(\Omega_{1,b_0}\left(R^-(\gamma)\right)\right)\\
&\leq2C_1C_2C_3w\left(R^-(\gamma)
+\left(\nu l(R)e_i,\Theta[l(R)]^p\right)\right)
+\frac{1}{2^k}w\left(\Omega_{1,b_0}\left(R^-(\gamma)\right)\right).
\end{align*}
Taking the limit $k\to\infty$ and applying the fact that
$w(\Omega_{1,b_0}(R^-(\gamma)))<\infty$, we conclude that \eqref{20260804.1606}
holds with $\nu$ satisfying \eqref{20260804.1552},
$\Theta:=\frac{3}{1-\frac{1}{2^{4(p-1)}}}$, and
$C(n,p,q,[w]_{RH_q^+}):=2C_1C_2C_3$.
This completes the proof of Theorem \ref{shifting}.
\end{proof}

\begin{remark}
Let all the notation be as in Theorem \ref{shifting}. We point out that the
upper bound $\frac12$ for $\gamma$ in Theorem \ref{shifting} is not
essential. Indeed, for any fixed $\gamma^* \in (0, 1)$, the conclusion of
Theorem \ref{shifting} holds uniformly for all $\gamma\in[0,\gamma^*]$,
with the constants $v$, $\Theta$, and $C(n,p,q,[w]_{RH_q^+})$ additionally
depending on $\gamma^*$. This extension can be achieved via a straightforward
adaptation of the dyadic lattice construction of parabolic rectangles.
We omit the details.
\end{remark}

Now, we are ready to show Theorem \ref{question 4.6}.

\begin{proof}[Proof of Theorem \ref{question 4.6}]
From Lemma \ref{Kim lem 2}, we infer that, to prove Theorem
\ref{question 4.6}, it remains to show that, if $w\in RH_q^+$ for some
$q\in(1,\infty]$, then $w$ satisfies $\eqref{20260730.1813}$.
To do this, let $R:=R(x,t,L)\in\mathcal{R}$ with $(x,t)\in\mathbb{R}^{n+1}$
and $L\in(0,\infty)$. Note that
\begin{align}\label{20260804.1749}
\inf\mathrm{pr}_t\left(\frac12R^+(\gamma)\right)
-\sup\mathrm{pr}_t\left(R^-(\gamma)\right)
&=\left[t+\frac{1+\gamma}{2}L^p-\frac{1-\gamma}{2}\frac{L^p}{2^p}\right]
-\left(t-\gamma L^p\right)\notag\\
&=\left(\frac{1+3\gamma}{2}-\frac{1-\gamma}{2^{p+1}}\right)L^p>0.
\end{align}

Now, we cover $R^-(\gamma)$ by sufficiently small half parabolic rectangles.
Specifically, let $\nu$ and $\Theta$ be the same as in Theorem \ref{shifting}.
For any $N\in\mathbb{N}$, define $M_N:=n(\lceil\frac{N}{2\nu}\rceil+1)$. Then
\begin{align*}
\frac{M_N}{N^p}=\frac{n(\lceil\frac{N}{2\nu}\rceil+1)}{N^p}
\leq\frac{n}{2\nu N^{p-1}}+\frac{2n}{N^p}\to0\mbox{\ \ as\ \ }N\to\infty.
\end{align*}
Thus, we can fix $N:=N(n,p,q,\gamma,[w]_{RH_q^+})\in\mathbb{N}$ such that
\begin{align}\label{20260804.1754}
N\geq4,\ \ \frac{1}{N^p}\leq\frac{1-\gamma}{2^{p+1}},\mbox{\ \ and\ \ }
\frac{\Theta M_N}{N^p}\leq\frac12
\left(\frac{1+3\gamma}{2}-\frac{1-\gamma}{2^{p+1}}\right).
\end{align}
Let $l:=\frac{L}{N}$. Partition each spatial edge of $R^-(\gamma)$ into
$N$ equally long half-open intervals with length $l$. Then we obtain a sequence
$\{Q_j\}_{j\in\mathbb{N}\cap[1,N^n]}$ of subcubes of $Q(x,L)$. In addition,
$l(Q_j)=l$ for any $j\in\mathbb{N}\cap[1,N^n]$ and
$Q(x,L)=\bigcup_{j=1}^{N^n}Q_j$. On the other hand, by the second inequality in
\eqref{20260804.1754}, we obtain $l^p=\frac{L^p}{N^p}\leq
\frac{(1-\gamma)L^p}{2^{p+1}}<\frac{(1-\gamma)L^p}{2^p}<l_t(R^-(\gamma))$.
Therefore, we can cover the temporal edge of $R^-(\gamma)$ by
$K:=\lceil(1-\gamma)N^p\rceil$ half-open intervals of equal length $l^p$, with
bounded overlap 2. Then we obtain a sequence
$\{J_k\}_{k\in\mathbb{N}\cap[1,K]}$ of subintervals of the temporal
edge of $R^-(\gamma)$. Moreover, $|J_k|=l^p$ for any $k\in\mathbb{N}\cap[1,K]$.
For any $j\in\mathbb{N}\cap[1,N^n]$ and $k\in\mathbb{N}\cap[1,K]$, define
$P_{j,k}:=Q_j\times J_k$. Then there exists $R_{j,k}\in\mathcal{R}$ such that
$P_{j,k}=R_{j,k}^-(0)$. Moreover, $R^-(\gamma)=
\bigcup_{j=1}^{N^n}\bigcup_{k=1}^KP_{j,k}$ and $N^nK\leq2N^{n+p}$.

Next, we translate each $P_{j,k}$ into $\frac12R^+(\gamma)$. For any
$j\in\mathbb{N}\cap[1,N^n]$, denote the center of $Q_j$ by
$y_j:=(y_j^{(1)},\dots,y_j^{(n)})\in\mathbb{R}^n$ and assume that
$x=(x^{(1)},\dots,x^{(n)})$. For any $i\in\mathbb{N}\cap[1,n]$, choose
$m_j^{(i)}\in\mathbb{Z}$ such that
\begin{align}\label{20260804.1814}
\left|y_j^{(i)}+m_j^{(i)}\nu l-x^{(i)}\right|\leq\frac{\nu l}{2}.
\end{align}
This can be realized since \eqref{20260804.1814} is equivalent to
\begin{align*}
\frac{x^{(i)}-y_j^{(i)}}{\nu l}-\frac12\leq m_j^{(i)}\leq
\frac{x^{(i)}-y_j^{(i)}}{\nu l}+\frac12,
\end{align*}
which implies that $m_j^{(i)}$ belongs to a closed interval with length 1.
Moreover, for any $i\in\mathbb{N}\cap[1,n]$,
\begin{align*}
\left|m_j^{(i)}\right|\leq\frac{|x^{(i)}-y_j^{(i)}|}{\nu l}+\frac12
\leq\frac{L}{2\nu l}+\frac12=\frac{N}{2\nu}+\frac12.
\end{align*}
For any $j\in\mathbb{N}\cap[1,N^n]$, define $M_j:=\sum_{i=1}^n|m_j^{(i)}|$. Then
\begin{align}\label{20260804.1823}
M_j\leq\sum_{i=1}^n\left(\frac{N}{2\nu}+\frac12\right)
\leq n\left(\left\lceil\frac{N}{2\nu}\right\rceil+1\right)=M_N.
\end{align}
Also define the spatial translation vector $v_j:=\sum_{i=1}^nm_j^{(i)}\nu le_i$.
By sequentially translating along the $i$-th coordinate direction
$|m_j^{(i)}|$ times for any $i\in\mathbb{N}\cap[1,n]$, after $M_j$
translations in total, we can move $P_{j,k}$ to
\begin{align*}
\widetilde{P}_{j,k}:=P_{j,k}+\left(v_j,M_j\Theta l^p\right).
\end{align*}

We verify that, for any $j\in\mathbb{N}\cap[1,N^n]$ and
$k\in\mathbb{N}\cap[1,K]$,
\begin{align}\label{20260804.2054}
\mathrm{pr}_x\left(\widetilde{P}_{j,k}\right)\subset
Q\left(x,\frac{L}{2}\right)\mbox{\ \ and\ \ }
\sup\mathrm{pr}_t\left(\widetilde{P}_{j,k}\right)\leq
\inf\frac12R^+(\gamma).
\end{align}
Indeed, let $j\in\mathbb{N}\cap[1,N^n]$ and $k\in\mathbb{N}\cap[1,K]$.
Note that
$$\mathrm{pr}_x(\widetilde{P}_{j,k})=Q_j+v_j
=Q_j+\sum_{i=1}^nm_j^{(i)}\nu le_i.$$
From this, \eqref{20260804.1814},
$\nu<\frac14$, and the first inequality in \eqref{20260804.1754}, it
follows that, for any $y\in\mathrm{pr}_x(\widetilde{P}_{j,k})$,
\begin{align*}
\|y-x\|_\infty\leq\left\|y-\left(y_j+v_j\right)\right\|_\infty
+\left\|y_j+v_j-x\right\|_\infty\leq\frac{l}{2}+\frac{\nu l}{2}
=\frac{1+\nu}{2N}L<\frac14L.
\end{align*}
Thus, $\mathrm{pr}_x(\widetilde{P}_{j,k})\subset Q(x,\frac{L}{2})$.
Furthermore, $\mathrm{pr}_t(\widetilde{P}_{j,k})=\mathrm{pr}_t(P_{j,k})
+M_j\Theta l^p$. Using this, \eqref{20260804.1823}, the third inequality in
\eqref{20260804.1754}, and \eqref{20260804.1749}, we find that
\begin{align*}
\sup\mathrm{pr}_t\left(\widetilde{P}_{j,k}\right)
&\leq t-\gamma L^p+M_j\Theta l^p\leq t-\gamma L^p+\frac{\Theta M_N}{N^p}L^p\\
&\leq t-\gamma L^p+\frac12\left(\frac{1+3\gamma}{2}-
\frac{1-\gamma}{2^{p+1}}\right)L^p
<\inf\mathrm{pr}_t\left(\frac12R^+(\gamma)\right),
\end{align*}
and hence \eqref{20260804.2054} holds. We then translate $\widetilde{P}_{j,k}$
forward in time into $\frac12R^+(\gamma)$. To this end, we first use the
proven conclusion that $\frac{(1-\gamma)L^p}{2^p}=l_t(\frac12R^+(\gamma))$
to obtain
\begin{align}\label{20260804.2122}
\left[t+\frac{1+\gamma}{2}L^p-\frac{1-\gamma}{2}\frac{L^p}{2^p},
t+\frac{1+\gamma}{2}L^p-\frac{1-\gamma}{2}\frac{L^p}{2^p}+l^p\right)
\subset\mathrm{pr}_t\left(\frac12R^+(\gamma)\right).
\end{align}
In addition, from \eqref{20260804.2054}, we deduce that
$\inf\frac12R^+(\gamma)-\inf\mathrm{pr}_t(\widetilde{P}_{j,k})\geq l^p>0$.
This, together with the fact that $\inf\mathrm{pr}_t(\widetilde{P}_{j,k})\geq
t-L^p$, further implies that
\begin{align*}
1\leq\frac{\inf\frac12R^+(\gamma)-\inf\mathrm{pr}_t(\widetilde{P}_{j,k})}{l^p}
\leq\left(1+\frac{1+\gamma}{2}-\frac{1-\gamma}{2^{p+1}}\right)N^p.
\end{align*}
Combining this and Lemma \ref{Kim lemma 1 cor}(ii) with $\gamma^*$ and $\Theta$
therein replaced, respectively, by $\frac12$ and
$(1+\frac{1+\gamma}{2}-\frac{1-\gamma}{2^{p+1}})N^p$, we find that
\begin{align}\label{20260804.2134}
w\left(\widetilde{P}_{j,k}\right)
&\leq C\left(n,p,q,\frac12,\left(1+\frac{1+\gamma}{2}-
\frac{1-\gamma}{2^{p+1}}\right)N^p,[w]_{RH_q^+}\right)\notag\\
&\quad\times w\left(\widetilde{P}_{j,k}+\left(\mathbf{0},\inf\frac12R^+(\gamma)-
\inf\mathrm{pr}_t\left(\widetilde{P}_{j,k}\right)\right)\right).
\end{align}
Moreover, both the first inequality of \eqref{20260804.2054} and
\eqref{20260804.2122} yield
\begin{align*}
\widetilde{P}_{j,k}+\left(\mathbf{0},\inf\frac12R^+(\gamma)-
\inf\mathrm{pr}_t\left(\widetilde{P}_{j,k}\right)\right)
\subset\frac12R^+(\gamma).
\end{align*}
Therefore, we have moved each $P_{j,k}$ into a subrectangle
$\widetilde{P}_{j,k}+(\mathbf{0},\inf\frac12R^+(\gamma)-
\inf\mathrm{pr}_t(\widetilde{P}_{j,k}))$ of $\frac12R^+(\gamma)$.

Finally, by Theorem \ref{shifting}, \eqref{20260804.1823},
\eqref{20260804.2134}, the fact that
$\widetilde{P}_{j,k}+(\mathbf{0},\inf\frac12R^+(\gamma)-
\inf\mathrm{pr}_t(\widetilde{P}_{j,k}))\subset\frac12R^+(\gamma)$,
and the proven conclusion that $N^nK\leq2N^{n+p}$, we conclude that
\begin{align*}
w\left(R^-(\gamma)\right)&\leq\sum_{j=1}^{N^n}\sum_{k=1}^K
w\left(P_{j,k}\right)\leq\sum_{j=1}^{N^n}\sum_{k=1}^K
\left[C\left(n,p,q,[w]_{RH_q^+}\right)\right]^{M_j}
w\left(\widetilde{P}_{j,k}\right)\\
&\leq\left[C\left(n,p,q,[w]_{RH_q^+}\right)\right]^{M_N}
\sum_{j=1}^{N^n}\sum_{k=1}^Kw\left(\widetilde{P}_{j,k}\right)\\
&\leq2N^{n+p}\left[C\left(n,p,q,[w]_{RH_q^+}\right)\right]^{M_N}\\
&\quad\times C\left(n,p,q,\frac12,\left(1+\frac{1+\gamma}{2}-
\frac{1-\gamma}{2^{p+1}}\right)N^p,[w]_{RH_q^+}\right)
w\left(\frac12R^+(\gamma)\right).
\end{align*}
Thus, \eqref{20260730.1813} holds with
\begin{align*}
C_d\left(n,p,q,\gamma,[w]_{RH_q^+}\right)
&:=2N^{n+p}\left[C\left(n,p,q,[w]_{RH_q^+}\right)\right]^{M_N}\\
&\quad\times C\left(n,p,q,\frac12,\left(1+\frac{1+\gamma}{2}-
\frac{1-\gamma}{2^{p+1}}\right)N^p,[w]_{RH_q^+}\right).
\end{align*}
This completes the proof of Theorem \ref{question 4.6}.
\end{proof}

\section{Proof of Theorem \ref{question 4.5}}
\label{section3}

In this section, we show Theorem \ref{question 4.5}. To this end, we first
establish a parabolic John--Nirenberg inequality for functions in
$\mathrm{BMO}^+(\gamma)$, which plays a key role in the proof of Theorem
\ref{question 4.5}. The proof of this inequality relies on a new one-sided
stopping time argument, which differs from those employed in \cite[Theorem
3.1]{kmy(ma-2023)} and \cite[Lemma 1]{mt(jlms-1994)}.

\begin{lemma}\label{J-N for BMO+}
Let $0<\gamma<\alpha<1$. Then there exist positive constants
$A:=A(n,p,\gamma,\alpha)$ and $B:=B(n,p,\gamma,\alpha)$ such that, for any
$f\in\mathrm{BMO}^+(\gamma)$, $R\in\mathcal{R}$, and $\lambda\in(0,\infty)$,
\begin{align}\label{20260806.1546}
\left|R^-(\alpha)\cap\left\{f-f_{R^+(\gamma)}>\lambda
\right\}\right|\leq Ae^{-\frac{B\lambda}{\|f\|_{\mathrm{BMO}^+(\gamma)}}}
\left|R^-(\alpha)\right|.
\end{align}
\end{lemma}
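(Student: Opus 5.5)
We normalize $\|f\|_{\mathrm{BMO}^+(\gamma)}=1$ and fix $R\in\mathcal{R}$. It then suffices to find $\beta=\beta(n,p,\gamma,\alpha)>0$ and $\theta\in(0,1)$ such that, for every $k\in\mathbb{N}$,
\begin{align*}
\left|R^-(\alpha)\cap\left\{f-f_{R^+(\gamma)}>k\beta\right\}\right|\leq\theta^k\left|R^-(\alpha)\right|.
\end{align*}
This gives \eqref{20260806.1546} with $B=\beta^{-1}\ln\theta^{-1}$ and $A=\theta^{-1}$ for the intermediate values of $\lambda$. The basic one-sided estimate we will use repeatedly is the following. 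For any $S\in\mathcal{R}$ and any measurable $E\subset S^-(\gamma)$,
\begin{align*}
\fint_E\left(f-f_{S^+(\gamma)}\right)_+\leq\frac{|S^-(\gamma)|}{|E|}.
\end{align*}
In particular, a mean of $f$ over an \emph{earlier} set is bounded by a mean over a \emph{later} set plus a constant controlled by the ratio of volumes. The whole argument must be arranged so that only comparisons in this forward direction are ever needed.

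The plan is a one-sided Calder\'on--Zygmund stopping time on the parabolic dyadic lattice $\mathcal{D}_1$ iterated inside $R^-(\alpha)$; property \ref{(II)} guarantees that every descendant is again of the form $\widetilde P^-(\alpha')$ with $\alpha'\in[0,\frac12]$. Set $c_0:=f_{R^+(\gamma)}$.

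\emph{Selection.} We select the maximal descendants $P$ of $R^-(\alpha)$ with $\fint_P(f-c_0)_+>\beta$. Since $\fint_{R^-(\alpha)}(f-c_0)_+\leq\frac{1-\gamma}{1-\alpha}$ by the basic estimate, choosing $\beta$ large makes $R^-(\alpha)$ itself unselected. Maximality together with \ref{(I)}--\ref{(II)} (bounded ratio between parent and child) gives $f_P\leq c_0+C\beta$ and $\sum|P|\leq\beta^{-1}\frac{1-\gamma}{1-\alpha}|R^-(\alpha)|$. By Lebesgue differentiation along the lattice, $f-c_0\leq\beta$ a.e.\ outside $\bigcup P$.

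\emph{Iteration.} We restart on each selected $P$, now with reference constant $f_{P^{\sharp}}$, where $P^\sharp$ is the upper half (with lag $\gamma$) of the parabolic rectangle whose lower half (with lag $\gamma$) is a slightly enlarged rectangle \emph{containing} $P$ in its lower portion. The time lag $\alpha>\gamma$ is exactly what we will use to place these rectangles. If we can show $f_{P^\sharp}\leq c_0+C'\beta$, the induction closes with $\theta=C''/\beta<1$.

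\emph{Main obstacle.} The difficulty is this last upper bound: BMO$^+$ only bounds earlier means by later ones, whereas $P^\sharp$ lies \emph{after} $P$. We therefore cannot deduce it from $f_P\leq c_0+C\beta$. What I would try first is to exploit the gap $(\alpha-\gamma)l(R)^p$ between $R^-(\alpha)$ and $R^-(\gamma)$. Take $P^\sharp$ still inside the region where the good-set information $f-c_0\leq\beta$ holds up to the already-selected pieces. Then handle the part of $P^\sharp$ hit by other selected rectangles by ordering the selected family in time (from latest to earliest) and inducting backward in time. In this way the constants on later rectangles are fixed before they are used for earlier ones, and each step only invokes the forward comparison $f_{\mathrm{earlier}}\leq f_{\mathrm{later}}+C$. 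Chaining through at most a bounded number of intermediate rectangles $S_1,\dots,S_m$ with $S_{k+1}^-(\gamma)\supset$ a fixed fraction of $S_k^+(\gamma)$ (bounded via $\alpha-\gamma$) should give the needed $C'$. The finite-overlap bound Lemma \ref{finite overlapping} will control the enlargements when summing the measures. Making this backward-in-time ordering compatible with the dyadic maximality is the step I expect to require the new one-sided stopping time idea.
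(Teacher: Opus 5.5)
\textbf{There is a genuine gap.} Your reduction to geometric decay of the level sets and your basic one-sided estimate are fine, and you locate the crux correctly. But the crux is left unresolved, so this is not yet a proof.

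With a Calder\'on--Zygmund selection on $(f-c_0)_+$ over the dyadic cubes themselves, you know only two things: averages over whole selected cubes, and pointwise bounds on the good part of $R^-(\alpha)$. The next reference constant $f_{P^\sharp}$, however, is an average over a set lying \emph{after} $P$, and neither piece of information controls it.
\begin{itemize}
\item A selected cube $P'$ much larger than $P^\sharp$ may meet $P^\sharp$ in a small portion. There maximality, which bounds only $\fint_{P'}(f-c_0)_+$, says nothing. Your latest-to-earliest induction would then have to descend into the later generations inside $P'$, a regress you have not closed.
\item For $P$ touching the top of $R^-(\alpha)$, $P^\sharp$ lies entirely outside $R^-(\alpha)$. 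Your selection gives no information there, and the only available bound, $\fint_{R^-(\gamma)}(f-c_0)_+\le 1$, is far too weak for small $P^\sharp$.
\end{itemize}
The chaining and the appeal to Lemma \ref{finite overlapping} address neither problem. Also, the lattice $\mathcal{D}_1$ is only defined for lags in $[0,\frac12]$, and its fixed refinement does not adapt to $\alpha-\gamma$.

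\textbf{The missing idea} (the paper's) is to stop on the forward averages themselves.
\begin{itemize}
\item Take a lattice $\mathscr{D}(R^-(\alpha))$ refined by $2^m$ in space, with $2^{pm+1}(\alpha-\gamma)\ge\alpha+3$. For each dyadic $U$, let $\widetilde U\in\mathcal R$ have the same base as $U$ and $l(\widetilde U)=l_x(U)$; in particular $\widetilde{R^-(\alpha)}=R$. This choice of $m$ gives $\widetilde U\subset\widetilde{\pi U}^-(\gamma)$ for the parent $\pi U$, which is exactly where $\alpha>\gamma$ is used.
\item For a current cube $P$, select the maximal $U\subset P$ with $f_{\widetilde U^+(\gamma)}>f_{\widetilde P^+(\gamma)}+C\|f\|_{\mathrm{BMO}^+(\gamma)}$.
\item The new reference constants $f_{\widetilde U^+(\gamma)}$ are then controlled by maximality plus a single \emph{forward} comparison, $f_{\widetilde U^+(\gamma)}\le f_{\widetilde{\pi U}^+(\gamma)}+2^{m(n+p)}\|f\|_{\mathrm{BMO}^+(\gamma)}$. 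This holds because $\widetilde U^+(\gamma)\subset\widetilde{\pi U}^-(\gamma)$.
\item The bound $\sum_U|U|\le\frac12|P|$ comes from a Vitali-type choice of pairwise disjoint $\widetilde U^+(\gamma)$, all contained in $\widetilde P^-(\gamma)$, combined with the $\mathrm{BMO}^+(\gamma)$ condition on $\widetilde P$. This uses averaged information over the ancestor's lower half, not pointwise information from a previous generation.
\item The good-set bound follows from Lebesgue differentiation along $f_{\widetilde{P_i}^+(\gamma)}\to f(x,t)$.
\end{itemize}
With this stopping rule, the obstacle you describe never arises.
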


\begin{proof}
Let $f\in\mathrm{BMO}^+(\gamma)$, $R\in\mathcal{R}$, and
$\lambda\in(0,\infty)$. Without loss of generality, we may assume that
$\|f\|_{\mathrm{BMO}^+(\gamma)}\in(0,\infty)$. We divide the proof of the
present lemma into the following four steps.

\emph{Step 1.} In this step, we decompose $R^-(\alpha)$ into a dyadic lattice
by adopting the rectangular decomposition strategy from \cite[Theorem
3.1]{kmy(ma-2023)}. To do this, let $m$ be the smallest integer such that
$\alpha+3\leq2^{pm+1}(\alpha-\gamma)$, i.e.,
\begin{align}\label{20260807.1632}
\frac1p\log_2\frac{\alpha+3}{2(\alpha-\gamma)}\leq m
<\frac1p\log_2\frac{\alpha+3}{2(\alpha-\gamma)}+1.
\end{align}
Let $\mathscr{D}_0(R^-(\alpha)):=\{R^-(\alpha)\}$. Divide each spatial edge of
$R^-(\alpha)$ into $2^m$ equally long half-open intervals and partition the
temporal edge of $R^-(\alpha)$ into $\lceil2^{pm}\rceil$ equally long half-open
intervals. Then we obtain $k_1:=2^{mn}\lceil2^{pm}\rceil$ half-open
subrectangles of $R^-(\alpha)$ and we denote the collection of these $k_1$
subrectangles by $\mathscr{D}_1(R^-(\alpha))$. For any
$i\in\mathbb{N}\cap[2,\infty)$, we define $\mathscr{D}_i(R^-(\alpha))$ by
induction. Suppose that we have already defined
$\mathscr{D}_{i-1}(R^-(\alpha))$. Then, for any
$P\in\mathscr{D}_{i-1}(R^-(\alpha))$, we divide each spatial edge of $P$
into $2^m$ equally long half-open intervals and partition the temporal edge of
$P$ into $j_2$ equally long half-open intervals, where
\begin{align*}
j_2:=\begin{cases}
\displaystyle
\lfloor2^{pm}\rfloor &\displaystyle\mbox{if\ }
\frac{l_t(P)}{\lfloor2^{pm}\rfloor}<\frac{(1-\alpha)[l(R)]^p}{2^{pmi}},\\
\displaystyle
\lceil2^{pm}\rceil &\displaystyle\mbox{if\ }
\frac{l_t(P)}{\lfloor2^{pm}\rfloor}\geq\frac{(1-\alpha)[l(R)]^p}{2^{pmi}}.
\end{cases}
\end{align*}
Then we obtain $k_i$ half-open subrectangles of $R^-(\alpha)$, and
$\mathscr{D}_i(R^-(\alpha))$ is defined to be the collection of these $k_i$
subrectangles, where $k_i\in\mathbb{N}\cap[2^{mni}\lfloor2^{pm}\rfloor^i,
2^{mni}\lceil2^{pm}\rceil^i]$. Let $\mathscr{D}(R^-(\alpha)):=
\bigcup_{i\in\mathbb{Z}_+}\mathscr{D}_i(R^-(\alpha))$.
We observe that the following assertions hold.
\begin{enumerate}
\item[\rm(i)] For any $i\in\mathbb{Z}_+$,
$\{P\}_{P\in\mathscr{D}_i(R^-(\alpha))}$ are pairwise disjoint and
$R^-(\alpha)=\bigcup_{P\in\mathscr{D}_i(R^-(\alpha))}P$. Moreover, for any
$U,V\in\mathscr{D}(R^-(\alpha))$, $U\cap V\in\{U,\,V,\,\emptyset\}$.

\item[\rm(ii)] For any $i\in\mathbb{Z}_+$ and $P\in\mathscr{D}_i(R^-(\alpha))$,
$l_x(P)=\frac{l(R)}{2^{mi}}$ and $\frac12\frac{(1-\alpha)[l(R)]^p}{2^{pmi}}
\leq l_t(P)\leq\frac{(1-\alpha)[l(R)]^p}{2^{pmi}}$ (which can be proved by
induction). Moreover, there exists a unique $\widetilde{P}\in\mathcal{R}$ such
that the bottoms of $P$ and $\widetilde{P}$ coincide and
\begin{align*}
\frac12(1-\alpha)\left[l\left(\widetilde{P}\right)\right]^p\leq l_t(P)
\leq(1-\alpha)\left[l\left(\widetilde{P}\right)\right]^p.
\end{align*}

\item[\rm(iii)] For any $i\in\mathbb{N}$ and $P\in\mathscr{D}_i(R^-(\alpha))$,
there exists a unique $\pi P\in\mathscr{D}_{i-1}(R^-(\alpha))$ such that
$P\subset\pi P$ and $\widetilde{P}\subset\widetilde{\pi P}^-(\gamma)$, which
can be deduced from the calculation that
\begin{align*}
l_t\left(\widetilde{P}\right)-l_t(P)&\leq\frac{2[l(R)]^p}{2^{pmi}}
-\frac12\frac{(1-\alpha)[l(R)]^p}{2^{pmi}}
=\frac12\frac{(\alpha+3)[l(R)]^p}{2^{pmi}}\\
&\leq\frac{(\alpha-\gamma)[l(R)]^p}{2^{pm(i-1)}}
\leq l_t\left(\widetilde{\pi P}^-(\gamma)\right)-l_t(\pi P).
\end{align*}

\item[\rm(iv)] For almost every $(x,t)\in R^-(\alpha)$ and $i\in\mathbb{Z}_+$,
there exists a unique $P_i\in\mathscr{D}_i(R^-(\alpha))$ such that $(x,t)\in
P_i$ for any $i\in\mathbb{N}$. Moreover, $P_0\supset P_1\supset
P_2\supset\cdots$, $\bigcap_{i\in\mathbb{Z}_+}P_i=(x,t)$, and
$f_{\widetilde{P_i}^+(\gamma)}\to f(x,t)$ as $i\to\infty$ (which follows from
the parabolic Lebesgue differentiation theorem; see, for instance, \cite[Lemma
2.3]{kmy(ma-2023)}).
\end{enumerate}

\emph{Step 2.} In this step, we show the following geometric observation.
Let $\mathcal{A}\subset\mathscr{D}(R^-(\alpha))$ satisfy that, for any
$U,V\in\mathcal{A}$, $U\cap V=\emptyset$. Then there exists a subfamily
$\mathcal{A}^*$ of $\mathcal{A}$ such that $\widetilde{U^*}^+(\gamma)\cap
\widetilde{V^*}^+(\gamma)=\emptyset$ for any $U^*,V^*\in\mathcal{A}^*$ and
\begin{align}\label{20260806.1750}
\sum_{P\in\mathcal{A}}|P|\leq\frac{4-\gamma}{1-\gamma}
\sum_{P^*\in\mathcal{A}^*}\left|\widetilde{P^*}^+(\gamma)\right|.
\end{align}
Indeed, we sort the rectangles in $\mathcal{A}$ in decreasing order of spatial
edge length and we denote this rearrangement of $\mathcal{A}$ by
$\{P_i\}_{i\in\mathbb{N}}$. We construct $\mathcal{A}^*$ by selecting
rectangles inductively. For any $i\in\mathbb{N}$, if, for any
$P\in\mathcal{A}^*$, $\widetilde{P_i}^+(\gamma)\cap\widetilde{P}^+(\gamma)=
\emptyset$, then $P_i$ is selected into $\mathcal{A}^*$. Otherwise, $P_i$ is
discarded. In this way, we obtain the desired subfamily $\mathcal{A}^*$.
Note that, for any $P\in\mathcal{A}\setminus\mathcal{A}^*$, there exists
$P^*\in\mathcal{A}^*$ such that $\widetilde{P}^+(\gamma)\cap
\widetilde{P^*}^+(\gamma)\neq\emptyset$ and $l_x(P)\leq l_x(P^*)$. This,
together with (i), further implies that $\mathrm{pr}_x(\widetilde{P}^+(\gamma))
=\mathrm{pr}_x(P)\subset\mathrm{pr}_x(P^*)
=\mathrm{pr}_x(\widetilde{P^*}^+(\gamma))$. Furthermore, since
$\widetilde{P}^+(\gamma)\cap\widetilde{P^*}^+(\gamma)\neq\emptyset$,
it follows that
\begin{align*}
\inf\mathrm{pr}_t(P)+2[l_x(P)]^p
&=\sup\mathrm{pr}_t\left(\widetilde{P}^+(\gamma)\right)
>\inf\mathrm{pr}_t\left(\widetilde{P^*}^+(\gamma)\right)\\
&=\inf\mathrm{pr}_t(P^*)+(1+\gamma)\left[l_x(P^*)\right]^p
\end{align*}
and
\begin{align*}
\inf\mathrm{pr}_t(P^*)+2\left[l_x(P^*)\right]^p
\sup\mathrm{pr}_t\left(\widetilde{P^*}^+(\gamma)\right)
&>\inf\mathrm{pr}_t\left(\widetilde{P}^+(\gamma)\right)\\
&=\inf\mathrm{pr}_t(P)+(1+\gamma)[l_x(P)]^p,
\end{align*}
and hence
\begin{align*}
\inf\mathrm{pr}_t(P^*)-(1-\gamma)\left[l_x(P^*)\right]^p
<\inf\mathrm{pr}_t(P)
<\inf\mathrm{pr}_t(P^*)+2\left[l_x(P^*)\right]^p.
\end{align*}
Combining this and the fact that $l_t(P)\leq(1-\alpha)[l_x(P)]^p$ [which can
be deduced directly from (ii)], we find that
\begin{align*}
P\subset\mathrm{pr}_x(P^*)\times
\left(\inf\mathrm{pr}_t(P^*)-(1-\gamma)\left[l_x(P^*)\right]^p,
\inf\mathrm{pr}_t(P^*)+3\left[l_x(P^*)\right]^p\right),
\end{align*}
and hence $|P|\leq\frac{4-\gamma}{1-\gamma}|\widetilde{P^*}^+(\gamma)|$.
Summing over all $P\in\mathcal{A}$ associated with $P^*$, and then over all
$P^*\in\mathcal{A}^*$, we obtain \eqref{20260806.1750}.

\emph{Step 3.} In this step, we prove the following one-sided stopping time
result. There exists a positive constant $C:=C(\gamma,\alpha)\in[1,\infty)$
satisfying that, for any $P\in\mathscr{D}(R^-(\alpha))$, there exists a
subfamily $\mathcal{S}(P)$ in $\mathscr{D}(R^-(\alpha))$ such that the following
statements hold.
\begin{Plist}
\item\label{P1} $\{U\}_{U\in\mathcal{S}(P)}$ are pairwise disjoint and, for any
$U\in\mathcal{S}(P)$, $U\subset P$.

\item\label{P2} $\sum_{U\in\mathcal{S}(P)}|U|\leq\frac12|P|$.

\item\label{P3} For almost every $(x,t)\in P\setminus
\bigcup_{U\in\mathcal{S}(P)}U$, $f(x,t)\leq
f_{\widetilde{P}^+(\gamma)}+C\|f\|_{\mathrm{BMO}^+(\gamma)}$.

\item\label{P4} For any $U\in\mathcal{S}(P)$, $f_{\widetilde{U}^+(\gamma)}\leq
f_{\widetilde{P}^+(\gamma)}+[C+2^{m(n+p)}]\|f\|_{\mathrm{BMO}^+(\gamma)}$.
\end{Plist}
Indeed, let $C:=C(\gamma,\alpha):=\frac{4(4-\gamma)}{1-\alpha}\in[1,\infty)$
and $\widetilde{\mathcal{S}}(P)$ be the collection of all
$U\in\mathscr{D}(R^-(\alpha))$ such that $U\subset P$ and
\begin{align}\label{20260806.2200}
f_{\widetilde{U}^+(\gamma)}>f_{\widetilde{P}^+(\gamma)}
+C\|f\|_{\mathrm{BMO}^+(\gamma)}.
\end{align}
Moreover, let $\mathcal{S}(P)$ be the collection of
all maximal rectangles in $\widetilde{\mathcal{S}}(P)$, i.e.,
$U\in\mathcal{S}$ if and only if $U\subset P$,
$f_{\widetilde{U}^+(\gamma)}>f_{\widetilde{P}^+(\gamma)}
+C\|f\|_{\mathrm{BMO}^+(\gamma)}$, and $f_{\widetilde{\pi U}^+(\gamma)}
\leq f_{\widetilde{P}^+(\gamma)}+C\|f\|_{\mathrm{BMO}^+(\gamma)}$.
By the maximality, we find that $\{U\}_{U\in\mathcal{S}(P)}$ are pairwise
disjoint, and hence \ref{P1} holds. From \eqref{20260806.1750} with
$\mathcal{A}:=\mathcal{S}(P)$ therein, we infer that there exists a subfamily
$[\mathcal{S}(P)]^*$ of $\mathcal{S}(P)$ such that $\widetilde{U^*}^+(\gamma)
\cap\widetilde{V^*}^+(\gamma)=\emptyset$ for any
$U^*,V^*\in[\mathcal{S}(P)]^*$ and
\begin{align*}
\sum_{U\in\mathcal{S}(P)}|U|\leq\frac{4-\gamma}{1-\gamma}
\sum_{U^*\in[\mathcal{S}(P)]^*}\left|\widetilde{U^*}^+(\gamma)\right|.
\end{align*}
This, together with \ref{P1}, \eqref{20260806.2200}, the fact that
$\widetilde{U}\subset\widetilde{P}^-(\gamma)$ for any $U\in\mathcal{S}(P)$
[which follows from (iii)], and (ii), further implies that
\begin{align*}
\sum_{U\in\mathcal{S}(P)}|U|&\leq\frac{4-\gamma}{C(1-\gamma)}
\frac{1}{\|f\|_{\mathrm{BMO}^+(\gamma)}}
\sum_{U^*\in[\mathcal{S}(P)]^*}\int_{\widetilde{U^*}^+(\gamma)}
\left[f-f_{\widetilde{P}^+(\gamma)}\right]_+\\
&\leq\frac{4-\gamma}{C(1-\gamma)}
\frac{1}{\|f\|_{\mathrm{BMO}^+(\gamma)}}\int_{\widetilde{P}^-(\gamma)}
\left[f-f_{\widetilde{P}^+(\gamma)}\right]_+\\
&\leq\frac{4-\gamma}{C(1-\gamma)}\left|\widetilde{P}^-(\gamma)\right|
\leq\frac{2(4-\gamma)}{C(1-\alpha)}|P|=\frac12|P|,
\end{align*}
and hence \ref{P2} holds.

Now, we show \ref{P3} and \ref{P4}. Let $(x,t)\in P\setminus
\bigcup_{U\in\mathcal{S}(P)}$. Applying both the definition of $\mathcal{S}(P)$
and (iv), we find that there exists a sequence
$\{P_i\}_{i\in\mathbb{Z_+}}$ in $\mathscr{D}(R^-(\alpha))$ such that
$P_0\supset P_1\supset P_2\supset\cdots$,
$\bigcap_{i\in\mathbb{Z}_+}P_i=\{(x,t)\}$, and
$f(x,t)=\lim_{i\to\infty}f_{\widetilde{P_i}^+(\gamma)}
\leq f_{\widetilde{P}^+(\gamma)}+C\|f\|_{\mathrm{BMO}^+(\gamma)}$. Thus,
\ref{P3} holds. To prove \ref{P4}, we first claim that, for any
$U\in\mathscr{D}(R^-(\alpha))\setminus\{R^-(\alpha)\}$,
\begin{align}\label{20260806.2242}
f_{\widetilde{U}^+(\gamma)}-f_{\widetilde{\pi U}^+(\gamma)}\leq2^{m(n+p)}
\|f\|_{\mathrm{BMO}^+(\gamma)}.
\end{align}
Indeed, from (iii), we deduce that
\begin{align*}
f_{\widetilde{U}^+(\gamma)}-f_{\widetilde{\pi U}^+(\gamma)}
&\leq\fint_{\widetilde{U}^+(\gamma)}
\left[f-f_{\widetilde{\pi U}^+(\gamma)}\right]_+\\
&\leq\frac{|\widetilde{\pi U}^-(\gamma)|}{|\widetilde{U}^+(\gamma)|}
\fint_{\widetilde{\pi U}^-(\gamma)}
\left[f-f_{\widetilde{\pi U}^+(\gamma)}\right]_+
=2^{m(n+p)}\|f\|_{\mathrm{BMO}^+(\gamma)},
\end{align*}
and hence the above claim holds. Let $U\in\mathcal{S}(P)$. Using
\eqref{20260806.2242} and the maximality of $U$, we obtain
\begin{align*}
f_{\widetilde{U}^+(\gamma)}\leq f_{\widetilde{\pi U}^+(\gamma)}
+2^{m(n+p)}\|f\|_{\mathrm{BMO}^+(\gamma)}
\leq f_{\widetilde{P}^+(\gamma)}+\left[C+2^{m(n+p)}\right]
\|f\|_{\mathrm{BMO}^+(\gamma)}.
\end{align*}
Therefore, \ref{P4} holds.

\emph{Step 4.} In this step, we show \eqref{20260806.1546} with the
help of \ref{P1}--\ref{P4} in Step 3. Define $\mathcal{G}_0:=\{R^-(\alpha)\}$
and, for any $k\in\mathbb{N}$, define $\mathcal{G}_k:=
\bigcup_{P\in\mathcal{G}_{k-1}}\mathcal{S}(P)$. Using \ref{P1} and \ref{P2}, we
conclude that, for any $k\in\mathbb{Z}_+$,
\begin{align}\label{20260807.1611}
\sum_{P\in\mathcal{G}_k}|P|\leq\frac{1}{2^k}\left|R^-(\alpha)\right|.
\end{align}
Furthermore, from \ref{P4}, \ref{P3}, and \ref{P1}, it follows that, for any
$k\in\mathbb{Z}_+$ and $P\in\mathcal{G}_k$,
\begin{align*}
f_{\widetilde{P}^+(\gamma)}\leq
f_{R^+(\gamma)}+k\left[C+2^{m(n+p)}\right]\|f\|_{\mathrm{BMO}^+(\gamma)}
\end{align*}
and, for almost every $(x,t)\in P\setminus\bigcup_{U\in\mathcal{G}_{k+1}}U$,
\begin{align*}
f(x,t)\leq f_{\widetilde{P}^+(\gamma)}+C\|f\|_{\mathrm{BMO}^+(\gamma)}
\leq f_{R^+(\gamma)}
+(k+1)\left[C+2^{m(n+p)}\right]\|f\|_{\mathrm{BMO}^+(\gamma)}.
\end{align*}
Therefore, for any $k\in\mathbb{Z}_+$,
\begin{align*}
R^-(\alpha)\cap\left\{f>f_{R^+(\gamma)}+k\left[C+2^{m(n+p)}\right]
\|f\|_{\mathrm{BMO}^+(\gamma)}\right\}\subset\bigcup_{P\in\mathcal{G}_k}P
\end{align*}
up to a set of measure zero. This, together with \eqref{20260807.1611} and
\ref{P1}, further implies that, for any $k\in\mathbb{Z}_+$,
\begin{align}\label{20260807.1616}
\left|R^-(\alpha)\cap\left\{f-f_{R^+(\gamma)}>k\left[C+2^{m(n+p)}\right]
\|f\|_{\mathrm{BMO}^+(\gamma)}\right\}\right|
\leq\sum_{P\in\mathcal{G}_k}|P|
\leq\frac{1}{2^k}\left|R^-(\alpha)\right|.
\end{align}

Finally, choose $k_\lambda\in\mathbb{Z}_+$ such that
\begin{align*}
k_\lambda\left[C+2^{m(n+p)}\right]\|f\|_{\mathrm{BMO}^+(\gamma)}\leq\lambda
<(k_\lambda+1)\left[C+2^{m(n+p)}\right]\|f\|_{\mathrm{BMO}^+(\gamma)}.
\end{align*}
Combining this, \eqref{20260807.1616} with $k:=k_\lambda$, and
\eqref{20260807.1632}, we obtain
\begin{align*}
\left|R^-(\alpha)\cap\left\{f-f_{R^+(\gamma)}>\lambda
\right\}\right|&\leq\frac{1}{2^{k_\lambda}}\left|R^-(\alpha)\right|
\leq2e^{-\frac{\ln2}{C+2^{m(n+p)}}
\frac{\lambda}{\|f\|_{\mathrm{BMO}^+(\gamma)}}}\left|R^-(\alpha)\right|\\
&\leq2e^{-\frac{\ln2}{\frac{4(4-\gamma)}{1-\alpha}
+2^{n+p}[\frac{\alpha+3}{\alpha-\gamma}]^\frac{n+p}{p}}
\frac{\lambda}{\|f\|_{\mathrm{BMO}^+(\gamma)}}}\left|R^-(\alpha)\right|.
\end{align*}
Thus, \eqref{20260806.1546} holds with $A:=2$ and
$B:=\frac{\ln2}{\frac{4(4-\gamma)}{1-\alpha}
+2^{n+p}[\frac{\alpha+3}{\alpha-\gamma}]^\frac{n+p}{p}}$. This completes the
proof of Lemma \ref{J-N for BMO+}.
\end{proof}

From Lemma \ref{J-N for BMO+}, we can directly deduce the following
exponential integrability for functions in $\mathrm{BMO}^+(\gamma)$.

\begin{corollary}\label{J-N for BMO+ cor}
Let $0<\gamma<\alpha<1$ and $f\in\mathrm{BMO}^+(\gamma)$. Then, for any
$\delta\in(0,\frac{B}{\|f\|_{\mathrm{BMO}^+(\gamma)}})$,
\begin{align*}
\sup_{R\in\mathcal{R}}\fint_{R^-(\alpha)}e^{\delta[f-f_{R^+(\gamma)}]_+}
\leq1+\frac{A\delta\|f\|_{\mathrm{BMO}^+(\gamma)}}
{B-\delta\|f\|_{\mathrm{BMO}^+(\gamma)}},
\end{align*}
where $A$ and $B$ are the same as in \eqref{20260806.1546}.
\end{corollary}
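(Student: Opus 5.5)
The plan is to use the layer-cake formula together with the distribution estimate \eqref{20260806.1546} from Lemma \ref{J-N for BMO+}. Fix $R\in\mathcal{R}$ and write $g:=[f-f_{R^+(\gamma)}]_+$ and $\beta:=\|f\|_{\mathrm{BMO}^+(\gamma)}$. If $\beta=0$, Lemma \ref{J-N for BMO+} (or, directly, the definition) gives $g=0$ almost everywhere on $R^-(\alpha)$, so the left-hand side equals $1$ and there is nothing to prove. We may therefore assume $\beta\in(0,\infty)$ and $0<\delta<B/\beta$.

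Since $e^{\delta g}-1=\int_0^{g}\delta e^{\delta\lambda}\,d\lambda$, Tonelli's theorem yields
\begin{align*}
\int_{R^-(\alpha)}e^{\delta g}
=|R^-(\alpha)|+\int_0^\infty\delta e^{\delta\lambda}
\left|R^-(\alpha)\cap\{g>\lambda\}\right|d\lambda .
\end{align*}
For $\lambda>0$ we have $\{g>\lambda\}=\{f-f_{R^+(\gamma)}>\lambda\}$, so \eqref{20260806.1546} bounds the measure in the integrand by $Ae^{-B\lambda/\beta}|R^-(\alpha)|$.

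Dividing by $|R^-(\alpha)|$, it remains to compute
\begin{align*}
A\delta\int_0^\infty e^{-(B/\beta-\delta)\lambda}\,d\lambda
=\frac{A\delta}{B/\beta-\delta}
=\frac{A\delta\beta}{B-\delta\beta}.
\end{align*}
This integral is finite precisely because $\delta<B/\beta$. The resulting bound does not depend on $R$, so taking the supremum over $R\in\mathcal{R}$ completes the argument.

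There is no real obstacle. The only points needing care are the trivial case $\beta=0$ and noting that the layer-cake identity holds even if the integral is a priori infinite, since Tonelli's theorem applies to the nonnegative integrand.
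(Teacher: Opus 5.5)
Your proposal is correct and follows essentially the paper's argument. The paper also applies Cavalieri's principle to the level sets of $e^{\delta[f-f_{R^+(\gamma)}]_+}$ and substitutes $\lambda=e^{\nu}$; this is your identity $e^{\delta g}-1=\int_0^g\delta e^{\delta\lambda}\,d\lambda$ after the change of variables $\nu=\delta\lambda$. It then inserts the bound from Lemma \ref{J-N for BMO+} and evaluates the same exponential integral. Your separate handling of the case $\|f\|_{\mathrm{BMO}^+(\gamma)}=0$, via $R^-(\alpha)\subset R^-(\gamma)$, is a harmless detail the paper leaves implicit.
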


\begin{proof}
Let $f\in\mathrm{BMO}^+(\gamma)$, $R\in\mathcal{R}$, and $B$ be the same as in
\eqref{20260806.1546}. Fix $\delta\in(0,
\frac{B}{\|f\|_{\mathrm{BMO}^+(\gamma)}})$. By Cavalieri's
principle and Lemma \ref{J-N for BMO+}, we find that
\begin{align*}
\fint_{R^-(\alpha)}e^{\delta[f-f_{R^+(\gamma)}]_+}
&=\frac{1}{|R^-(\alpha)|}\int_0^\infty\left|R^-(\alpha)\cap
\left\{e^{\delta[f-f_{R^+(\gamma)}]_+}>\lambda\right\}\right|\,d\lambda\\
&\leq1+\frac{1}{|R^-(\alpha)|}\int_0^\infty e^\nu
\left|R^-(\alpha)\cap\left\{f-f_{R^+(\gamma)}
>\frac{\nu}{\delta}\right\}\right|\,d\nu\\
&\leq1+A\int_0^\infty e^{(1-\frac{B}{\delta\|f\|_{\mathrm{BMO}^+(\gamma)}})\nu}
\,d\nu\leq1+\frac{A\delta\|f\|_{\mathrm{BMO}^+(\gamma)}}
{B-\delta\|f\|_{\mathrm{BMO}^+(\gamma)}}.
\end{align*}
This completes the proof of Corollary \ref{J-N for BMO+ cor}.
\end{proof}

To prove Theorem \ref{question 4.5}, we need to invoke Theorem
\ref{question 4.6}. To this end, we first establish the following
time lag independence lemma for $RH_q^+$.

\begin{lemma}\label{Kim lem 3}
Let $q\in(1,\infty]$ and $0\leq\gamma<\alpha<1$, and let $w$ be a weight. If
there exists a positive constant $C$ such that, for any $R\in\mathcal{R}$,
\begin{align}\label{20260807.1928}
\left[\fint_{R^-(\alpha)}w^q\right]^\frac1q\leq C\fint_{R^+(\gamma)}w,
\end{align}
with the usual modification made when $q=\infty$,
then $w\in RH_q^+$ and $[w]_{RH_q^+}$ depends only on $n$, $p$, $q$, $\gamma$,
$\alpha$, and $C$.
\end{lemma}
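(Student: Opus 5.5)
The plan is to verify the reverse H\"older inequality with some fixed intermediate lag, say $\beta:=\frac12$, and then pass to $RH_q^+=RH_q^+(0)$ using the equivalence $RH_q^+(\beta)\Leftrightarrow RH_q^+$ from Lemma \ref{Kim lemma 1} (that is, \cite[Lemma 2.5]{km(jam-2025)}), keeping track of constants. I treat $q<\infty$; the case $q=\infty$ is identical with $\mathop\mathrm{ess\,sup}$ in place of the $L^q$ averages.

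\emph{Step 1: one-step forward inequality.} The hypothesis \eqref{20260807.1928} controls the $L^q$ average of $w$ over a rectangle of \emph{$\alpha$-shape}, namely $P=R'^-(\alpha)$ with temporal length $(1-\alpha)[l(R')]^p$. It bounds this by the average of $w$ over $R'^+(\gamma)$. That set has the same spatial cube, lies a time gap $(\alpha+\gamma)[l(R')]^p$ above $P$, and is longer in time, with length $(1-\gamma)[l(R')]^p$. I cover $R'^+(\gamma)$ by at most $\lceil\frac{1-\gamma}{1-\alpha}\rceil+1$ rectangles of $\alpha$-shape with the same spatial cube, with bounded overlap, and apply H\"older's inequality ($\fint w\le(\fint w^q)^{1/q}$). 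This yields
\[
\left[\fint_P w^q\right]^{\frac1q}\lesssim\sum_{P'}\left[\fint_{P'}w^q\right]^{\frac1q},
\]
where the sum runs over boundedly many $\alpha$-shaped rectangles $P'$. Each $P'$ has the same spatial cube as $P$ and is translated forward in time by an amount between $(\alpha+\gamma)l^p$ and $2l^p$. The constants depend only on $\gamma,\alpha,q,C$.

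\emph{Step 2: covering and chaining.} Fix $R=R(x,t,L)$ and a large $N$ depending on $n,p,\gamma,\alpha$, and set $l:=L/N$. I cover $R^-(\beta)$ by $\alpha$-shaped rectangles $P_{j,k}$ of spatial side $l$, in the same way as in the proof of Lemma \ref{Kim lemma 1}: $N^n$ subcubes times about $N^p/(1-\alpha)$ time intervals, with overlap at most $2$. I then iterate Step 1 from each $P_{j,k}$. The spatial cube never changes, so it stays inside $Q(x,L)$. Every chain advances in time by at least $(\alpha+\gamma)l^p$ per step, so after at most $K\lesssim N^p$ steps every branch has entered the time window of $R^+(\beta)$. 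The window has length $(1-\beta)L^p=(1-\beta)N^pl^p$, which is much larger than the step size, so I stop each branch at the first step at which its rectangle lies inside $R^+(\beta)$. The branches also split, but only into boundedly many pieces per step. To keep the total count of final rectangles under control, I will organize the chaining along a fixed grid of time slabs of length $\sim l^p$ rather than tracking branches individually. Concretely, I will show that the $L^q$ average over each slab-piece of $R^-(\beta)$ is controlled by a bounded sum of $L^q$ averages over slab-pieces a fixed number of slabs later. Then I iterate a bounded number of times; this number depends on $N$, which is itself fixed.

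\emph{Step 3: summation.} The final rectangles $P'$ lie in $R^+(\beta)$, have measure comparable to $l^{n+p}\sim N^{-(n+p)}|R^+(\beta)|$, and have bounded overlap within each generation. Hence
\[
\fint_{P'}w\lesssim N^{n+p}\fint_{R^+(\beta)}w.
\]
For the last step I use the original hypothesis, which gives $\fint_{R'^+(\gamma)}w$, not an $L^q$ average, so no H\"older step is needed at the end. Finally, $\fint_{R^-(\beta)}w^q\le\frac{2}{|R^-(\beta)|}\sum_{j,k}|P_{j,k}|\fint_{P_{j,k}}w^q$. Combining these with the chained bounds gives
\[
\left[\fint_{R^-(\beta)}w^q\right]^{\frac1q}\le C'\fint_{R^+(\beta)}w,
\]
with $C'$ depending only on $n,p,q,\gamma,\alpha,C$.

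The main obstacle I expect is the bookkeeping in Step 2. The branching factor compounds over the $\sim N^p$ steps, giving a constant exponential in $N^p$. This is acceptable because $N$ is fixed, but the slab-grid organization is needed so that the final rectangles tile $R^+(\beta)$ with bounded overlap and all lie inside it. If that becomes awkward, the fallback is to first upgrade the hypothesis to a lag-$\gamma'$ reverse H\"older inequality with the \emph{same} lag on both sides. This is done by a single application of the Step 1 cover, taking $R'$ with $l(R')$ slightly larger so that the regions align. Then I invoke \cite[Lemma 2.5]{km(jam-2025)} directly.
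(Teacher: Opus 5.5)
Your main argument is correct, but it is organized differently from the paper's. The paper uses \eqref{20260807.1928} in its $L^q$ form only once, at full scale. After H\"older it has $w(R^-(\alpha))\le\frac{C(1-\alpha)}{1-\gamma}w(R^+(\gamma))$ for all $R\in\mathcal{R}$. It transports this inequality over $\alpha$-shaped pieces of side $l(R)/m$ ($m$ depending only on $p$ and $\alpha$) of the thin strip $R^+(\gamma)\setminus R^+(\alpha)$ into $R^+(\alpha)$, which gives $w(R^+(\gamma))\le D\,w(R^+(\alpha))$. One more use of \eqref{20260807.1928} then gives the equal-lag inequality $[\fint_{R^-(\alpha)}w^q]^{1/q}\lesssim\fint_{R^+(\alpha)}w$, i.e.\ $w\in RH_q^+(\alpha)$, and \cite[Lemma 2.5]{km(jam-2025)} yields $RH_q^+$.

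That equal-lag upgrade is exactly what your fallback aims at, but the paper gets it by this transport. Enlarging $l(R')$ would not work: it enlarges the spatial cube, and one application of \eqref{20260807.1928} gives no control of $w$ on the part of $R'^+(\gamma)$ outside $Q(x,L)\times\mathbb{R}$.

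You instead apply \eqref{20260807.1928} at small scale to pieces of $R^-(\beta)$ and transport across the whole gap. Your intermediate $L^q$ averages are only a relay: H\"older followed by the hypothesis is the same inequality $\fint_{R'^-(\alpha)}w\le C\fint_{R'^+(\gamma)}w$ that the paper iterates. What your route buys is self-containedness. Stop a branch as soon as the forward region $R'^+(\gamma)$ of the current piece starts after $t+\beta L^p$. Its time projection then lies in $[t+\beta L^p,t+\beta L^p+3l^p)$, which is inside $R^+(\beta)$ once $N^p\ge\frac{3}{1-\beta}$. So the argument works verbatim with $\beta=0$ and gives $w\in RH_q^+$ without quoting \cite[Lemma 2.5]{km(jam-2025)} at all. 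The paper's version is shorter, because only a strip of temporal length $(\alpha-\gamma)[l(R)]^p$ has to be crossed.

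Finally, the slab grid you planned is unnecessary. Each terminal average is over a subset of $R^+(\beta)$ of measure $\frac{1-\gamma}{1-\beta}N^{-(n+p)}|R^+(\beta)|$, so it is individually at most $\frac{1-\beta}{1-\gamma}N^{n+p}\fint_{R^+(\beta)}w$. You therefore only need to count terminal terms, at most $(\lceil\frac{1-\gamma}{1-\alpha}\rceil+1)^K$ per starting piece, rather than control their overlap.
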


\begin{proof}
Let $R\in\mathcal{R}$. From H\"older's inequality and \eqref{20260807.1928},
it follows that
\begin{align*}
\fint_{R^-(\alpha)}w\leq\left[\fint_{R^-(\alpha)}w^q\right]^\frac1q
\leq C\fint_{R^+(\gamma)}w,
\end{align*}
and hence
\begin{align}\label{20260807.1929}
w\left(R^-(\alpha)\right)\leq\frac{C(1-\alpha)}{1-\gamma}
w\left(R^+(\gamma)\right).
\end{align}
We claim that there exists a positive constant $D:=D(n,p,\gamma,\alpha,C)$
such that
\begin{align}\label{20260810.2058}
w\left(R^+(\gamma)\right)\leq Dw\left(R^+(\alpha)\right).
\end{align}

Applying this claim, \eqref{20260807.1928}, and Lemma \ref{Kim lemma 1}, we
conclude that $w\in RH_q^+(\alpha)$, and hence $w\in RH_q^+$ with
$[w]_{RH_q^+}$ depending only on $n$, $p$, $q$, $\gamma$, $\alpha$, and $C$.

Now, we show the above claim. Let $m\in\mathbb{N}$ be the smallest integer such
that $\frac{2}{m^p}<1-\alpha$, i.e.,
\begin{align}\label{20260807.2026}
\left(\frac{2}{1-\alpha}\right)^\frac1p<m\leq
\left(\frac{2}{1-\alpha}\right)^\frac1p+1.
\end{align}
Divide each spatial edge of $R^+(\gamma)\setminus R^+(\alpha)$ into $m$
half-open intervals of equal length $l:=\frac{l(R)}{m}$. Then we obtain a
sequence $\{Q_i\}_{i\in\mathbb{N}\cap[1,m^n]}$ of subcubes of
$\mathrm{pr}_x(R^+(\gamma)\setminus R^+(\alpha))$.
We also cover the temporal edge of $R^+(\gamma)\setminus R^+(\alpha)$ by
$J:=\lceil\frac{m^p(\alpha-\gamma)}{1-\alpha}\rceil$ equally long half-open
intervals of length $(1-\alpha)l^p$, with bounded overlap 2.
Then we obtain a sequence $\{J_j\}_{j\in\mathbb{N}\cap[1,J]}$ of subintervals
of $\mathrm{pr}_t(R^+(\gamma)\setminus R^+(\alpha))$. For any
$i\in\mathbb{N}\cap[1,m^n]$ and $j\in\mathbb{N}\cap[1,J]$, define
$S_{i,j}:=Q_i\times J_j$. Then there exists a unique $R_{i,j}\in\mathcal{R}$
such that $S_{i,j}=R_{i,j}^-(\alpha)$. From \eqref{20260807.1929},
we deduce that
\begin{align}\label{20260807.2000}
w\left(R_{i,j}^-(\alpha)\right)\leq\frac{C(1-\alpha)}{1-\gamma}
w\left(R_{i,j}^+(\gamma)\right).
\end{align}

To proceed, if $R_{i,j}^+(\gamma)\nsubset R^+(\alpha)$, then we cover the
temporal edge of $R_{i,j}^+(\gamma)\setminus R^+(\alpha)$ by equally long
half-open intervals of length $(1-\alpha)l^p$, with bounded overlap 2.
We require at most $\lceil\frac{1-\gamma}{1-\alpha}\rceil$ intervals
and we apply \eqref{20260807.1929} to each subrectangle of
$R_{i,j}^+(\gamma)\setminus R^+(\alpha)$. We iterate this process of rectangle
decomposition and translation until we send every $R_{i,j}^+(\gamma)$ into
$R^+(\alpha)$. Note that $(1-\alpha)l^p<\frac{(1-\alpha)[l(R)]^p}{2}$, which
ensures that our movement remains within $R^+(\alpha)$.
Since the distance of each forward in time shift is at least
$(1+\gamma)l^p$, the number of iterations is at most
$\lceil\frac{m^p(\alpha-\gamma)}{1+\gamma}\rceil+1$.
By using \eqref{20260807.2000} repeatedly, we obtain, for any
$i\in\mathbb{N}\cap[1,m^n]$ and $j\in\mathbb{N}\cap[1,J]$,
\begin{align*}
w\left(R_{i,j}^-(\alpha)\right)\leq
\left(\left\lceil\frac{1-\gamma}{1-\alpha}\right\rceil
\max\left\{1,\frac{C(1-\alpha)}{1-\gamma}\right\}\right)
^{\lceil\frac{m^p(\alpha-\gamma)}{1+\gamma}\rceil+1}
w\left(Q_i\times\mathrm{pr}_t\left(R^+(\alpha)\right)\right).
\end{align*}
Summing over $j\in\mathbb{N}\cap[1,J]$ and then over $i\in\mathbb{N}\cap[1,m^n]$
and using \eqref{20260807.2026}, we conclude that there exists a positive
constant $D:=D(n,p,\gamma,\alpha,C)$ such that \eqref{20260810.2058} and
hence the above claim holds. This completes the proof of Lemma \ref{Kim lem 3}.
\end{proof}

To prove Theorem \ref{question 4.5}, we still need to recall some known
results concerning the relationship between $\mathrm{BMO}^\pm(\gamma)$,
$\mathrm{PBMO}^-(\gamma)$, $A_r^+(\gamma)$ with $r\in(1,\infty)$, and
$A_\infty^+(\gamma)$. Lemma \ref{Kim lema 4}(i) is precisely
\cite[Lemma 7.4]{ks(apde-2016)}, Lemma \ref{Kim lema 4}(ii) is exactly
\cite[Proposition 4.4]{ks(na-2016)} (see also \cite[Corollary
4.17]{kyyz(cvpde-2025)}), and Lemma \ref{Kim lema 4}(iii) is precisely
\cite[Proposition 4.1]{ks(na-2016)}.

\begin{lemma}\label{Kim lema 4}
Let $r\in(1,\infty)$ and $\gamma\in(0,1)$. Then the following assertions hold.
\begin{enumerate}
\item[\rm(i)] $\mathrm{PBMO}^-(\gamma)=\{\lambda\ln w:
\lambda\in(0,\infty)\mbox{\ \ and\ \ }w\in A_r^+(\gamma)\}$. More precisely,
for any $f\in\mathrm{PBMO}^-(\gamma)$, there exists a positive constant
$\varepsilon:=\varepsilon(n,p,\gamma,r,\|f\|_{\mathrm{PBMO}^-(\gamma)})
\in(0,\infty)$ such that $e^{\varepsilon f}\in A_r^+(\gamma)$.
Conversely, for any $w\in A_r^+(\gamma)$, $\ln w\in\mathrm{PBMO}^-(\gamma)$
and there exists a positive constant $D:=D(n,p,r,\gamma,[w]_{A_r^+(\gamma)})$
such that $\|\ln w\|_{\mathrm{PBMO}^-(\gamma)}\leq D$.

\item[\rm(ii)] $\mathrm{PBMO}^-(\gamma)=\mathrm{BMO}^+(\gamma)\cap
[-\mathrm{BMO}^-(\gamma)]$. Moreover, for any $f\in L_\mathrm{loc}^1$,
\begin{align*}
\|f\|_{\mathrm{PBMO}^-(\gamma)}\sim\|f\|_{\mathrm{BMO}^+(\gamma)}+
\|-f\|_{\mathrm{BMO}^-(\gamma)},
\end{align*}
where the positive equivalence constants are independent of $f$.

\item[\rm(iii)] For any weight $w$, $w\in A_r^+(\gamma)$ if and only if
$w\in A_\infty^+(\gamma)$ and $w^{1-r'}\in A_\infty^-(\gamma)$. Moreover,
$[w]_{A_r^+(\gamma)}$, $[w]_{A_\infty^+(\gamma)}$, and
$[w^{1-r'}]_{A_\infty^-(\gamma)}$ only depend on each other,
$n$, $p$, $r$, and $\gamma$.
\end{enumerate}
\end{lemma}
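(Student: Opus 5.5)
Since all three assertions are already available in the literature, the plan is to verify that the definitions used here agree with those of the cited sources and then invoke them: (i) is \cite[Lemma 7.4]{ks(apde-2016)}, (ii) is \cite[Proposition 4.4]{ks(na-2016)} (see also \cite[Corollary 4.17]{kyyz(cvpde-2025)}), and (iii) is \cite[Proposition 4.1]{ks(na-2016)}. For the reader's orientation, I would still indicate the mechanism behind each part, in the order (ii), (iii), (i), because (i) is the only genuinely deep step.

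For (ii), the inclusion $\mathrm{PBMO}^-(\gamma)\subset\mathrm{BMO}^+(\gamma)\cap[-\mathrm{BMO}^-(\gamma)]$ is elementary. Fix $R$ and a near-optimal constant $c$. By the triangle inequality,
$$[f-f_{R^+(\gamma)}]_+\le(f-c)_++(c-f_{R^+(\gamma)})_+,$$
and $(c-f_{R^+(\gamma)})_+\le\fint_{R^+(\gamma)}(f-c)_-$. This bounds $\|f\|_{\mathrm{BMO}^+(\gamma)}$, and the symmetric argument bounds $\|-f\|_{\mathrm{BMO}^-(\gamma)}$. For the reverse inclusion I would choose $c$ to be a suitable intermediate value between $f_{R^-(\gamma)}$ and $f_{R^+(\gamma)}$. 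The delicate point is that a sign-controlled comparison of the two averages is needed, and this is exactly what the time-lag geometry of \cite{ks(na-2016)} supplies.

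For (iii), I would apply Jensen's inequality to pass between the $A_r^+(\gamma)$ product and the products defining $A_\infty^+(\gamma)$ for $w$ and $A_\infty^-(\gamma)$ for $w^{1-r'}$. The implication from $A_r^+(\gamma)$ to the two $A_\infty$ conditions is immediate, since $\exp\fint\ln$ is dominated by power means. The converse is the standard duality argument, again following \cite[Proposition 4.1]{ks(na-2016)}.

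For (i), the converse direction is easy: if $w\in A_r^+(\gamma)$, take $c=\ln$ of a suitable mean of $w$ and use $\ln$-Jensen to bound both one-sided oscillations of $\ln w$ by $\ln[w]_{A_r^+(\gamma)}$. The forward direction is the main obstacle. It requires a John--Nirenberg inequality for $\mathrm{PBMO}^-(\gamma)$, as in \cite{kmy(ma-2023)}, in order to obtain exponential integrability of $[f-c]_+$ on $R^-(\gamma)$ and of $[f-c]_-$ on $R^+(\gamma)$. Choosing $\varepsilon$ small relative to the John--Nirenberg constants then makes both factors in the $A_r^+(\gamma)$ product uniformly bounded. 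Rather than reprove this, I would cite \cite[Lemma 7.4]{ks(apde-2016)}.
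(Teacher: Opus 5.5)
Your plan coincides with the paper's treatment. The paper gives no argument of its own: it cites \cite[Lemma 7.4]{ks(apde-2016)} for (i), \cite[Proposition 4.4]{ks(na-2016)} (see also \cite[Corollary 4.17]{kyyz(cvpde-2025)}) for (ii), and \cite[Proposition 4.1]{ks(na-2016)} for (iii), exactly as you do.

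One caution about your orientation remarks: the converse in (iii) is not the classical duality argument. Multiplying the $A_\infty^+(\gamma)$ bound for $w$ by the $A_\infty^-(\gamma)$ bound for $w^{1-r'}$ leaves the factor $\exp\{\fint_{R^+(\gamma)}\ln w-\fint_{R^-(\gamma)}\ln w\}$, which is unbounded for weights increasing in time such as $w(x,t)=e^{Mt}$. That step therefore genuinely rests on the cited argument rather than on the Euclidean one.
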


Now, we are ready to show Theorem \ref{question 4.5}.

\begin{proof}[Proof of Theorem \ref{question 4.5}]
We first prove (i). Let $\alpha:=\frac{\gamma+1}{2}$, $A$ and $B$ be the same
as in \eqref{20260806.1546}, and $f\in\mathrm{BMO}^+(\gamma)$. By Corollary
\ref{J-N for BMO+ cor} and Jensen's inequality, we find that,
for any $R\in\mathcal{R}$,
\begin{align*}
\left[\fint_{R^-(\alpha)}e^{\frac{Bf}{2\|f\|_{\mathrm{BMO}^+(\gamma)}}}\right]
^\frac12&\leq e^{\frac{Bf_{R^+(\gamma)}}{4\|f\|_{\mathrm{BMO}^+(\gamma)}}}
\left\{\fint_{R^-(\alpha)}e^{\frac{B[f-f_{R^+(\gamma)}]_+}
{2\|f\|_{\mathrm{BMO}^+(\gamma)}}}\right\}^\frac12\\
&\leq(1+A)^\frac12e^{\frac{Bf_{R^+(\gamma)}}{4\|f\|_{\mathrm{BMO}^+(\gamma)}}}
\leq(1+A)^\frac12\fint_{R^+(\gamma)}
e^{\frac{Bf}{4\|f\|_{\mathrm{BMO}^+(\gamma)}}},
\end{align*}
which, together with Lemma \ref{Kim lem 3}, further implies that there exists
a positive constant $C_1:=C_1(n,p,\gamma)$ such that
$e^{\frac{Bf}{4\|f\|_{\mathrm{BMO}^+(\gamma)}}}\in RH_2^+$ and
$[e^{\frac{Bf}{4\|f\|_{\mathrm{BMO}^+(\gamma)}}}]_{RH_2^+}\leq C_1$.
Combining this and Theorem \ref{question 4.6} with its proof, we conclude that
there exist $r:=r(n,p,\gamma)$ and $C_2:=C_2(n,p,\gamma)$ satisfying
$e^{\frac{Bf}{4\|f\|_{\mathrm{BMO}^+(\gamma)}}}\in A_r^+(\gamma)$ and
$[e^{\frac{Bf}{4\|f\|_{\mathrm{BMO}^+(\gamma)}}}]_{A_r^+(\gamma)}\leq C_2$.
This, together with (i) and (ii) of Lemma \ref{Kim lema 4}, further implies that
$f\in\mathrm{PBMO}^-(\gamma)\subset[-\mathrm{BMO}^-(\gamma)]$ with
$\|f\|_{\mathrm{PBMO}^-(\gamma)}\lesssim\|f\|_{\mathrm{BMO}^+(\gamma)}$, and
hence $\mathrm{BMO}^+(\gamma)\subset[-\mathrm{BMO}^-(\gamma)]$. By reversing
the temporal axis, we obtain the reverse inclusion,
thereby completing the proof of (i).

Now, we show (ii). From Jensen's inequality, it follows that
$\bigcup_{q\in[1,\infty)}A_q^+(\gamma)\subset A_\infty^+(\gamma)$. We then
prove the converse inclusion. Let $w\in A_\infty^+(\gamma)$ and $f:=\ln w$.
Then, for any $R\in\mathcal{R}$,
\begin{align*}
\fint_{R^-(\gamma)}\left[f-f_{R^+(\gamma)}\right]_+
&\leq\fint_{R^-(\gamma)}e^{f-f_{R^+(\gamma)}}
=\fint_{R^-(\gamma)}w\exp\left\{\fint_{R^+(\gamma)}\ln\frac1w\right\}
\leq[w]_{A_\infty^+(\gamma)}.
\end{align*}
Thus, $f\in\mathrm{BMO}^+(\gamma)$. From this and (i), we deduce that
$-f\in\mathrm{BMO}^-(\gamma)$. By this and Lemma \ref{Kim lema 4}(i), we
conclude that there exists $\varepsilon:=\varepsilon(n,p,\gamma,
[w]_{A_\infty^+(\gamma)})\in(0,\infty)$ such that
\begin{align*}
e^{-\varepsilon f}=w^{-\varepsilon}\in A_2^-(\gamma)\subset A_\infty^-(\gamma).
\end{align*}
Let $q:=1+\frac{1}{\varepsilon}$. Then $-\varepsilon=1-q'$, and hence
$w^{1-q'}\in A_\infty^-(\gamma)$, which, together with the assumption that
$w\in A_\infty^+(\gamma)$ and Lemma \ref{Kim lema 4}(iii), further implies that
$w\in A_q^+(\gamma)$. This completes the
proof of (ii) and Theorem \ref{question 4.5}.
\end{proof}

Using Theorem \ref{question 4.5} and the parabolic John--Nirenberg inequality
for functions in $\mathrm{PBMO}^-(\gamma)$ established in \cite[Theorem
4.1]{kmy(ma-2023)}, we can improve the two time lags in Lemma
\ref{J-N for BMO+} into the same one, which in turn yields the following new
characterizations of $\mathrm{BMO}^+(\gamma)$.

\begin{corollary}\label{J-N for BMO+ 2}
Let $\gamma\in(0,1)$. Then the following statements are mutually equivalent.
\begin{enumerate}
\item[\rm(i)] $f\in\mathrm{BMO}^+(\gamma)$.

\item[\rm(ii)] $f\in L_\mathrm{loc}^1$ and there exist positive constants $A$
and $B$ such that, for any $R\in\mathcal{R}$ and $\lambda\in(0,\infty)$,
\begin{align}\label{2026080.2128}
\left|R^-(\gamma)\cap\left\{f-f_{R^+(\gamma)}>\lambda\right\}\right|
\leq Ae^{-B\lambda}\left|R^-(\gamma)\right|.
\end{align}

\item[\rm(iii)] $f\in L_\mathrm{loc}^1$ and there exist positive constants
$\delta$ and $D$ such that
\begin{align*}
\sup_{R\in\mathcal{R}}\fint_{R^-(\gamma)}e^{\delta[f-f_{R^+(\gamma)}]_+}\leq D.
\end{align*}
\end{enumerate}
\end{corollary}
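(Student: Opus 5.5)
We prove the cycle (i)$\Rightarrow$(ii)$\Rightarrow$(iii)$\Rightarrow$(i).

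\emph{(i)$\Rightarrow$(ii).} Let $f\in\mathrm{BMO}^+(\gamma)$. By Theorem \ref{question 4.5}(i), $f\in\mathrm{PBMO}^-(\gamma)$ with $\|f\|_{\mathrm{PBMO}^-(\gamma)}\lesssim\|f\|_{\mathrm{BMO}^+(\gamma)}$. We then invoke the parabolic John--Nirenberg inequality for $\mathrm{PBMO}^-(\gamma)$ from \cite[Theorem 4.1]{kmy(ma-2023)}. In one of its forms it states that, for any $R\in\mathcal{R}$, there is a constant $c_R$ such that
\begin{align*}
\left|R^-(\gamma)\cap\{f-c_R>\lambda\}\right|\leq Ae^{-B\lambda/\|f\|_{\mathrm{PBMO}^-(\gamma)}}\left|R^-(\gamma)\right|
\end{align*}
and, symmetrically, $|R^+(\gamma)\cap\{c_R-f>\lambda\}|$ decays exponentially. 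The issue is to replace $c_R$ by $f_{R^+(\gamma)}$. Integrating the second estimate gives $\fint_{R^+(\gamma)}(c_R-f)_+\lesssim\|f\|_{\mathrm{PBMO}^-(\gamma)}$, and hence
\begin{align*}
c_R-f_{R^+(\gamma)}\leq\fint_{R^+(\gamma)}(c_R-f)_+\lesssim\|f\|_{\mathrm{PBMO}^-(\gamma)}.
\end{align*}
Therefore $\{f-f_{R^+(\gamma)}>\lambda\}\subset\{f-c_R>\lambda-C\|f\|\}$. This yields (ii) after enlarging $A$ by the factor $e^{BC}$; for small $\lambda$ the bound is trivial once $A\geq1$.

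If \cite[Theorem 4.1]{kmy(ma-2023)} is instead stated with different lags on the two sides, as in Lemma \ref{J-N for BMO+}, we use the lag-independence of $\mathrm{PBMO}^-$ from that paper, or else cover $R^-(\gamma)$ by finitely many small rectangles $R_i^-(\alpha)$ whose $R_i^+(\gamma)$ lie in $R^+(\gamma)$. This last step is the main point requiring care: one must compare each $f_{R_i^+(\gamma)}$ with $f_{R^+(\gamma)}$, and we control the difference via PBMO chaining.

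\emph{(ii)$\Rightarrow$(iii).} Take $\delta:=B/2$ and repeat the Cavalieri computation of Corollary \ref{J-N for BMO+ cor}:
\begin{align*}
\fint_{R^-(\gamma)}e^{\delta[f-f_{R^+(\gamma)}]_+}\leq1+A\int_0^\infty e^{\nu(1-B/\delta)}\,d\nu=1+A=:D.
\end{align*}

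\emph{(iii)$\Rightarrow$(i).} Use $x\leq e^{\delta x}/\delta$ for $x\geq0$:
\begin{align*}
\fint_{R^-(\gamma)}[f-f_{R^+(\gamma)}]_+\leq\frac1\delta\fint_{R^-(\gamma)}e^{\delta[f-f_{R^+(\gamma)}]_+}\leq\frac D\delta.
\end{align*}
Taking the supremum over $R\in\mathcal{R}$ gives $\|f\|_{\mathrm{BMO}^+(\gamma)}\leq D/\delta$.
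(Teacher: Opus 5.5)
Your proof is correct and follows essentially the same route as the paper. For (i)$\Rightarrow$(ii) you combine Theorem \ref{question 4.5}(i) with the same-lag John--Nirenberg inequality for $\mathrm{PBMO}^-(\gamma)$ from \cite[Theorem 4.1]{kmy(ma-2023)}, integrate the $R^+(\gamma)$ estimate to bound $c_R-f_{R^+(\gamma)}$, shift the level, and use the trivial bound for small $\lambda$; your (ii)$\Rightarrow$(iii) and (iii)$\Rightarrow$(i) are exactly the Cavalieri and elementary arguments the paper omits. Since the cited inequality already has the same lag $\gamma$ on both sides, your contingency paragraph about differing lags and covering/chaining is not needed.
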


\begin{proof}
The proof of the implication (ii)$\implies$(iii) is similar to the proof of
Corollary \ref{J-N for BMO+ cor}, and the proof of the implication
(iii)$\implies$(i) is trivial; we omit the details. We only show that
(i)$\implies$(ii). Let $f\in\mathrm{BMO}^+(\gamma)$. From Theorem
\ref{question 4.5} and the parabolic John--Nirenberg inequality for functions
in $\mathrm{PBMO}^-(\gamma)$ (see, for instance, \cite[Theorem
4.1]{kmy(ma-2023)}), we deduce that there exist
positive constants $\widetilde{A}:=\widetilde{A}(n,p,\gamma)$ and
$B:=B(n,p,\gamma)$ such that, for any $R\in\mathcal{R}$, there exists
$c_R\in\mathbb{R}$ satisfying, for any $\lambda\in(0,\infty)$,
\begin{align}\label{20260810.2151}
\left|R^-(\gamma)\cap\{(f-c_R)_+>\lambda\}\right|
\leq\widetilde{A}e^{-\frac{B\lambda}
{\|f\|_{\mathrm{BMO}^+(\gamma)}}}\left|R^-(\gamma)\right|
\end{align}
and
\begin{align}\label{20260810.2152}
\left|R^+(\gamma)\cap\{(f-c_R)_->\lambda\}\right|
\leq\widetilde{A}e^{-\frac{B\lambda}
{\|f\|_{\mathrm{BMO}^+(\gamma)}}}\left|R^+(\gamma)\right|.
\end{align}
Combining \eqref{20260810.2152} and Cavalieri's principle, we obtain
\begin{align*}
c_R-f_{R^+(\gamma)}&\leq\fint_{R^+(\gamma)}(f-c_R)_-
=\frac{1}{|R^+(\gamma)|}\int_0^\infty
\left|R^+(\gamma)\cap\{(f-c_R)_->\lambda\}\right|\,d\lambda\\
&\leq\widetilde{A}\int_0^\infty e^{-\frac{B\lambda}
{\|f\|_{\mathrm{BMO}^+(\gamma)}}}
\,d\lambda=\frac{\widetilde{A}}{B}\|f\|_{\mathrm{BMO}^+(\gamma)}.
\end{align*}
This, together with \eqref{20260810.2151}, further implies that, for any
$\lambda\in(\frac{\widetilde{A}}{B}\|f\|_{\mathrm{BMO}^+(\gamma)},\infty)$,
\begin{align*}
\left|R^-(\gamma)\cap\left\{f-f_{R^+(\gamma)}>\lambda\right\}\right|
&\leq\left|R^-(\gamma)\cap\left\{f-c_R>\lambda
-\frac{\widetilde{A}}{B}\|f\|_{\mathrm{BMO}^+(\gamma)}\right\}\right|\\
&\leq\widetilde{A}e^{\widetilde{A}}e^{-\frac{B\lambda}
{\|f\|_{\mathrm{BMO}^+(\gamma)}}}\left|R^-(\gamma)\right|.
\end{align*}
On the other hand, for any
$\lambda\in(0,\frac{\widetilde{A}}{B}\|f\|_{\mathrm{BMO}^+(\gamma)}]$,
\begin{align*}
\left|R^-(\gamma)\cap\left\{f-f_{R^+(\gamma)}>\lambda\right\}\right|
&\leq\left|R^-(\gamma)\right|\leq e^{\widetilde{A}}e^{-\frac{B\lambda}
{\|f\|_{\mathrm{BMO}^+(\gamma)}}}\left|R^-(\gamma)\right|.
\end{align*}
Therefore, \eqref{2026080.2128} holds with
$A:=(1+\widetilde{A})e^{\widetilde{A}}$. This completes the proof of
the implication (i)$\implies$(ii), and hence Corollary \ref{J-N for BMO+ 2}.
\end{proof}

Furthermore, applying Theorem \ref{question 4.5} and the fact that
$\mathrm{PBMO}^+(\gamma)$ is independent of time lag (see, for instance,
\cite[Corollary 4.2]{kmy(ma-2023)} and \cite[Theorem 2.5]{kyyz(cvpde-2025)}),
we find that $\mathrm{BMO}^+(\gamma)$ is also independent of time lag;
we omit the details.

\begin{corollary}\label{BMO+ time lag}
Let $\gamma,\alpha\in(0,1)$. Then $\mathrm{BMO}^+(\gamma)$ and
$\mathrm{BMO}^+(\alpha)$ coincide. Moreover, for any $f\in L_\mathrm{loc}^1$,
$\|f\|_{\mathrm{BMO}^+(\gamma)}\sim\|f\|_{\mathrm{BMO}^+(\alpha)}$, where the
positive equivalence constants are independent of $f$.
\end{corollary}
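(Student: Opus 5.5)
The plan is to pass through $\mathrm{PBMO}^-$, where time lag independence is already known. Fix $\gamma,\alpha\in(0,1)$ and $f\in L_\mathrm{loc}^1$. By Theorem \ref{question 4.5}(i), applied once with lag $\gamma$ and once with lag $\alpha$,
\begin{align*}
\|f\|_{\mathrm{BMO}^+(\gamma)}\sim\|f\|_{\mathrm{PBMO}^-(\gamma)}
\quad\mbox{and}\quad
\|f\|_{\mathrm{BMO}^+(\alpha)}\sim\|f\|_{\mathrm{PBMO}^-(\alpha)},
\end{align*}
with constants depending only on $n$, $p$ and the respective lag. It therefore suffices to show
\begin{align*}
\|f\|_{\mathrm{PBMO}^-(\gamma)}\sim\|f\|_{\mathrm{PBMO}^-(\alpha)}.
\end{align*}

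This equivalence is the known time lag independence of the parabolic BMO space, stated in \cite[Corollary 4.2]{kmy(ma-2023)} and \cite[Theorem 2.5]{kyyz(cvpde-2025)}. Those references are phrased for $\mathrm{PBMO}^+$. We transfer to $\mathrm{PBMO}^-$ by time reversal: set $\check f(x,t):=f(x,-t)$. Reflecting $R\in\mathcal{R}$ in time swaps $R^-(\gamma)$ and $R^+(\gamma)$ up to null boundary sets, and these do not affect the integrals. Hence $\|f\|_{\mathrm{PBMO}^-(\gamma)}=\|\check f\|_{\mathrm{PBMO}^+(\gamma)}$, and the cited result applies verbatim. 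Chaining the three equivalences gives the corollary, including the equality of sets.

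If one prefers to avoid the cited $\mathrm{PBMO}^\pm$ lag independence, there is a more self-contained route for the direction where the lag is enlarged. Assume $\gamma<\alpha$ and $f\in\mathrm{BMO}^+(\gamma)$, and take the Jensen step from the proof of Theorem \ref{question 4.5}(i), which compares $R^-(\alpha)$ with $R^+(\gamma)$. The obstacle is that $\mathrm{BMO}^+(\alpha)$ requires comparing $R^-(\alpha)$ with $R^+(\alpha)\subset R^+(\gamma)$, so one must control $f_{R^+(\gamma)}-f_{R^+(\alpha)}$ from above. Two tools would do this:
\begin{enumerate}
\item[(a)] the time lag independence for $RH_q^+$ in Lemma \ref{Kim lem 3}, applied to $w:=e^{\varepsilon f}$;
\item[(b)] the exponential integrability of Corollary \ref{J-N for BMO+ cor}.
\end{enumerate}
The reverse direction (shrinking the lag) is the genuinely harder one. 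This is why the reduction to the established $\mathrm{PBMO}^-$ result is the approach I would actually take. The only point needing care there is that the equivalence constants in Theorem \ref{question 4.5}(i) depend on the lag but not on $f$.
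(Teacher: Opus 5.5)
Your main argument is correct and is the paper's own route: the paper also gets the corollary by combining Theorem \ref{question 4.5}(i) with the known time lag independence of parabolic BMO from \cite[Corollary 4.2]{kmy(ma-2023)} and \cite[Theorem 2.5]{kyyz(cvpde-2025)}, though it omits the details. Your time-reversal identity $\|f\|_{\mathrm{PBMO}^-(\gamma)}=\|\check f\|_{\mathrm{PBMO}^+(\gamma)}$ correctly supplies the omitted transfer step, and the optional self-contained sketch in (a)--(b) is not needed.
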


From Theorem \ref{question 4.5} and \cite[Theorem
3.1]{kyyz(cvpde-2025)}, we deduce that the null space of
$\mathrm{BMO}^+(\gamma)$ consists of non-decreasing functions which depend
only on the temporal variable; we omit the details.

\begin{corollary}\label{BMO+ null space}
Let $\gamma\in(0,1)$. Then $f\in\{g\in\mathrm{BMO}^+(\gamma):
\|g\|_{\mathrm{BMO}^+(\gamma)}=0\}$ if and only if there exists a
non-decreasing function $\widetilde{f}:\mathbb{R}\to\mathbb{R}$ such that, for
almost every $(x,t)\in\mathbb{R}^{n+1}$, $f(x,t)=\widetilde{f}(t)$.
\end{corollary}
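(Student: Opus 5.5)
The approach is to transfer the known null-space characterization from $\mathrm{PBMO}^-(\gamma)$ to $\mathrm{BMO}^+(\gamma)$ by means of Theorem \ref{question 4.5}(i). That theorem gives $\mathrm{BMO}^+(\gamma)=\mathrm{PBMO}^-(\gamma)$ together with the norm equivalence $\|f\|_{\mathrm{PBMO}^-(\gamma)}\sim\|f\|_{\mathrm{BMO}^+(\gamma)}$ for every $f\in L^1_{\mathrm{loc}}$. In particular, $\|f\|_{\mathrm{BMO}^+(\gamma)}=0$ if and only if $\|f\|_{\mathrm{PBMO}^-(\gamma)}=0$, because the equivalence constants are finite and positive and do not depend on $f$. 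So the null space of $\mathrm{BMO}^+(\gamma)$ coincides exactly with the null space of $\mathrm{PBMO}^-(\gamma)$.

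The second step is to invoke \cite[Theorem 3.1]{kyyz(cvpde-2025)}. It characterizes the null space of $\mathrm{PBMO}^-(\gamma)$, in the paper's orientation, as the set of functions equal almost everywhere to a non-decreasing function of $t$ alone. The main thing to check here is orientation. In the $\mathrm{PBMO}^-$ norm the positive part $(f-c)_+$ is taken on the past piece $R^-(\gamma)$ and the negative part on the future piece $R^+(\gamma)$. Zero norm therefore forces $f\le c$ a.e.\ on $R^-(\gamma)$ and $f\ge c$ a.e.\ on $R^+(\gamma)$, so $f$ increases in time, which matches ``non-decreasing'' rather than ``non-increasing.''

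For safety I would also verify the easy direction directly in $\mathrm{BMO}^+(\gamma)$, without citing anything. Suppose $f(x,t)=\widetilde f(t)$ a.e.\ with $\widetilde f$ non-decreasing. Every time in $R^-(\gamma)$ is smaller than every time in $R^+(\gamma)$, so $f\le \inf_{R^+(\gamma)}f\le f_{R^+(\gamma)}$ a.e.\ on $R^-(\gamma)$. Hence $[f-f_{R^+(\gamma)}]_+=0$ there, and the norm vanishes.

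The only genuine obstacle is the converse, the fact that zero norm forces $f$ to be independent of $x$ and monotone in $t$. Its proof is contained in the cited theorem via the $\mathrm{PBMO}^-$ reformulation, and I would not reprove it.
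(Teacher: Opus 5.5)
Your proposal is correct and takes the same route as the paper, which also proves the corollary by combining the norm equivalence $\|f\|_{\mathrm{PBMO}^-(\gamma)}\sim\|f\|_{\mathrm{BMO}^+(\gamma)}$ from Theorem \ref{question 4.5}(i) with the null-space characterization of $\mathrm{PBMO}^-(\gamma)$ in \cite[Theorem 3.1]{kyyz(cvpde-2025)}. The paper omits the details; your orientation check and your direct verification of the easy direction supply them, and both are sound.
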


Finally, since the logarithm $\ln u$ of any positive weak solution $u$ to the
equation \eqref{20260811.1443} belongs to $\mathrm{PBMO}^-(\gamma)$, it follows
that $\ln u$ satisfies the following John--Nirenberg inequality;
we omit the details.

\begin{corollary}\label{solutions J-N}
Let $u$ be a positive weak solution to \eqref{20260811.1443}. Then,
for any $\gamma\in(0,1)$, $\ln u\in\mathrm{BMO}^+(\gamma)$ and there exist
positive constants $A:=A(n,p,\gamma)$ and $B:=B(n,p,\gamma)$ such that,
for any $R\in\mathcal{R}$ and $\lambda\in(0,\infty)$,
\begin{align*}
\left|R^-(\gamma)\cap\left\{\ln u-(\ln u)_{R^+(\gamma)}>\lambda
\right\}\right|\leq Ae^{-\frac{B\lambda}{\|\ln u\|_{\mathrm{BMO}^+(\gamma)}}}
\left|R^-(\gamma)\right|.
\end{align*}
\end{corollary}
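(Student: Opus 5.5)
The idea is to read off the statement from what is already known for positive weak solutions, and then use the equivalence of spaces in Theorem \ref{question 4.5}(i) together with the new John--Nirenberg characterization in Corollary \ref{J-N for BMO+ 2}. As recalled in the introduction (see \cite{a(tams-1988), kk(ma-2007)}), for any $\gamma\in(0,1)$ and any positive weak solution $u$ to \eqref{20260811.1443}, the function $\ln u$ belongs to $\mathrm{PBMO}^-(\gamma)$. The parabolic Harnack inequality \eqref{20260811.1541} for positive solutions is in fact derived from this fact via Moser iteration.

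First, I apply Theorem \ref{question 4.5}(i): since $\mathrm{PBMO}^-(\gamma)=\mathrm{BMO}^+(\gamma)$ with equivalent norms, we get $\ln u\in\mathrm{BMO}^+(\gamma)$, with $\|\ln u\|_{\mathrm{BMO}^+(\gamma)}\lesssim\|\ln u\|_{\mathrm{PBMO}^-(\gamma)}$. If $\|\ln u\|_{\mathrm{BMO}^+(\gamma)}=0$, the right-hand side is read as $0$ for every $\lambda>0$. In that case Corollary \ref{BMO+ null space} shows that $\ln u$ is a non-decreasing function of $t$ alone, so $\ln u-(\ln u)_{R^+(\gamma)}\le0$ almost everywhere on $R^-(\gamma)$, and the inequality holds trivially. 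Hence I may assume $\|\ln u\|_{\mathrm{BMO}^+(\gamma)}\in(0,\infty)$.

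Second, I apply the implication (i)$\Rightarrow$(ii) of Corollary \ref{J-N for BMO+ 2}, keeping track of constants as in its proof. That proof combines \cite[Theorem 4.1]{kmy(ma-2023)} with the norm equivalence of Theorem \ref{question 4.5}(i). It yields \eqref{20260810.2151}--\eqref{20260810.2152} with constants $\widetilde A,B$ depending only on $n,p,\gamma$, and with the decay rate normalized by $\|f\|_{\mathrm{BMO}^+(\gamma)}$. The Cavalieri step then replaces $c_R$ by $f_{R^+(\gamma)}$ and produces $A=(1+\widetilde A)e^{\widetilde A}$, which still depends only on $n,p,\gamma$. Taking $f:=\ln u$ gives exactly the claimed estimate.

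The only point that needs care is the dependence of the constants. In Corollary \ref{J-N for BMO+ 2}(ii) they are stated abstractly, so I have to check that the proof really gives a scale-invariant bound of the form $Ae^{-B\lambda/\|f\|}$ with $A,B$ depending only on $n,p,\gamma$. This follows because the John--Nirenberg inequality for $\mathrm{PBMO}^-(\gamma)$ in \cite{kmy(ma-2023)} is homogeneous in the norm, and the equivalence constants in Theorem \ref{question 4.5}(i) depend only on $n,p,\gamma$. The latter holds because $r$ and $C_2$ in that proof depend only on $n,p,\gamma$. Both facts are independent of $u$, which completes the argument.
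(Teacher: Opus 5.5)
Your proposal is correct and follows the same route as the paper's omitted argument. You take $\ln u\in\mathrm{PBMO}^-(\gamma)$ from the literature, pass to $\mathrm{BMO}^+(\gamma)$ with comparable norms via Theorem \ref{question 4.5}(i), and run the implication (i)$\Longrightarrow$(ii) of Corollary \ref{J-N for BMO+ 2}, tracking constants so that $A,B$ depend only on $n,p,\gamma$ and the exponent is normalized by $\|\ln u\|_{\mathrm{BMO}^+(\gamma)}$.

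Your treatment of the degenerate case $\|\ln u\|_{\mathrm{BMO}^+(\gamma)}=0$ is a useful addition; the paper does not mention it, and it does occur for constant solutions. However, it follows directly from the definition of the norm, without Corollary \ref{BMO+ null space}: a zero norm forces $[\ln u-(\ln u)_{R^+(\gamma)}]_+=0$ almost everywhere on every $R^-(\gamma)$.
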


\bigskip

\noindent\textbf{Acknowledgements}\quad The authors acknowledge the use of AI
tools during the exploratory stage of this project. All mathematical arguments
and proofs in the final manuscript were checked and written by the authors.

\bigskip

\noindent Weiyi Kong, Dachun Yang and Wen Yuan

\medskip

\noindent Laboratory of Mathematics
and Complex Systems (Ministry of Education of China),
School of Mathematical Sciences, Institute for Advanced Study,
Beijing Normal University,
Beijing 100875,
The People's Republic of China

\smallskip

\noindent{\it E-mails:} \texttt{weiyikong@mail.bnu.edu.cn} (W. Kong)

\noindent\phantom{{\it E-mails:} }\texttt{dcyang@bnu.edu.cn} (D. Yang)

\noindent\phantom{{\it E-mails:} }\texttt{wenyuan@bnu.edu.cn} (W. Yuan)

\end{document}